\documentclass{article}

\usepackage{amsmath}
\usepackage{amssymb}
\usepackage{amsthm}
\usepackage{graphicx}
\usepackage{enumitem}
\usepackage{booktabs}
\usepackage{xcolor}
\usepackage{hyperref}
\definecolor{Mycolor}{RGB}{10,74,38}
\hypersetup{
  colorlinks=true,
  linkcolor=Mycolor,
  citecolor=Mycolor,
  urlcolor=Mycolor
}

\usepackage[ruled,linesnumbered,longend]{algorithm2e}
\usepackage[a4paper,left=2.8cm,right=2.8cm,top=2.5cm,bottom=2.5cm]{geometry}
\usepackage{fancyhdr}
\newtheorem{theorem}{Theorem}[section]
\newtheorem{corollary}{Corollary}[section]
\newtheorem{proposition}{Proposition}[section]

\newtheorem{lemma}{Lemma}[section]
\newtheorem{definition}{Definition}[section]

\makeatletter 
\@addtoreset{equation}{section}
\makeatother  

\newtheorem{remark}{Remark}[section]

\newcommand{\R}{\mathbb{R}}
\newcommand{\Xc}{{\boldsymbol{X}}} 
\newcommand{\dist}{\operatorname{dist}}
\newcommand{\sepa}{\operatorname{sep}}
\newcommand{\supp}{\operatorname{supp}}
\newcommand{\Mc}{\mathcal{M}}
\newcommand{\Pd}{\mathbb{P}}

\begin{document}
\title{Nearly tight framelet systems from nested Marcinkiewicz--Zygmund measures on compact Riemannian manifolds
}

\author{ Hao-Ning Wu\footnotemark[2] \qquad Xiaosheng Zhuang\footnotemark[3]
       }

\renewcommand{\thefootnote}{\fnsymbol{footnote}}
\footnotetext[2]{School of Mathematical Sciences, Xiamen University, Xiamen, Fujian, 361005, China (hnwu@connect.hku.hk)}
\footnotetext[3]{Department of Mathematics, City University of Hong Kong, Hong Kong, China (xzhuang7@cityu.edu.hk)}

\maketitle

\begin{abstract}
Tight framelet systems on manifolds provide exact energy preservation and one-pass reconstruction, but their standard semi-discretization relies on polynomial-exact quadrature rules, which are difficult to reconcile with scattered and progressively refined data.  We develop nested Marcinkiewicz--Zygmund (MZ) measures on compact Riemannian manifolds as a quantitative alternative.   Using dyadic quasi-uniform nested point sets and local partition weights, we establish a sequence of deterministic MZ inequalities for diffusion polynomial spaces.  The result has two complementary forms: a fixed-tolerance version, in which the polynomial bandwidth grows dyadically with the level, and a fixed-bandwidth refinement version, in which the MZ tolerance improves as more nested nodes are added.  When these measures are used to discretize continuous
tight framelets, the MZ tolerance transfers directly to the deviation of the frame bounds from one. Thus quadrature exactness is relaxed with a controlled loss of
tightness, and near-tightness improves along the same nested hierarchy. For the fully discrete setting, we develop filter-bank analysis and synthesis procedures, characterize one-pass and canonical reconstruction, and show that the MZ tolerance also controls the conditioning of the frame-operator equation. Fast implementation of filter-bank transforms is also presented. Numerical experiments on the sphere and the flat torus illustrate the resulting multiscale decompositions and reconstruction behavior.
\end{abstract}

\noindent\textbf{Keywords.} Marcinkiewicz--Zygmund;  nested measures; nearly tight frames; framelets; filter banks.

\noindent\textbf{AMS subject classifications.}  42C15, 42C40, 41A55, 41A10, 41A17, 58C40, 94A12.

\section{Introduction}

Multiresolution analysis was originally developed largely for data in
Euclidean spaces, such as one-dimensional signals, two-dimensional
images, and three-dimensional volumetric or video data.  Multiscale
representation systems that capture sparsity in \(\mathbb R^d\), including
wavelets, framelets, curvelets, and shearlets, have therefore been
extensively developed and widely used; see, e.g.,
\cite{MR3273289,MR2257238,MR1150048,MR1256527,MR1162107,MR1614527,MR1085487,MR1644105}.
Many modern data sets, however, are more naturally modeled
on non-Euclidean domains, such as spherical images, directional data, signals on
surfaces, point clouds, and data graphs.  In such settings, multiscale methods can separate coarse geometric features from fine local
details while respecting the geometry of the underlying domain.  This perspective underlies diffusion maps and diffusion wavelets
\cite{CoifmanLafon2006DiffusionMaps,CoifmanMaggioni2006DiffusionWavelets},
spectral graph wavelets
\cite{HammondVandergheynstGribonval2011SpectralGraphWavelets},
wavelet or frame constructions on compact manifolds and spheres
\cite{GellerMayeli2009NearlyTight,NarcowichPetrushevWard2006Localized,wang2020tight}, and so on.
In this paper, we work on compact Riemannian manifolds, which provide a
common spectral setting for these constructions.

When such manifold systems are discretized, the available samples are
not always placed on a prescribed quadrature grid.  In many practical
situations, they may be scattered or added progressively.  A multiscale
method that can keep the old samples and add new ones at finer levels is
then attractive. Previously acquired samples do not have to be
discarded, and the same hierarchy of point sets can serve both sampling
and filter-bank implementation. 

To see where the sampling problem enters, it is useful to recall the
continuous framelet system before discretization.  On a compact
manifold \(\Mc\), let \(\{u_\ell,\lambda_\ell\}_{\ell\geq0}\) be the
Laplace--Beltrami eigen-pairs.
Let $J_0\in\mathbb{N}_0$ be an integer,
and let
\(\Psi=\{\alpha;\beta^1,\ldots,\beta^r\}\) with $r\geq 1$ be a finite collection of
framelet generators.
  From \(\Psi\), one forms the continuous kernels
\[
        \varphi_{J,y}(x)
        =
        \sum_{\ell\geq0}
        \widehat\alpha\!\left(\frac{\lambda_\ell}{2^J}\right)
        \overline{u_\ell(y)}u_\ell(x),
        \qquad
        \psi^k_{j,y}(x)
        =
        \sum_{\ell\geq0}
        \widehat{\beta^k}\!\left(\frac{\lambda_\ell}{2^j}\right)
        \overline{u_\ell(y)}u_\ell(x),
\]
where \(J\geq J_0\) is the coarsest scaling level, \(j\geq J\) indexes the
finer detail levels with \(k=1,\ldots,r\), and \(y\in\Mc\) is the center
variable.  The continuous framelet system is then given by
\begin{equation}\label{equ:intro-CFS}
        \mathrm{CFS}_J(\Psi)
        =
        \{\varphi_{J,y}:y\in\Mc\}
        \cup
        \{\psi^k_{j,y}:y\in\Mc,\ j\geq J,\ k=1,\ldots,r\}.
\end{equation}
Under the usual spectral partition conditions, \(\mathrm{CFS}_J(\Psi)\) is a continuous tight frame.  Tightness is particularly desirable. The framelet coefficients preserve the \(L^2\)-energy exactly, and reconstruction uses the same system without inverting a frame operator.  To obtain a computable system, however, one must replace the continuum of centers \(y\in\Mc\) by finitely many weighted nodes at each scale.  The passage from the continuous system to a semi-discrete one therefore raises the central question of this paper: how much of the tight-frame property can be retained when the center variable is sampled on scattered and progressively refined node sets?

\subsection{Quadrature rules and nested sampling}
In the tight framelet constructions, see, e.g.,
\cite{wang2020tight,xiao2023spherical}, this
discretization of the center variable \(y\) is achieved by
polynomial-exact quadrature rules.  In a typical form, for a finite set
\(\Xc\subset\Mc\) and positive weights \(\{w_x\}_{x\in\Xc}\), one asks
\begin{equation}
\label{eq:intro-exact-quadrature}
        \int_\Mc P(y)\,d\mu(y)
        =
        \sum_{x\in\Xc_j} w_{j,x}P(x),
        \qquad P\in\Pd_{m_j},
        \qquad m_j\asymp 2^j,
\end{equation}
where \(\Pd_m\) denotes the diffusion polynomial space of bandwidth
\(m\).  
In the standard quadrature-based discretization of manifold framelets, the exactness condition \eqref{eq:intro-exact-quadrature} on the relevant product spaces transfers the continuous tight-frame identity to the semi-discrete system.  In particular, it integrates the squared coefficient functions exactly and therefore preserves the Parseval energy identity. On the sphere, for example, a canonical equal-weight realization of such exact rules is given by
spherical \(t\)-designs \cite{an2026survey,delsarte1991geometriae}, which have also
been used in spherical framelet constructions
\cite{xiao2023spherical}.

Although powerful, such exact rules are highly structured, whereas data nodes in applications are often scattered.  For scattered data, it is
natural to replace exact integration by stable discretization of
polynomial norms.  This leads to Marcinkiewicz--Zygmund 
inequalities on compact manifolds
\cite{filbir2011marcinkiewicz,mhaskar2001spherical}. An \(L^p\)
Marcinkiewicz--Zygmund inequality for \(\Pd_n\) has the form
\begin{equation}
\label{eq:intro-single-mz}
        A\|P\|_{L^p(\mu)}^p
        \leq
        \sum_{x\in\Xc} w_x |P(x)|^p
        \leq
        B\|P\|_{L^p(\mu)}^p,
        \qquad P\in\Pd_n ,
\end{equation}
with constants \(0<A\leq B<\infty\). The Marcinkiewicz--Zygmund inequality is not merely a qualitative stability substitute for
exact quadrature.  When \(A=1-\eta\) and \(B=1+\eta\), it preserves the
continuous polynomial norm up to the relative error \(\eta\).  Applied
to the center-variable framelet coefficients and combined with the
continuous tight-frame identity, this gives a semi-discrete frame whose
bounds are \(1-\eta\) and \(1+\eta\).  Thus the loss of exactness leads
to a quantitatively controlled, rather than arbitrary, loss of
tightness.
Marcinkiewicz--Zygmund inequalities have long been used in stable sampling for positive
quadrature and approximation
\cite{zbMATH05558973,zbMATH05676005,MR3746524,MR4780349,zbMATH07227728,MR2475947},
hyperinterpolation
\cite{an2025path,an2022quadrature,an2024bypassing,sloan1995polynomial,wu2026hyperinterpolation},
and numerical methods for partial differential equations and integral equations \cite{MR5025409,Wu2026MZGalerkin,wu2023breaking}.  For a broader
discussion of the role and limitations of exactness in the design of
quadrature rules, we refer the reader to \cite{MR4373871}.  Related
relaxations of spherical needlet discretizations have also been
studied in \cite{MR4783503} recently, where the analysis relies on quasi-Monte
Carlo designs \cite{MR3246811,SloanWomersley2026QMC} rather than on Marcinkiewicz--Zygmund
inequalities.

The Marcinkiewicz--Zygmund point of view provides a controlled relaxation of exact quadrature rules.  For \(p=2\), exact integration of \(|P|^2\) corresponds to the special case \(A=B=1\), whereas sufficiently fine Marcinkiewicz--Zygmund measures allow the constants to approach one while accommodating geometrically controlled scattered nodes.
Such node sets can be characterized geometrically through their mesh norm and separation in the sense that existing nodes with suitable geometric quality may be retained and assigned appropriate positive weights, rather than replaced by a specially constructed exact quadrature rule.

For multiscale constructions, there is a second requirement: the
sampling sets should be compatible across levels.  Ideally they are
\emph{nested} in the sense of
\[
        \Xc_j\subset \Xc_{j+1},\qquad j\geq0.
\]
Nestedness is the spatial analogue of dyadic refinement: a finer level
keeps the old centers and adds new ones.  This is automatic for some
classical one-dimensional rules, such as dyadically refined
Chebyshev--Lobatto points \cite{Trefethen2013ATAP}, but it is much less
automatic for discretization rules on manifolds.  For spherical
\(t\)-designs, this issue has recently been addressed in
\cite{zheng2024nested,zheng2025probability} by showing that prescribed
point sets can be enlarged to spherical designs of higher degree.  These
works show that nesting is a genuine and useful constraint in
polynomial discretization on manifolds. 
Our aim is therefore to construct sampling hierarchies that combine three properties: geometrically controlled scattered nodes, nestedness across scales, and Marcinkiewicz--Zygmund constants close to one.  The first two properties
make the systems compatible with progressively acquired data, while the
third retains the tight-frame identity up to a prescribed error.
Figure \ref{fig:nested-s2-nodes}
illustrates the kind of nested scattered sampling hierarchy.

\begin{figure}[htbp]
\centering
\begin{minipage}[t]{0.3\textwidth}
\centering
\textbf{(a)} \(\Xc_2\) with \(|\Xc_2|=75\)\\\vspace{1em}
\includegraphics[width=\textwidth]{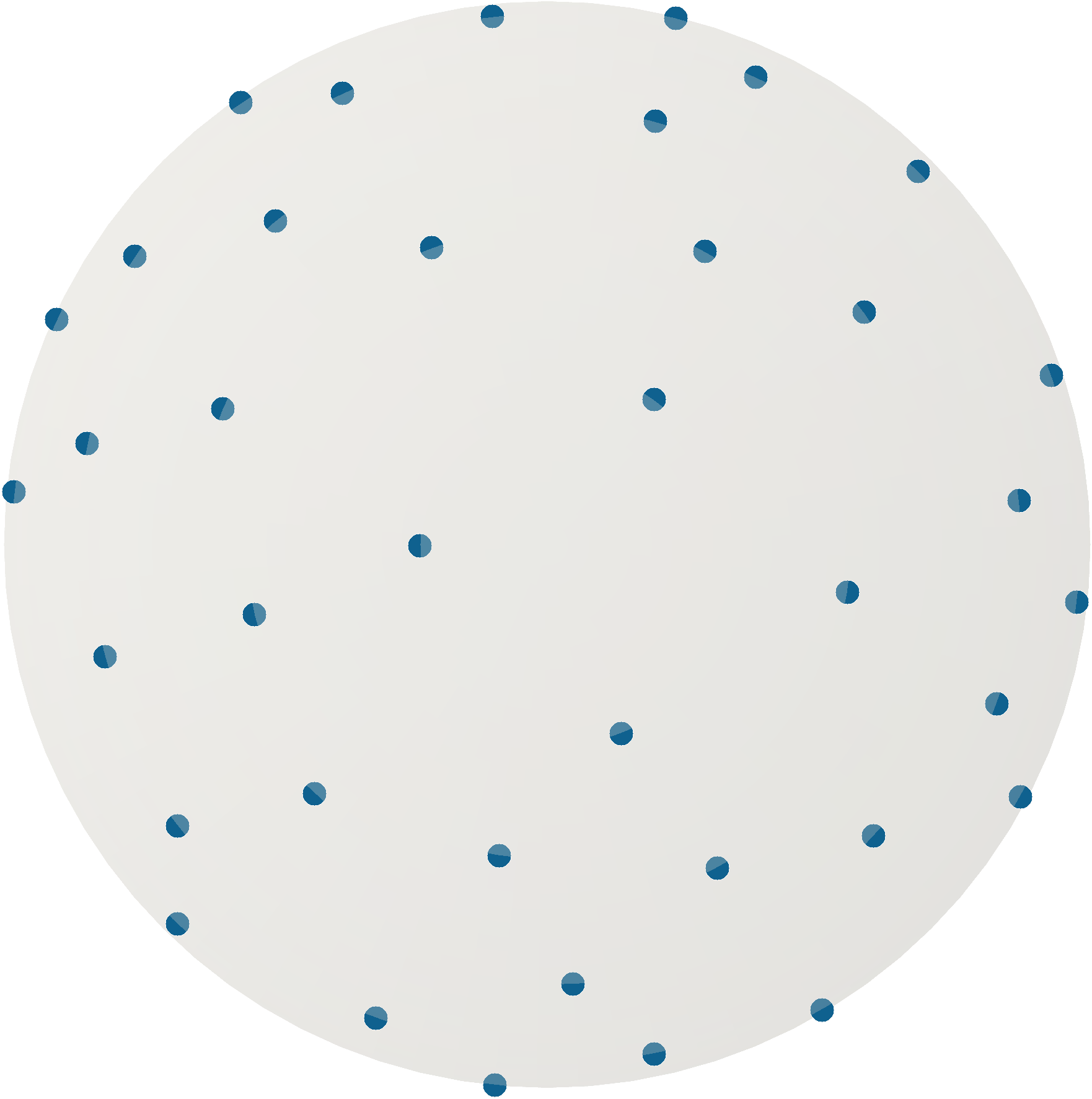}
\end{minipage}
\hfill
\begin{minipage}[t]{0.3\textwidth}
\centering
\textbf{(b)} \(\Xc_3\) with \(|\Xc_3|=243\)\\\vspace{1em} 
\includegraphics[width=\textwidth]{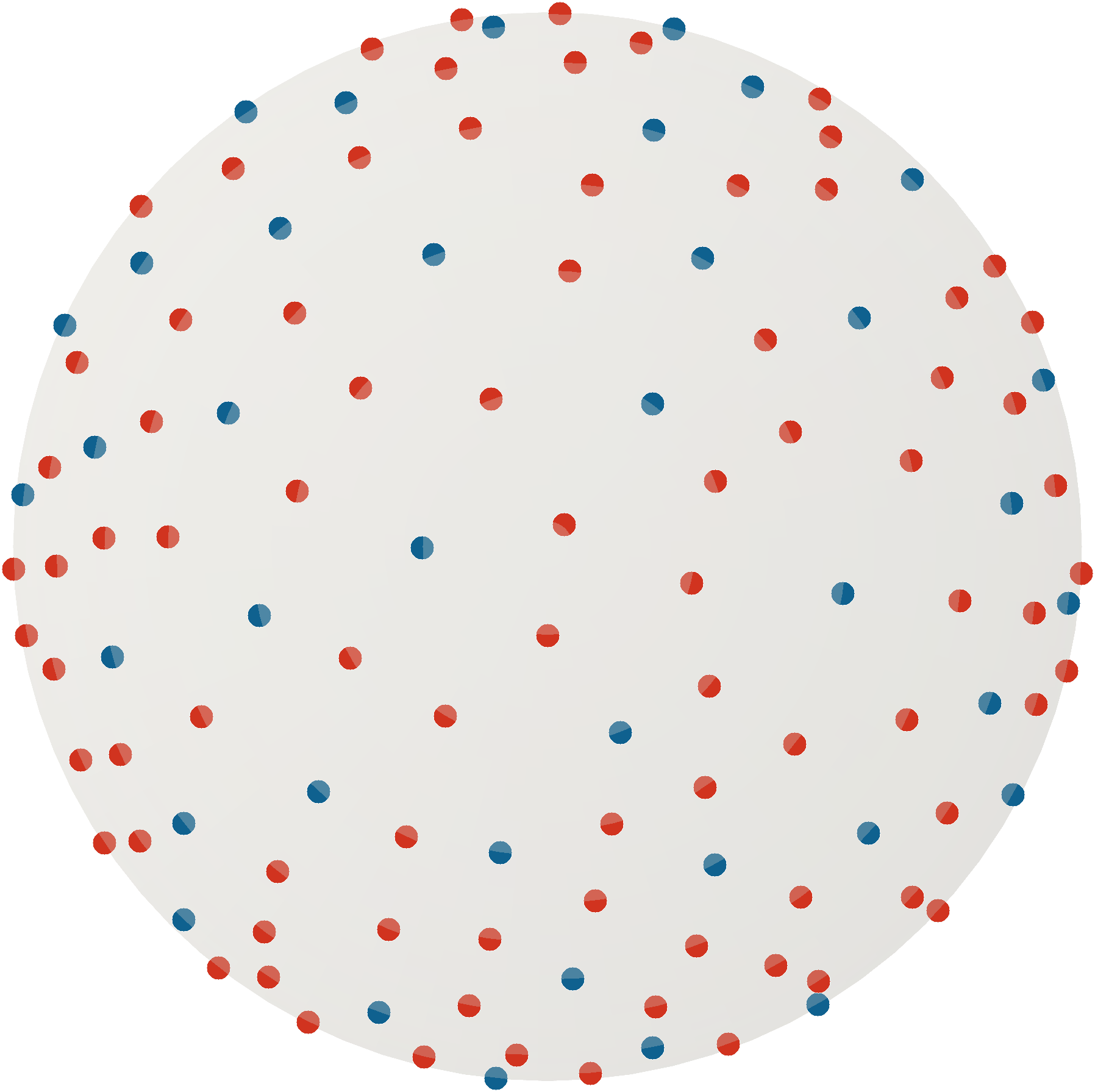}
\end{minipage}
\hfill
\begin{minipage}[t]{0.3\textwidth}
\centering
\textbf{(c)} \(\Xc_4\) with \(|\Xc_4|=867\)\\\vspace{1em}   
\includegraphics[width=\textwidth]{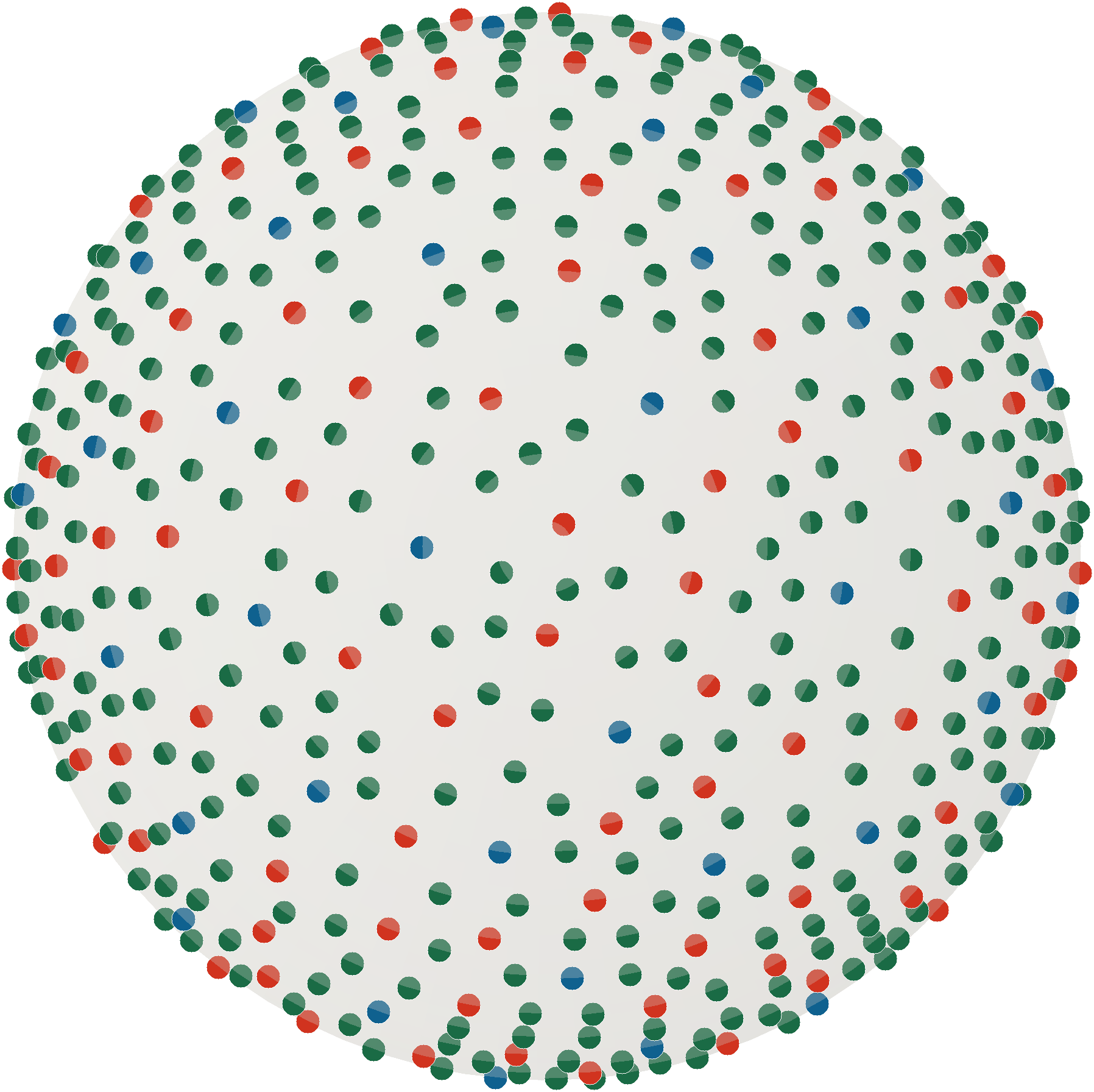}
\end{minipage}
\caption{An illustrative nested scattered sampling family on the sphere.
Blue points form \(\Xc_2\), red points are the new points in
\(\Xc_3\setminus\Xc_2\), and green points are the new points in
\(\Xc_4\setminus\Xc_3\).}
\label{fig:nested-s2-nodes}
\end{figure}

Accordingly, we introduce nested Marcinkiewicz--Zygmund measures.  In the dyadic setting, the
basic goal is to construct nested sets $\Xc_0\subset\Xc_1\subset\cdots$
and positive level-dependent weights such that
\begin{equation}
\label{eq:intro-nested-mz-scale}
(1-\eta)\|P\|_{L^p(\mu)}^p
\leq
\sum_{x\in\Xc_j}w_{j,x}|P(x)|^p
\leq
(1+\eta)\|P\|_{L^p(\mu)}^p,
\qquad P\in\Pd_{n_j},
\qquad n_j\asymp2^j,
\end{equation}
with a fixed \(\eta\) independent of \(j\).  This is the fixed-tolerance
version in the sense that the Marcinkiewicz--Zygmund constants are kept uniformly controlled while the degree
increases with the scale. For \(p=2\), this fixed Marcinkiewicz--Zygmund tolerance becomes exactly the near-tightness
parameter of the resulting framelet system.  We also prove a complementary fixed-degree
version.  If the degree \(n\) is fixed, then the
same construction gives
\begin{equation}
\label{eq:intro-nested-mz-fixed}
        (1-\eta_j)\|P\|_{L^p(\mu)}^p
        \leq
        \sum_{x\in\Xc_j} w_{j,x}|P(x)|^p
        \leq
        (1+\eta_j)\|P\|_{L^p(\mu)}^p,
        \qquad P\in\Pd_n,
\end{equation}
where $\eta_j\to0$ as the nested sets are refined. These two forms reflect two common uses of multiscale sampling: either
one lets the approximation space grow with the dyadic scale, or one oversamples
a fixed space more and more accurately.

\subsection{Nearly tight framelet systems and fully discrete transforms}
The second purpose of the paper is to use nested Marcinkiewicz--Zygmund measures satisfying
\eqref{eq:intro-nested-mz-scale} or \eqref{eq:intro-nested-mz-fixed} in
framelet constructions.  At the semi-discrete level, the measures
discretize the center variable $y$ in the continuous framelet system \eqref{equ:intro-CFS}.  With
the same notation as above, the semi-discrete system is
\[
        \mathrm{FS}_J(\Psi;\Xc,w)
        =
        \{w_{J,x}^{1/2}\varphi_{J,x}:x\in\Xc_J\}
        \cup
        \{w_{j+1,x}^{1/2}\psi^k_{j,x}:
        x\in\Xc_{j+1},\ j\geq J,\ k=1,\ldots,r\},
\]
where \(J\geq J_0\) is the coarsest scaling level, and \(j\geq J\) indexes the
finer detail levels with \(k=1,\ldots,r\).
In the quadrature-based setting, exactness is what preserves the tight
energy identity after discretization.  With Marcinkiewicz--Zygmund sampling, exact tightness is replaced by quantitatively
controlled near-tightness.  A level-independent Marcinkiewicz--Zygmund tolerance \(\eta\)
produces frame bounds \(1-\eta\) and \(1+\eta\).  Moreover, sampling the same coefficient functions on \(s\) additional nested refinement levels reduces the deviation to \(C_{\Mc}2^{-s}\).  Hence the framelet systems become asymptotically tight along the nested hierarchy.

At the fully
discrete level, the same nested measures store the scaling and detail coefficient sequences.  A filter bank
\[
        \Theta=\{a;b^1,\ldots,b^r\}
\]
with low-pass filter \(a\) and high-pass filters \(b^1,\ldots,b^r\) is
linked to the generators $\Psi$ by the refinement equations
\[
        \widehat\alpha(2\xi)=\widehat a(\xi)\widehat\alpha(\xi),
        \qquad
        \widehat{\beta^k}(2\xi)=\widehat {b^k}(\xi)\widehat\alpha(\xi).
\]
On a finite range \(J\leq j\leq K\), after the finest weighted scaling
sequence \(\mathbf v_K\) has been computed on \(\Xc_K\), the analysis
transform is obtained by repeated one-level filter-bank steps
\[
        \mathbf v_{j-1}
        =
        (\mathbf v_j *_j a^*)\downarrow_j,
        \qquad
        \mathbf w_{j-1}^k
        =
        \mathbf v_j *_j (b^k)^*,
        \quad k=1,\ldots,r .
\]
Here \(*_j\) denotes the level-\(j\) spectral convolution and
\(\downarrow_j\) is the downsampling operator, implemented by evaluating the resulting bandlimited function on the coarser weighted node set.
The corresponding direct synthesis, which we call one-pass synthesis, applies the adjoint filter-bank steps to the computed framelet coefficients without solving an additional equation. For a tight system, the frame operator is the identity, so one-pass synthesis recovers the original data exactly. For a Marcinkiewicz--Zygmund-based system, however, the frame bounds generally differ, and one-pass synthesis instead applies the finite-level frame operator. Near-tightness quantifies this departure from exact reconstruction: the frame operator remains close to the identity, while the associated frame-operator equation remains well conditioned. Canonical reconstruction is obtained by inverting this operator, which we implement numerically using conjugate gradients.

\subsection{Contributions}
The main contributions of the paper can be summarized as follows.

\begin{enumerate}[label=\textbf{(\roman*)},leftmargin=2.4em]
\item
We establish nested Marcinkiewicz--Zygmund measures on compact manifolds and give a
deterministic dyadic construction of such measures.  The construction
produces nested point sets with controlled mesh norm, separation radius,
and optimal cardinality order. Combined with the discretization theorem
of Filbir and Mhaskar \cite{filbir2011marcinkiewicz}, it yields nested
deterministic \(L^p\) Marcinkiewicz--Zygmund inequalities for \(1\leq p<\infty\), in two regimes: a fixed-tolerance regime in which
\(n_j\asymp2^j\), and a fixed-degree refinement regime in which the Marcinkiewicz--Zygmund
tolerance tends to zero as the nested sets are refined.

\item
We use nested Marcinkiewicz--Zygmund measures to construct nearly tight semi-discrete
framelet systems from scattered sampling nodes.  For any prescribed
\(0<\eta<1\), the basic construction has frame bounds \(1-\eta\) and
\(1+\eta\).  For a fixed nested hierarchy, an oversampling shift \(s\)
improves the bounds to \(1-C_{\Mc}2^{-s}\) and
\(1+C_{\Mc}2^{-s}\).  Thus near-tightness can be improved without
relocating or discarding previously used nodes.

\item
We formulate the fully discrete filter-bank transform on the same
nested measures.  The Marcinkiewicz--Zygmund bounds control both the deviation of one-pass synthesis from the identity and the condition number of the canonical frame-operator equation.  We derive reconstruction and approximation estimates, develop the corresponding conjugate-gradient reconstruction, and record fast implementation and complexity estimates.

\end{enumerate}

\subsection{Organization}
The rest of the paper is organized as follows.  Section~2 introduces the
notation and the nested Marcinkiewicz--Zygmund definitions.  Section~3 gives the
deterministic construction and proves the corresponding \(L^p\)
inequalities.  Section~4 constructs framelets from nested Marcinkiewicz--Zygmund measures and
proves the semi-discrete frame bounds.  Section~5 develops the fully
discrete filter-bank transform, coefficient-space reconstruction, and
fast implementation.  Section~6 derives the corresponding polynomial
reconstruction and approximation estimates.  Section~7 presents
numerical experiments on the sphere and the torus, and Section~8
concludes the paper. 

\section{Notation and definitions}

For nonnegative quantities $A$ and $B$, we write $A\lesssim B$ if
$A\leq CB$ for some constant $C>0$, $A\gtrsim B$ if $B\lesssim A$, and $A\asymp B$ if both
hold.  Implicit constants are independent of the varying parameters but
may depend on the fixed manifold and other fixed parameters.

Let $(\Mc,g)$ be a compact, connected, $d$-dimensional smooth Riemannian
manifold without boundary, with Riemannian metric $g$. Let $\mu$ be the normalized Riemannian
volume measure on $\Mc$ with $\mu(\Mc)=1$.  We denote by $\dist$ the
geodesic distance on $\Mc$, that is,
\[
        \dist(x,y)
        :=
        \inf_{\omega(0)=x,\ \omega(1)=y}
        \int_0^1 |\dot\omega(t)|_g\,dt,
        \qquad x,y\in\Mc,
\]
where the infimum is taken over all piecewise smooth curves
$\omega:[0,1]\to\Mc$ joining $x$ and $y$.  For $R>0$, let $B(x,R):=\{y\in\Mc:\dist(x,y)\leq R\}$ be the closed geodesic ball.

We work with diffusion polynomials
associated with the Laplace--Beltrami operator $\Delta_\Mc$ on $\Mc$.  Let
$\{u_\ell\}_{\ell\geq0}$ be an orthonormal basis of $L^2(\mu)$ consisting
of eigenfunctions of $-\Delta_\Mc$ in the sense of
\[
        -\Delta_\Mc u_\ell=\lambda_\ell^2 u_\ell .
\]
The nondecreasing spectral parameters satisfy $0=\lambda_0\leq \lambda_1\leq\lambda_2\leq\cdots$ with $\lambda_\ell\to\infty$. For $f\in L^2(\mu)$, its generalized Fourier coefficients are
\[
        \widehat f_\ell:=\langle f,u_\ell\rangle_{L^2(\mu)}
        =
        \int_\Mc f(x)\overline{u_\ell(x)}\,d\mu(x),
        \qquad \ell\geq0.
\]
Then
\[
        f=\sum_{\ell=0}^{\infty}\widehat f_\ell u_\ell
        \quad\hbox{in }L^2(\mu),
        \qquad
        \|f\|_{L^2(\mu)}^2
        =
        \sum_{\ell=0}^{\infty}|\widehat f_\ell|^2 .
\]
For $n\geq1$, the diffusion polynomial space of bandwidth $n$ is
\[
        \Pd_n:=\operatorname{span}\{u_\ell:\lambda_\ell\leq n\}.
\]
On the sphere this is equivalent, up to the usual change of bandwidth
parameter, to the spherical polynomial spaces generated by spherical
harmonics of bounded degree.  Indeed, if \(\ell\) denotes the spherical
harmonic degree, then the Laplace--Beltrami spectral parameter is
\(\sqrt{\ell(\ell+d-1)}\).  Thus statements formulated with the harmonic degree
or with \(\lambda_\ell\) differ only by harmless constants in the
bandwidth.  In this compact manifold setting the standard Weyl-type
growth gives \(\dim\Pd_n\asymp n^d\).

We begin with the single-level notion.  A Marcinkiewicz--Zygmund measure for
$\Pd_n$ is a measure whose $L^p$ norm on $\Pd_n$ is equivalent to the
ambient manifold $L^p$ norm.

\begin{definition}[Marcinkiewicz--Zygmund measure]
Let $1\leq p<\infty$, $n\in\mathbb{N}$, and let $\nu$ be a positive
finite Borel measure on $\Mc$.  We say that $\nu$ is an
\emph{$L^p$ Marcinkiewicz--Zygmund measure} for $\Pd_n$ with constants $A,B>0$
if
\[
        A\|P\|_{L^p(\mu)}^p
        \leq
        \int_{\Mc} |P(x)|^p\,d\nu(x)
        \leq
        B\|P\|_{L^p(\mu)}^p,
        \qquad P\in\Pd_n .
\]
If $\nu$ is discrete, say $\nu=\sum_{x\in\Xc}w_x\delta_x$, 
then
the condition becomes
\[
        A\|P\|_{L^p(\mu)}^p
        \leq
        \sum_{x\in\Xc} w_x |P(x)|^p
        \leq
        B\|P\|_{L^p(\mu)}^p,
        \qquad P\in\Pd_n .
\]
\end{definition}

The nested object is obtained by requiring such inequalities at a
sequence of polynomial degrees, while also requiring the supporting node
sets to be nested.  The constants may either be uniform in the level or
improve along a refinement sequence, depending on the intended
application.

\begin{definition}[Nested Marcinkiewicz--Zygmund measures]
Let $\{n_j\}_{j\geq 0}$ be an increasing sequence of positive integers.
A sequence of discrete measures $\nu_j=\sum_{x\in\Xc_j} w_{j,x}\delta_x$ with $j\geq 0$
is called \emph{nested $L^p$ Marcinkiewicz--Zygmund measures} for
$\{\Pd_{n_j}\}_{j\geq 0}$ if
\[
        \Xc_j\subset\Xc_{j+1},\qquad j\geq 0,
\]
and there exist constants $A_j,B_j>0$ such that
\[
        A_j\|P\|_{L^p(\mu)}^p
        \leq
        \sum_{x\in\Xc_j} w_{j,x}|P(x)|^p
        \leq
        B_j\|P\|_{L^p(\mu)}^p,
        \qquad P\in\Pd_{n_j},\quad j\geq 0.
\]
\end{definition}

We shall use standard geometric parameters of finite point sets on
$\Mc$.  These parameters provide a convenient deterministic route to Marcinkiewicz--Zygmund
inequalities. For a finite set $\Xc\subset\Mc$, we define its
\emph{mesh norm} and \emph{separation distance} by
\[
h_\Xc:=\sup_{y\in\Mc}\min_{x\in\Xc}\dist(y,x)\qquad\text{and}\qquad\sepa(\Xc):=\min_{x,y\in\Xc,\ x\neq y}\dist(x,y),
\]
respectively. The mesh norm measures how well $\Xc$ covers the
manifold: every point of $\Mc$ lies within distance $h_\Xc$ of some node
of $\Xc$.  The separation distance measures how far distinct nodes are
from each other, and rules out excessive clustering when it is bounded
from below.

For multiscale applications the most relevant spatial scale is dyadic.
We therefore emphasize the dyadic geometry of the nodes. These node sets are nested, cover $\Mc$ at dyadic resolution, and
remain uniformly separated at the same scale.

\begin{definition}[Dyadic nested sampling family]
A sequence of finite point sets $\{\Xc_j\}_{j\geq0}$ on $\Mc$ is called
a \emph{dyadic nested sampling family} if
\[
        \Xc_j\subset\Xc_{j+1},\qquad j\geq0,
\]
and there exist constants $c,C>0$, independent of $j$, such that
\[
        h_{\Xc_j}\leq C2^{-j},
        \qquad
        \sepa(\Xc_j)\geq c2^{-j},
        \qquad
        |\Xc_j|\asymp 2^{jd}.
\]
\end{definition}

Since $\Mc$ is compact, its injectivity radius is positive and its
curvature is bounded.  Hence there exist constants $R_\Mc>0$ and
$c_1,c_2>0$, depending only on $\Mc$, such that
\begin{equation}
\label{eq:local-volume-estimate}
        c_1 R^d\leq \mu(B(x,R))\leq c_2 R^d,
        \qquad x\in\Mc,\quad 0<R\leq R_\Mc .
\end{equation}
This standard local volume estimate on compact Riemannian manifolds can be found in, e.g., \cite[Chapter~III, Sections~3--4]{chavel2006riemannian}.

\section{Deterministic dyadic nested sampling families}\label{sec:deterministic_dyadic_nested_sampling_families}

In this section, we construct dyadic nested point sets with controlled
geometry and then establish nested
Marcinkiewicz--Zygmund measures.  

\subsection{Dyadic nested quasi-uniform sets}

Let
$0<\gamma<R_\Mc$, where $R_\Mc$ is the constant in the estimate \eqref{eq:local-volume-estimate}, and
set $n_j:=2^j$ and $R_j:=\gamma n_j^{-1}$ for $j\geq0$.

\begin{lemma}[Nested quasi-uniform sets]
\label{lem:nested-quasi-uniform}
There exists a sequence of finite subsets
$\{\Xc_j\}_{j\geq0}$ of $\Mc$ such that, for any $j\geq0$,
\[
        \Xc_j\subset\Xc_{j+1},
\]
and
\[
        h_{\Xc_j}\leq R_j,\qquad
        \sepa(\Xc_j)\geq R_j,\qquad
        |\Xc_j|\asymp R_j^{-d}\asymp \gamma^{-d}2^{jd}.
\]
The constants in $\asymp$ depend only on $\Mc$.
\end{lemma}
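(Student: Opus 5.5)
We build the sets by induction, using maximal separated sets extended greedily: at each level we keep the old centres and add new ones until the set is maximal at the finer scale. Start with $\Xc_0$, a maximal $R_0$-separated subset of $\Mc$. Here a set is $R$-separated if distinct points are at distance at least $R$; for $\Xc_0$ this means $\dist(x,y)\geq R_0$ for distinct $x,y$, and no further point of $\Mc$ can be added without violating this. Such a set exists and is finite. Indeed, by \eqref{eq:local-volume-estimate} the balls $B(x,R_0/2)$, $x\in\Xc_0$, are pairwise disjoint up to boundary, each has measure at least $c_1(R_0/2)^d$ because $R_0/2<R_\Mc$, and $\mu(\Mc)=1$. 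So any $R_0$-separated set has at most $c_1^{-1}2^dR_0^{-d}$ points, and a maximal one is obtained by adding points one at a time, a process that must stop.

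For the inductive step, suppose $\Xc_j$ is $R_j$-separated. Since $R_{j+1}=R_j/2<R_j$, the set $\Xc_j$ is also $R_{j+1}$-separated. Among all $R_{j+1}$-separated sets containing $\Xc_j$, choose a maximal one and call it $\Xc_{j+1}$. This is done by greedily adding points of $\Mc$ at distance at least $R_{j+1}$ from all current points. The process terminates by the same packing bound. Nestedness $\Xc_j\subset\Xc_{j+1}$ holds by construction, and $\sepa(\Xc_{j+1})\geq R_{j+1}$.

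The covering property comes from maximality. If some $y\in\Mc$ had $\min_{x\in\Xc_j}\dist(y,x)>R_j$, then $\Xc_j\cup\{y\}$ would still be $R_j$-separated, contradicting maximality. Hence $h_{\Xc_j}\leq R_j$. Here the supremum is attained, or at least not exceeded, by compactness and continuity of the distance function. Note that it is exactly the insistence on keeping the old points that forces us to extend a given set rather than take an arbitrary maximal set; extension is always possible because separation at a coarser scale implies separation at a finer one.

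For the cardinality, the upper bound is the packing estimate above: $|\Xc_j|\,c_1(R_j/2)^d\leq 1$. For the lower bound, the covering property gives $\Mc=\bigcup_{x\in\Xc_j}B(x,R_j)$, so $1\leq|\Xc_j|\,c_2R_j^d$. Both estimates use $R_j\leq\gamma<R_\Mc$, so that \eqref{eq:local-volume-estimate} applies. This yields $|\Xc_j|\asymp R_j^{-d}=\gamma^{-d}2^{jd}$, with constants depending only on $c_1,c_2,d$, that is, only on $\Mc$.

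The only delicate points are the following. First, the packing argument needs the balls $B(x,R_j/2)$ to have overlaps of measure zero. Since the balls are closed, two of them can meet only on a boundary set of measure zero. If one prefers to avoid this, one can use open balls, or radius $R_j/3$ at the cost of a constant. Second, termination of the greedy extension must be justified, and it follows from that same packing bound. I expect neither to be a real obstacle.
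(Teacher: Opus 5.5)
Your proof is correct and follows the paper's argument: you extend to maximal $R_{j+1}$-separated sets while keeping the old points, you get covering from maximality, and you bound the cardinality by packing and covering through the local volume estimate. The differences are minor. The paper packs with the genuinely disjoint balls $B(x,R_j/3)$, which is your suggested fallback and avoids the measure-zero boundary issue. Your explicit packing bound gives a cleaner proof that the greedy extension terminates than the paper's bare appeal to compactness.
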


\begin{proof}
We construct the sets inductively.  A point set
$\boldsymbol{Y}\subset\Mc$ is called maximal $R$-separated if it is $R$-separated and no point of $\Mc\setminus\boldsymbol{Y}$ can be added
while preserving $R$-separation.  Choose $\Xc_0$ to be a maximal
$R_0$-separated subset of $\Mc$.  Then $\sepa(\Xc_0)\geq R_0$.  Moreover,
maximality implies the covering property: if some $y\in\Mc$ satisfied
$\dist(y,x)>R_0$ for all $x\in\Xc_0$, then
$\Xc_0\cup\{y\}$ would still be $R_0$-separated, a contradiction.  Hence
$h_{\Xc_0}\leq R_0$. Now assume that $\Xc_j$ has been constructed.  Since
$R_{j+1}=R_j/2$ and $\sepa(\Xc_j)\geq R_j$, the old set $\Xc_j$ is
$R_{j+1}$-separated.  Enlarge $\Xc_j$, without removing any old point,
to a maximal $R_{j+1}$-separated set and call the result
$\Xc_{j+1}$.  Compactness ensures that this enlargement is finite.  By
construction,
\[
        \Xc_j\subset\Xc_{j+1},
        \qquad
        \sepa(\Xc_{j+1})\geq R_{j+1}.
\]
The same maximality argument gives $h_{\Xc_{j+1}}\leq R_{j+1}$.  This
proves the nestedness, mesh norm, and separation estimates.

It remains only to estimate the cardinality.  Since
$R_j=\gamma 2^{-j}\leq \gamma<R_\Mc$, the local volume estimate
\eqref{eq:local-volume-estimate} gives
$\mu(B(x,R))\asymp R^d$ for $0<R\leq R_\Mc$, with constants
depending only on $\Mc$.  Since $\Xc_j$ is $R_j$-separated, the balls
$B(x,R_j/3)$, $x\in\Xc_j$, are pairwise disjoint.  Therefore
\[
        |\Xc_j|R_j^d\lesssim
        \sum_{x\in\Xc_j}\mu(B(x,R_j/3))
        \leq 1,
\]
and so $|\Xc_j|\lesssim R_j^{-d}$.  Conversely, since
$h_{\Xc_j}\leq R_j$, the balls $B(x,R_j)$, $x\in\Xc_j$, cover $\Mc$.
Thus
\[
        1\leq \sum_{x\in\Xc_j}\mu(B(x,R_j))
        \lesssim |\Xc_j|R_j^d,
\]
which gives $|\Xc_j|\gtrsim R_j^{-d}$.  Combining the two bounds yields
$|\Xc_j|\asymp R_j^{-d}\asymp \gamma^{-d}2^{jd}$.
\end{proof}

\subsection{Dyadic partitions and weights}

Geodesic Voronoi diagrams on Riemannian manifolds are standard; see, e.g., \cite{leibon2000delaunay}.  For the weighted discretization
below, we use a measurable version obtained by assigning points on
Voronoi boundaries according to a fixed ordering of the nodes.

\begin{definition}[Partition weights]
\label{def:partition-weights}
Let $\Xc_j\subset\Mc$ be a finite point set.  A measurable partition
\(\{Y_{j,x}:x\in\Xc_j\}\) of \(\Mc\) is called a
\emph{nearest-neighbor partition associated with \(\Xc_j\)} if
\[
        \Mc=\bigcup_{x\in\Xc_j}Y_{j,x},
        \qquad x\in Y_{j,x},
\]
and
\[
        Y_{j,x}\subset
        \left\{y\in\Mc:\dist(y,x)=
        \min_{z\in\Xc_j}\dist(y,z)\right\},
        \qquad
        Y_{j,x}\subset B(x,h_{\Xc_j}),
\]
where points tied between several nearest nodes are assigned to the
first such node in a fixed ordering of \(\Xc_j\).  This produces a Borel
partition because the geodesic distance is continuous and \(\Xc_j\) is
finite.  The corresponding
\emph{partition weights} are
\[
        w_{j,x}:=\mu(Y_{j,x}),\qquad x\in\Xc_j.
\]
\end{definition}

The next lemma shows that, for the dyadic nested sampling family
constructed above, these geometrically defined weights have the expected
volume scale at every level.

\begin{lemma}[Dyadic weights]
\label{lem:dyadic-weights}
Let $\{\Xc_j\}_{j\geq0}$ be the dyadic nested sampling family
constructed in Lemma~\ref{lem:nested-quasi-uniform}, and equip each
$\Xc_j$ with the partition weights from
Definition~\ref{def:partition-weights}.  Then
\[
        w_{j,x}\asymp R_j^d\asymp \gamma^d2^{-jd},
        \qquad x\in\Xc_j,\quad j\geq0.
\]
The constants depend only on $\Mc$.
\end{lemma}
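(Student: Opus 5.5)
The upper bound is immediate from Definition~\ref{def:partition-weights}. Each cell satisfies $Y_{j,x}\subset B(x,h_{\Xc_j})$, and Lemma~\ref{lem:nested-quasi-uniform} gives $h_{\Xc_j}\leq R_j$. Since $R_j=\gamma2^{-j}\leq\gamma<R_\Mc$, the local volume estimate \eqref{eq:local-volume-estimate} applies and yields
\[
w_{j,x}=\mu(Y_{j,x})\leq\mu(B(x,R_j))\leq c_2R_j^d .
\]

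For the lower bound, I would show that each cell contains a ball of radius comparable to $R_j$ about its node. The natural candidate is $B(x,R_j/3)$, or any radius strictly below $\sepa(\Xc_j)/2$. By Lemma~\ref{lem:nested-quasi-uniform}, $\sepa(\Xc_j)\geq R_j$. Take $y$ with $\dist(y,x)\leq R_j/3$ and any other node $z\in\Xc_j$, $z\neq x$. The triangle inequality gives
\[
\dist(y,z)\geq \dist(x,z)-\dist(y,x)\geq R_j-R_j/3=2R_j/3>R_j/3\geq\dist(y,x).
\]
So $x$ is the unique nearest node to $y$. Ties only arise between several nearest nodes, so the fixed-ordering rule never reassigns $y$ elsewhere, and $y\in Y_{j,x}$. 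Hence $B(x,R_j/3)\subset Y_{j,x}$. The volume estimate, valid since $R_j/3<R_\Mc$, then gives
\[
w_{j,x}\geq c_1 3^{-d}R_j^d .
\]

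Combining the two bounds gives $w_{j,x}\asymp R_j^d=\gamma^d2^{-jd}$, with constants $c_1,c_2$ and $3^{-d}$ that depend only on $\Mc$ and $d$. The only subtle point is the tie-breaking convention in the partition, and strict separation disposes of it. Note that nestedness plays no role: the argument runs level by level.
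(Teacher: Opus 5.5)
Your proof is correct and follows essentially the same route as the paper: the upper bound comes from $Y_{j,x}\subset B(x,h_{\Xc_j})\subset B(x,R_j)$, and the lower bound from showing that $x$ is the unique nearest node on a ball of radius comparable to $R_j$. The only difference is in that ball: the paper works on the open ball of radius $R_j/2$ and then shrinks to $B(x,R_j/3)$, while you go to $B(x,R_j/3)$ directly, and you also spell out why the tie-breaking rule is irrelevant there, which the paper leaves implicit.
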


\begin{proof}
Fix $j\geq0$ and $x\in\Xc_j$.  The upper bound uses only the mesh norm.
By construction of the nearest-neighbor partition, $Y_{j,x}\subset B(x,h_{\Xc_j})$.
Lemma~\ref{lem:nested-quasi-uniform} gives $h_{\Xc_j}\leq R_j$, and hence $Y_{j,x}\subset B(x,R_j)$.
The local volume estimate \eqref{eq:local-volume-estimate} therefore gives
\[
        w_{j,x}=\mu(Y_{j,x})
        \leq \mu(B(x,R_j))
        \lesssim R_j^d.
\]
For the lower bound we use the separation.  Let $y\in\Mc$ satisfy
$\dist(y,x)<R_j/2$, and let $z\in\Xc_j$ with $z\neq x$.  Since
$\sepa(\Xc_j)\geq R_j$, we have $\dist(x,z)\geq R_j$.  By the triangle
inequality,
\[
        \dist(y,z)\geq \dist(x,z)-\dist(y,x)
        > R_j-R_j/2=R_j/2.
\]
On the other hand, $\dist(y,x)<R_j/2$.  Thus $x$ is the unique nearest
node to every point in the open ball
$B^\circ(x,R_j/2):=\{y:\dist(y,x)<R_j/2\}$.  Hence
\[
        w_{j,x}\geq \mu(B^\circ(x,R_j/2))
        \geq \mu(B(x,R_j/3))
        \gtrsim R_j^d,
\]
where the last inequality follows from
\eqref{eq:local-volume-estimate}.
Combining the two estimates gives $w_{j,x}\asymp R_j^d$.  Since
$R_j=\gamma2^{-j}$, this is the same as
$w_{j,x}\asymp \gamma^d2^{-jd}$.
\end{proof}

\subsection{Deterministic Marcinkiewicz--Zygmund inequalities}

We now recall the form of the Filbir--Mhaskar discretization theorem \cite[Theorem 5.1(b)]{filbir2011marcinkiewicz} on
compact manifolds.  The statement is specialized to the
nearest-neighbor partitions used in this paper.

\begin{proposition}[Filbir--Mhaskar \cite{filbir2011marcinkiewicz}]
\label{prop:filbir-mhaskar}
Let $1\leq p<\infty$, $0<\eta<1$, $\kappa\geq1$, and $n\geq1$.  There exists a
constant $c=c(\Mc,\kappa,p)>0$ with the following property.  Let
 $\Xc\subset\Mc$ be a finite set satisfying the
quasi-uniformity condition $h_\Xc\leq \kappa\,\sepa(\Xc)$, 
and
$\{Y_x\}_{x\in\Xc}$ a measurable partition of $\Mc$ such that
\[
        x\in Y_x,
        \qquad
        Y_x\subset B(x,h_\Xc),
        \qquad x\in\Xc.
\]
If $n h_\Xc\leq c\,\eta/p$,
then, for every real- or complex-valued $P\in\Pd_n$,
\[
        \left|
        \int_{\Mc}|P(y)|^p\,d\mu(y)
        -
        \sum_{x\in\Xc}\mu(Y_x)|P(x)|^p
        \right|
        \leq
        \eta\int_{\Mc}|P(y)|^p\,d\mu(y).
\]
\end{proposition}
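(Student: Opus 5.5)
This is a result quoted from \cite[Theorem~5.1(b)]{filbir2011marcinkiewicz}, so the plan is to reconstruct the standard argument behind it. The key idea is to compare $\int |P|^p\,d\mu$ with the sum cell by cell. Since the $Y_x$ partition $\Mc$, we can write
\[
        \int_{\Mc}|P|^p\,d\mu-\sum_{x\in\Xc}\mu(Y_x)|P(x)|^p
        =\sum_{x\in\Xc}\int_{Y_x}\bigl(|P(y)|^p-|P(x)|^p\bigr)\,d\mu(y).
\]
Each integrand is bounded by $h_\Xc\,\sup_{z\in B(x,h_\Xc)}\bigl|\nabla |P|^p(z)\bigr|$, since $\dist(y,x)\le h_\Xc$ on $Y_x$. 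Away from zeros of $P$ we have $\bigl|\nabla |P|^p\bigr|\le p|P|^{p-1}|\nabla P|$; for $p=1$ we work with the Lipschitz bound $\bigl||P(y)|-|P(x)|\bigr|\le |P(y)-P(x)|$ instead. The error is therefore bounded by
\[
        p\,h_\Xc\sum_x \mu(Y_x)\sup_{B(x,h_\Xc)}|P|^{p-1}|\nabla P|.
\]

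The next step is to turn these local suprema into integrals, using quasi-uniformity together with a local Bernstein/Nikolskii-type estimate for diffusion polynomials. This is the main obstacle. The estimate we need is that for $P\in\Pd_n$ and a ball of radius $r\lesssim 1/n$,
\[
        \sup_{B(x,r)}\bigl(|P|^{p-1}|\nabla P|\bigr)\lesssim n\,\frac{1}{\mu(B(x,r'))}\int_{B(x,r')}|P|^p\,d\mu
\]
up to a tail term, with $r'\asymp\max(r,1/n)$. In Filbir--Mhaskar this comes from reproducing $P$ by a localized kernel $\Phi_n(x,y)=\sum h(\lambda_\ell/n)u_\ell(x)u_\ell(y)$ with smooth $h$ equal to $1$ on $[0,1]$. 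The ingredients are the Gaussian heat kernel bounds on compact manifolds, the resulting off-diagonal decay $|\Phi_n(x,y)|+n^{-1}|\nabla_x\Phi_n(x,y)|\lesssim n^d(1+n\,\dist(x,y))^{-S}$, and Hölder's inequality. Rather than reproving this, I would cite it as the localized Bernstein inequality in \cite{filbir2011marcinkiewicz}, stated in the form $\sum_x\mu(Y_x)\sup_{B(x,h_\Xc)}|\nabla P|^p\lesssim n^p\|P\|_p^p$ when $nh_\Xc\le 1$.

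To finish, apply Hölder's inequality with exponents $p/(p-1)$ and $p$ to the discrete sum. This bounds it by $\bigl(\sum\mu(Y_x)\sup|P|^p\bigr)^{(p-1)/p}\bigl(\sum\mu(Y_x)\sup|\nabla P|^p\bigr)^{1/p}$. Quasi-uniformity ensures the balls $B(x,h_\Xc)$ overlap boundedly, with multiplicity depending only on $\kappa$ and $\Mc$ via \eqref{eq:local-volume-estimate}. It also ensures $\mu(Y_x)\asymp h_\Xc^d$, as in Lemma~\ref{lem:dyadic-weights}. These facts bound the first factor by $C\|P\|_p^{p-1}$ and the second by $Cn\|P\|_p$. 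The error is therefore at most $C(\Mc,\kappa,p)\,p\,n h_\Xc\|P\|_p^p$. Choosing $c=1/C$ and imposing $nh_\Xc\le c\eta/p$ gives the claimed bound $\eta\|P\|_p^p$, for complex $P$ as well, since only $|P|$ and $|\nabla P|$ entered.
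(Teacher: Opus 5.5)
The paper gives no proof of this proposition. It imports it from \cite[Theorem~5.1(b)]{filbir2011marcinkiewicz}, so there is no in-paper argument to compare against. Your reconstruction follows the standard route to that theorem and is correct in outline. You decompose cell by cell, bound $\bigl||P(y)|^p-|P(x)|^p\bigr|\le p\,\dist(x,y)\sup_{B(x,h_\Xc)}|P|^{p-1}|\nabla P|$, apply Hölder, and finish with a localized Bernstein estimate derived from the reproducing kernel $\Phi_n$. It also explains where the factor $1/p$ in the hypothesis $nh_\Xc\le c\eta/p$ comes from. The load-bearing input is $\int_\Mc\sup_{B(y,2h)}|\nabla P|^p\,d\mu(y)\lesssim n^p\|P\|_{L^p(\mu)}^p$ for $P\in\Pd_n$ and $nh\le1$. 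Your kernel sketch does prove this, though the decay of $\nabla_x\Phi_n$ needs more than the Gaussian upper bound on the heat kernel, for instance a heat-kernel gradient estimate, which is available on compact manifolds.

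Several steps in your final paragraph need repair, though none is fatal.

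(1) For an arbitrary partition with $x\in Y_x\subset B(x,h_\Xc)$, quasi-uniformity does not give $\mu(Y_x)\gtrsim h_\Xc^d$. If the mesh norm is set by a larger hole elsewhere, neighbouring cells may absorb everything near $x$ and leave $Y_x=\{x\}$. Only $\mu(Y_x)\le\mu(B(x,h_\Xc))\lesssim h_\Xc^d$ holds, and that is all you actually use.

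(2) Bounded overlap and the size of the weights do not by themselves control the first Hölder factor $\sum_x\mu(Y_x)\sup_{B(x,h_\Xc)}|P|^p$; you also need a local sup estimate. The cheapest route is to note that for $y\in Y_x$ and $z\in B(x,h)$ one has $|P(z)|\le|P(y)|+2h\sup_{B(y,2h)}|\nabla P|$. Integrating over $Y_x$ and summing then bounds that factor by $2^{p-1}\bigl(1+C(2nh)^p\bigr)\|P\|_{L^p(\mu)}^p$ via the gradient estimate.

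(3) The inequality $\sum_x\mu(Y_x)\sup_{B(x,h)}F=\sum_x\int_{Y_x}\sup_{B(x,h)}F\,d\mu(y)\le\int_\Mc\sup_{B(y,2h)}F\,d\mu(y)$ handles both factors with no overlap counting. So your argument does not actually need the hypothesis $h_\Xc\le\kappa\,\sepa(\Xc)$.

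Finally, take $c\le1$, so that $nh_\Xc\le c\eta/p$ guarantees the regime $nh_\Xc\le1$ in which you invoked the Bernstein estimate.
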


The condition in Proposition~\ref{prop:filbir-mhaskar} involves three
quantities: the polynomial degree $n$, the mesh norm $h_\Xc$, and the Marcinkiewicz--Zygmund
tolerance $\eta$.  In a nested construction, the mesh norm changes from
level to level.  This gives two natural ways to read the result.  The
first one fixes a tolerance $\eta$ and chooses the degree at level $j$
so that $n_jh_{\Xc_j}$ stays uniformly small.  The second one fixes a
degree $n$ and lets the tolerance decrease as the nested sets become
finer.

\begin{theorem}[Fixed-tolerance dyadic nested Marcinkiewicz--Zygmund measures]
\label{thm:fixed-tolerance-mz}
Let $1\leq p<\infty$ and $0<\eta<1$.  There exist
$\gamma_{\eta,p,\Mc}>0$, dyadic
nested point sets $\Xc_j\subset\Xc_{j+1}$, $n_j=2^j$, and positive
weights $\{w_{j,x}\}_{x\in\Xc_j}$ such that
\[
        |\Xc_j|\asymp \gamma_{\eta,p,\Mc}^{-d}n_j^d,\qquad
        w_{j,x}\asymp \gamma_{\eta,p,\Mc}^d n_j^{-d},
        \qquad x\in\Xc_j,\quad j\geq0,
\]
and
\[
        (1-\eta)\|P\|_{L^p(\mu)}^p
        \leq
        \sum_{x\in\Xc_j} w_{j,x}|P(x)|^p
        \leq
        (1+\eta)\|P\|_{L^p(\mu)}^p
\]
for every $P\in\Pd_{n_j}$ and every $j\geq0$.  The parameter
$\gamma_{\eta,p,\Mc}$ may be chosen so that
\begin{equation}\label{equ:gammabound}
        0<\gamma_{\eta,p,\Mc}<\min\{R_\Mc,c\,\eta/p\},
\end{equation}
where $c$ is the constant in Proposition~\ref{prop:filbir-mhaskar} for
$\kappa=1$. The implicit constants in the cardinality and
weight estimates depend only on $\Mc$; the dependence on $\eta$ and $p$
is carried by the explicit factor $\gamma_{\eta,p,\Mc}$.
\end{theorem}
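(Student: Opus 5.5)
The plan is to combine Lemma~\ref{lem:nested-quasi-uniform}, Lemma~\ref{lem:dyadic-weights}, and Proposition~\ref{prop:filbir-mhaskar} with $\kappa=1$, choosing $\gamma$ so that the degree--mesh product $n_j h_{\Xc_j}$ stays below the Filbir--Mhaskar threshold uniformly in $j$.

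\emph{Choice of the scale.} Let $c=c(\Mc,1,p)$ be the constant of Proposition~\ref{prop:filbir-mhaskar} for $\kappa=1$. Fix any $\gamma=\gamma_{\eta,p,\Mc}$ with
\[
0<\gamma<\min\{R_\Mc,\,c\,\eta/p\},
\]
which is \eqref{equ:gammabound}. With $R_j=\gamma 2^{-j}$, apply Lemma~\ref{lem:nested-quasi-uniform} with this $\gamma$, which is admissible since $\gamma<R_\Mc$. This gives nested sets $\Xc_j\subset\Xc_{j+1}$ with
\[
h_{\Xc_j}\le R_j\le \sepa(\Xc_j)
\]
and $|\Xc_j|\asymp\gamma^{-d}2^{jd}=\gamma^{-d}n_j^d$, with constants depending only on $\Mc$. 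Define $w_{j,x}=\mu(Y_{j,x})$ using the nearest-neighbor partitions of Definition~\ref{def:partition-weights}. Lemma~\ref{lem:dyadic-weights} then gives $w_{j,x}\asymp\gamma^d n_j^{-d}$, and each weight is positive because $x$ owns an open ball of radius $R_j/2$.

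\emph{Verifying the hypotheses of Proposition~\ref{prop:filbir-mhaskar} at each level.} Quasi-uniformity with $\kappa=1$ follows directly: $h_{\Xc_j}\le R_j\le\sepa(\Xc_j)$. The partition conditions $x\in Y_{j,x}$ and $Y_{j,x}\subset B(x,h_{\Xc_j})$ are built into Definition~\ref{def:partition-weights}. For the degree condition, take $n=n_j=2^j$; then
\[
n_j h_{\Xc_j}\le 2^j\gamma 2^{-j}=\gamma<c\,\eta/p.
\]
The proposition therefore yields, for every $P\in\Pd_{n_j}$,
\[
\Bigl|\|P\|_{L^p(\mu)}^p-\sum_{x\in\Xc_j}w_{j,x}|P(x)|^p\Bigr|\le\eta\|P\|_{L^p(\mu)}^p,
\]
which is the two-sided bound with constants $1\mp\eta$.

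\emph{Main point to check.} The delicate step is that $c$ must not depend on $j$ or on the set itself. This holds because $\kappa=1$ is fixed across levels and $c=c(\Mc,\kappa,p)$, so a single $\gamma$ works for all $j$. The dyadic cancellation $n_jR_j=\gamma$ is exactly what makes the tolerance level-independent. The implicit constants in the cardinality and weight estimates come only from \eqref{eq:local-volume-estimate}, so they depend only on $\Mc$, and the entire dependence on $\eta$ and $p$ is carried by the explicit factor $\gamma_{\eta,p,\Mc}$.
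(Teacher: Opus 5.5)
Your proposal is correct and matches the paper's proof essentially step for step. You fix $\gamma$ below $\min\{R_\Mc,c\,\eta/p\}$ and build the nested sets and partition weights. You then use $h_{\Xc_j}\le R_j\le\sepa(\Xc_j)$ to get $\kappa=1$ and $n_jh_{\Xc_j}\le\gamma$, apply Proposition~\ref{prop:filbir-mhaskar} at each level, and read off the cardinality and weight bounds from Lemmas~\ref{lem:nested-quasi-uniform} and~\ref{lem:dyadic-weights}.
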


\begin{proof}
Let $c$ be the constant in Proposition~\ref{prop:filbir-mhaskar} with
$\kappa=1$.  Choose $\gamma_{\eta,p,\Mc}>0$ so small that \eqref{equ:gammabound} holds.
Construct $\Xc_j$ with $R_j=\gamma_{\eta,p,\Mc} n_j^{-1}$ and define
$w_{j,x}=\mu(Y_{j,x})$ as above. For each $j$, Lemma~\ref{lem:nested-quasi-uniform} gives
$h_{\Xc_j}\leq R_j$ and $\sepa(\Xc_j)\geq R_j$.  Hence
$h_{\Xc_j}\leq\sepa(\Xc_j)$, so the quasi-uniformity condition of
Proposition~\ref{prop:filbir-mhaskar} holds with $\kappa=1$.  Moreover,
$h_{\Xc_j}\leq R_j=\gamma_{\eta,p,\Mc} n_j^{-1}$, and therefore
$n_jh_{\Xc_j}\leq\gamma_{\eta,p,\Mc}\leq c\eta/p$.
Proposition~\ref{prop:filbir-mhaskar} therefore gives, for every
$P\in\Pd_{n_j}$,
\[
        \left|
        \int_{\Mc}|P(y)|^p\,d\mu(y)
        -
        \sum_{x\in\Xc_j}\mu(Y_{j,x})|P(x)|^p
        \right|
        \leq
        \eta\int_{\Mc}|P(y)|^p\,d\mu(y).
\]
This is exactly the stated Marcinkiewicz--Zygmund inequality.  The
cardinality and weight estimates follow from
Lemmas~\ref{lem:nested-quasi-uniform} and \ref{lem:dyadic-weights},
since $R_j=\gamma_{\eta,p,\Mc} n_j^{-1}$.
\end{proof}

\begin{theorem}[Fixed-degree dyadic nested Marcinkiewicz--Zygmund measures]
\label{thm:fixed-degree-refinement}
Let $1\leq p<\infty$ and let $n\geq1$ be fixed.  There exist dyadic
nested point sets $\Xc_j\subset\Xc_{j+1}$ and positive weights
$\{w_{j,x}\}_{x\in\Xc_j}$, constructed with a fixed parameter
$\gamma=\gamma(\Mc)$, such that
\[
        |\Xc_j|\asymp \gamma^{-d}2^{jd},
        \qquad
        w_{j,x}\asymp \gamma^d2^{-jd},
        \qquad x\in\Xc_j,\quad j\geq0,
\]
and, for all sufficiently large $j$,
\[
        (1-\eta_j)\|P\|_{L^p(\mu)}^p
        \leq
        \sum_{x\in\Xc_j} w_{j,x}|P(x)|^p
        \leq
        (1+\eta_j)\|P\|_{L^p(\mu)}^p,
        \qquad P\in\Pd_n,
\]
where $\eta_j\leq C_{\Mc,p}\,n\,2^{-j}$. In particular, the Marcinkiewicz--Zygmund constants tend to $1$ as $j\to\infty$.
\end{theorem}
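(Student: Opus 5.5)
We will reuse the construction of Lemma~\ref{lem:nested-quasi-uniform} with a fixed parameter $\gamma=\gamma(\Mc)$, for instance $\gamma:=R_\Mc/2$, and equip each $\Xc_j$ with the partition weights of Definition~\ref{def:partition-weights}. Then $R_j=\gamma 2^{-j}$, and Lemma~\ref{lem:nested-quasi-uniform} gives nestedness together with
\[
h_{\Xc_j}\leq R_j,\qquad \sepa(\Xc_j)\geq R_j,\qquad |\Xc_j|\asymp\gamma^{-d}2^{jd}.
\]
Lemma~\ref{lem:dyadic-weights} gives $w_{j,x}\asymp\gamma^d2^{-jd}$. So the cardinality and weight statements come directly from the earlier lemmas. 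As in the proof of Theorem~\ref{thm:fixed-tolerance-mz}, $h_{\Xc_j}\leq\sepa(\Xc_j)$, so the quasi-uniformity hypothesis of Proposition~\ref{prop:filbir-mhaskar} holds with $\kappa=1$. Let $c=c(\Mc,1,p)$ be the corresponding constant.

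The key step is to read Proposition~\ref{prop:filbir-mhaskar} with the degree $n$ fixed and the tolerance chosen level by level. Set
\[
\eta_j:=\frac{p\,n\,\gamma\,2^{-j}}{c}.
\]
Then
\[
n\,h_{\Xc_j}\leq n\gamma2^{-j}=c\,\eta_j/p,
\]
which is exactly the hypothesis of the proposition with tolerance $\eta_j$. The proposition requires $0<\eta_j<1$. This holds precisely when $2^j>pn\gamma/c$, and this is the meaning of ``for all sufficiently large $j$.'' For such $j$, the proposition yields
\[
\Bigl|\int_\Mc|P|^p\,d\mu-\sum_{x\in\Xc_j}w_{j,x}|P(x)|^p\Bigr|\leq\eta_j\|P\|_{L^p(\mu)}^p,\qquad P\in\Pd_n,
\]
which is the stated two-sided inequality. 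Since $\gamma$ depends only on $\Mc$, we get $\eta_j\leq C_{\Mc,p}\,n\,2^{-j}$ with $C_{\Mc,p}=p\gamma/c$, and $\eta_j\to0$.

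The main point needing care is that the constant $c$ in Proposition~\ref{prop:filbir-mhaskar} must be uniform in $\eta$. That is, the proposition must be usable with a tolerance that varies with $j$ while the node sets and partitions are fixed in advance. As stated, $c$ depends only on $(\Mc,\kappa,p)$, so this holds. A second check is that applying the proposition at each level to $\Pd_n$ is legitimate. It is, because at every level the requirement is only on the single triple $(n,h_{\Xc_j},\eta_j)$, and the nested construction does not depend on $n$.
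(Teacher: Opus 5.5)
Your proof is correct and follows essentially the same route as the paper: fix $\gamma=R_\Mc/2$, use the nested quasi-uniform sets with partition weights (so $\kappa=1$), and apply Proposition~\ref{prop:filbir-mhaskar} at each level to the fixed space $\Pd_n$ with a level-dependent tolerance that is below $1$ for large $j$. The only cosmetic difference is that the paper sets $\eta_j:=\frac{p}{c}\,n\,h_{\Xc_j}$ and then bounds it by $\frac{p\gamma}{c}\,n\,2^{-j}$, whereas you define $\eta_j$ directly from the a priori bound $h_{\Xc_j}\leq\gamma 2^{-j}$; both choices work, since the proposition's hypothesis is only the inequality $nh_{\Xc_j}\leq c\,\eta_j/p$.
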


\begin{proof}
Let $c$ be the constant in Proposition~\ref{prop:filbir-mhaskar} with
$\kappa=1$.  Fix once and for all a number $0<\gamma<R_\Mc$, for
instance $\gamma=R_\Mc/2$.  Construct the dyadic nested sets with
$R_j=\gamma2^{-j}$, and define
$w_{j,x}=\mu(Y_{j,x})$ as in Lemma~\ref{lem:dyadic-weights}.
For each $j$, Lemma~\ref{lem:nested-quasi-uniform} gives
$h_{\Xc_j}\leq R_j=\gamma2^{-j}$ and
$h_{\Xc_j}\leq\sepa(\Xc_j)$.  Define
\[
        \eta_j:=\frac{p}{c}\,n\,h_{\Xc_j}.
\]
Since $h_{\Xc_j}\leq\gamma2^{-j}$, we have
\[
        \eta_j\leq \frac{p\gamma}{c}\,n\,2^{-j}.
\]
Because $\gamma$ depends only on $\Mc$, the constant $p\gamma/c$ may be
absorbed into $C_{\Mc,p}$.
This also shows that $\eta_j\to0$ as $j\to\infty$, and hence
$\eta_j<1$ for all sufficiently large $j$.  For such $j$, $n h_{\Xc_j}=c\,\eta_j/p$,
so Proposition~\ref{prop:filbir-mhaskar} applies to $\Pd_n$ with
tolerance $\eta_j$ and gives the desired Marcinkiewicz--Zygmund inequality.  The cardinality
and weight estimates follow from Lemmas~\ref{lem:nested-quasi-uniform}
and \ref{lem:dyadic-weights}, since $R_j=\gamma2^{-j}$.
\end{proof}

\section{Nearly tight framelet systems from nested Marcinkiewicz--Zygmund measures}\label{sec:framelets_from_nested_marcinkiewicz_zygmund_arrays}

We now turn the nested sampling inequalities into multiscale systems.
The continuous framelet construction provides spectral building blocks
at dyadic scales, with a center variable $y$ ranging over the whole
manifold $\Mc$. To obtain a usable semi-discrete system, this center variable
must be sampled at each scale.

\subsection{Continuous framelets on compact manifolds}

We first recall the continuous framelet construction on a compact
manifold, following the spectral approach in \cite{wang2020tight}.  Let
$\Psi:=\{\alpha;\beta^1,\ldots,\beta^r\}$
be a finite collection of \emph{framelet generators} in $L_1(\R)$ whose
Fourier transforms are compactly supported on the nonnegative frequency
axis.  Throughout this section we use the \(2\pi\)-Fourier convention
$\widehat h(\xi)
        :=
        \int_\R h(t)e^{-2\pi i t\xi}\,dt$.
We fix an arbitrary starting scale $J\in\mathbb{N}_0$, and define, for $j\geq J$ and $y\in\Mc$, the low- and
high-pass continuous framelet kernels by
\begin{equation}
\label{eq:low-pass-kernel}
        \varphi_{j,y}(x)
        :=
        \sum_{\ell=0}^{\infty}
        \widehat\alpha\!\left(\frac{\lambda_\ell}{2^j}\right)
        \overline{u_\ell(y)}\,u_\ell(x),
        \qquad x\in\Mc,
\end{equation}
and
\begin{equation}
\label{eq:high-pass-kernel}
        \psi^k_{j,y}(x)
        :=
        \sum_{\ell=0}^{\infty}
        \widehat{\beta^k}\!\left(\frac{\lambda_\ell}{2^j}\right)
        \overline{u_\ell(y)}\,u_\ell(x),
        \qquad x\in\Mc,\quad k=1,\ldots,r,
\end{equation}
respectively. The variable $y\in \Mc$ plays the role of the translation parameter. 
\begin{definition}[Continuous framelet system]
\label{def:continuous-framelet-system}
Let $J_0\in\mathbb{N}_0$ be an integer. For any $J\geq J_0$, the family
\[
        \mathrm{CFS}_J(\Psi)
        :=
        \{\varphi_{J,y}:y\in\Mc\}
        \cup
        \{\psi^k_{j,y}:y\in\Mc,\ j\geq J,\ k=1,\ldots,r\}
\]
is called the \emph{continuous framelet system} generated by $\Psi$ from
scale $J$.
\end{definition}

The system $\mathrm{CFS}_J(\Psi)$ is called a continuous \emph{tight}
frame for $L^2(\Mc,\mu)$ if its elements belong to $L^2(\Mc,\mu)$ and
if, for every $f\in L^2(\Mc,\mu)$,
\begin{equation}
\label{eq:continuous-tight-frame-identity}
        \|f\|_{L^2(\mu)}^2
        =
        \int_{\Mc}
        |\langle f,\varphi_{J,y}\rangle|^2\,d\mu(y)
        +
        \sum_{j=J}^{\infty}\sum_{k=1}^r
        \int_{\Mc}
        |\langle f,\psi^k_{j,y}\rangle|^2\,d\mu(y).
\end{equation}
The tightness condition \eqref{eq:continuous-tight-frame-identity} can be verified in a spectral manner.  Assume that, for every
$\ell\geq0$,
\begin{equation}
\label{eq:spectral-lowpass-limit}
        \lim_{j\to\infty}
        \left|
        \widehat\alpha\!\left(\frac{\lambda_\ell}{2^j}\right)
        \right|
        =1,
\end{equation}
and that, for every $j\geq J$ and every $\ell\geq0$,
\begin{equation}
\label{eq:spectral-refinement}
        \left|
        \widehat\alpha\!\left(\frac{\lambda_\ell}{2^{j+1}}\right)
        \right|^2
        =
        \left|
        \widehat\alpha\!\left(\frac{\lambda_\ell}{2^j}\right)
        \right|^2
        +
        \sum_{k=1}^r
        \left|
        \widehat{\beta^k}\!\left(\frac{\lambda_\ell}{2^j}\right)
        \right|^2 .
\end{equation}
Then the identities \eqref{eq:spectral-refinement} telescope to
\begin{equation}
\label{eq:spectral-partition}
        \left|
        \widehat\alpha\!\left(\frac{\lambda_\ell}{2^J}\right)
        \right|^2
        +
        \sum_{j=J}^{\infty}\sum_{k=1}^r
        \left|
        \widehat{\beta^k}\!\left(\frac{\lambda_\ell}{2^j}\right)
        \right|^2
        =
        1,\qquad \ell\geq0.
\end{equation}
Consequently, $\mathrm{CFS}_J(\Psi)$ is a continuous tight frame.  To see
this, first let $f$ be a finite spectral expansion,
$f=\sum_{\ell\geq0}\widehat f_\ell u_\ell$.  Then
\begin{equation}
\label{eq:framelet-coefficient-expansions}
 \langle f,\varphi_{j,y}\rangle
        =
        \sum_{\ell\geq0}
        \widehat f_\ell\,
        \overline{
        \widehat\alpha\!\left(\frac{\lambda_\ell}{2^j}\right)}
        u_\ell(y),\qquad
\langle f,\psi^k_{j,y}\rangle
        =
        \sum_{\ell\geq0}
        \widehat f_\ell\,
        \overline{
        \widehat{\beta^k}\!\left(\frac{\lambda_\ell}{2^j}\right)}
        u_\ell(y).
\end{equation}
Parseval's identity in the $y$ variable gives
\[
        \int_\Mc |\langle f,\varphi_{J,y}\rangle|^2\,d\mu(y)
        =
        \sum_{\ell\geq0}
        |\widehat f_\ell|^2
        \left|
        \widehat\alpha\!\left(\frac{\lambda_\ell}{2^J}\right)
        \right|^2
\]
and the analogous identity for each high-pass term.  Summing over the
high-pass levels and using \eqref{eq:spectral-partition}, the continuous
framelet energy is exactly $\sum_{\ell\geq0}|\widehat f_\ell|^2$.
For a general $f\in L^2(\Mc,\mu)$, the same identity follows by
approximating $f$ by finite spectral sums and using Tonelli's theorem
for the nonnegative energy terms.  Further details and equivalent
conditions can be found in \cite[Theorem 2.1]{wang2020tight}.

\subsection{Nested Marcinkiewicz--Zygmund sampling and semi-discrete framelets}

The continuous system \(\mathrm{CFS}_J(\Psi)\) uses discrete scales and
continuous centers.  Semi-discretization further replaces the center continuum
at each scale by finitely many weighted nodes while preserving
\(L^2(\Mc,\mu)\) frame bounds.  Exact-quadrature constructions in
\cite{wang2020tight,xiao2023spherical} preserve tightness but require
structured rules, such as spherical \(t\)-designs.  Here we instead use
the nested measures from
Section~\ref{sec:deterministic_dyadic_nested_sampling_families}, whose Marcinkiewicz--Zygmund stability also allows geometrically controlled scattered and nested
nodes.

For a fixed function \(f\), the quantities to be discretized are the
center-variable coefficient functions
\begin{equation}\label{equ:coefficientfunctions}
        P_J^\varphi(y):=\langle f,\varphi_{J,y}\rangle,
        \qquad
        P_j^{\psi^k}(y):=\langle f,\psi^k_{j,y}\rangle .
\end{equation}
To use the Marcinkiewicz--Zygmund measures,
these functions should lie in finite-dimensional diffusion polynomial
spaces.  Similar to the exact quadrature-based discretization
\cite[Corollary 2.6]{wang2020tight}, we ensure this by imposing a
compact spectral support condition on the generators.  Namely, we assume
that \(\Psi\) satisfies \eqref{eq:spectral-lowpass-limit} and
\eqref{eq:spectral-refinement}, and 
\begin{equation}
\label{eq:filter-unit-support}
        \left(\supp \widehat\alpha\cap[0,\infty)\right)
        \cup
        \bigcup_{k=1}^r
        \left(\supp \widehat{\beta^k}\cap[0,\infty)\right)
        \subset [0,1].
\end{equation}
If a multiplier at scale $j$ is supported in $[0,1]$ after the
normalization $\lambda_\ell\mapsto \lambda_\ell/2^j$, then only
eigenvalues $\lambda_\ell\leq 2^j$ can occur.  The next lemma records
the resulting bandwidth of the coefficient functions \eqref{equ:coefficientfunctions}.

\begin{lemma}[Bandwidth of coefficient functions]
\label{lem:coefficient-bandwidth}
Assume that \(\Psi\) satisfies \eqref{eq:filter-unit-support}. For every $f\in L^2(\Mc,\mu)$,
\begin{equation}
\label{eq:coefficient-bandwidth}
        P_J^\varphi\in \Pd_{2^J},
        \qquad
        P_j^{\psi^k}\in \Pd_{2^j},
        \qquad j\geq J,\quad k=1,\ldots,r .
\end{equation}
\end{lemma}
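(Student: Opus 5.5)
The plan is to compute the coefficient functions explicitly in the eigenbasis and read off which eigenfunctions can appear. Fix $f\in L^2(\Mc,\mu)$ with coefficients $\widehat f_\ell$, and fix $j\geq J$.

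\textbf{Step 1: the kernel has finitely many terms.} By \eqref{eq:filter-unit-support}, $\widehat\alpha(\lambda_\ell/2^j)=0$ whenever $\lambda_\ell>2^j$, since $\lambda_\ell/2^j\geq0$ lies in $[0,\infty)$ but outside $[0,1]$. The same holds for each $\widehat{\beta^k}$. Because $\lambda_\ell\to\infty$, only finitely many indices survive. So the series \eqref{eq:low-pass-kernel} and \eqref{eq:high-pass-kernel} are finite sums over $\{\ell:\lambda_\ell\leq 2^j\}$. In particular $\varphi_{j,y},\psi^k_{j,y}\in\Pd_{2^j}\subset L^2(\mu)$ for each fixed $y$, so the pairings in \eqref{equ:coefficientfunctions} are well defined for arbitrary $f\in L^2(\mu)$.

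\textbf{Step 2: compute the pairing.} Since the kernel is a finite sum, the inner product can be expanded term by term, with no convergence issue:
\[
\langle f,\psi^k_{j,y}\rangle=\sum_{\lambda_\ell\leq 2^j}\overline{\widehat{\beta^k}(\lambda_\ell/2^j)}\,u_\ell(y)\,\langle f,u_\ell\rangle
=\sum_{\lambda_\ell\leq 2^j}\widehat f_\ell\,\overline{\widehat{\beta^k}(\lambda_\ell/2^j)}\,u_\ell(y).
\]
This recovers \eqref{eq:framelet-coefficient-expansions} for general $f$, not just finite spectral sums. The same computation with $\widehat\alpha$ and $j=J$ handles $P_J^\varphi$.

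\textbf{Step 3: conclude.} As a function of $y$, each $P$ is a finite linear combination of the $u_\ell$ with $\lambda_\ell\leq 2^j$. Hence $P_J^\varphi\in\Pd_{2^J}$ and $P_j^{\psi^k}\in\Pd_{2^j}$.

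The only step needing care is Step 1. One must confirm that the support condition, stated only on $[0,\infty)$, suffices; it does, since $\lambda_\ell\geq0$. One must also justify interchanging the sum with the inner product, which is trivial here because the sum is finite. The rest is routine.
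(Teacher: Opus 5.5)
Your proposal is correct and follows the paper's proof: expand the coefficient function in the eigenbasis via \eqref{eq:framelet-coefficient-expansions}, then use \eqref{eq:filter-unit-support} to eliminate every term with $\lambda_\ell>2^j$. Your Step~1 fills in a point the paper leaves implicit. The kernels are finite sums, so the term-by-term pairing is valid for every $f\in L^2(\mu)$, whereas the paper had derived \eqref{eq:framelet-coefficient-expansions} only for finite spectral sums.
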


\begin{proof}
By the coefficient expansion \eqref{eq:framelet-coefficient-expansions},
\[
        P_J^\varphi(y)
        =
        \sum_{\ell\geq0}
        \widehat f_\ell\,
        \overline{
        \widehat\alpha\!\left(\frac{\lambda_\ell}{2^J}\right)}
        u_\ell(y).
\]
The support condition \eqref{eq:filter-unit-support} implies that
the coefficient multiplying $u_\ell$ is zero unless
$\lambda_\ell/2^J\leq 1$, or equivalently
$\lambda_\ell\leq 2^J$.  Therefore
$P_J^\varphi\in\Pd_{2^J}$.  The proof for
$P_j^{\psi^k}$ is similar.
\end{proof}

\begin{remark}[On products of diffusion polynomials]
\label{rem:products-diffusion-polynomials}
Since the product of two diffusion polynomials may not necessarily be a diffusion polynomial,
exact-quadrature discretizations often require a product assumption,
such as \(|P|^2\in\Pd_{Cn}\) for \(P\in\Pd_n\); see, e.g.,
\cite{wang2020tight}.  Filbir and Mhaskar proved the existence of a manifold-dependent
constant \(C_{\mathrm{prod}}\geq2\) such that
\[
        P\overline Q\in\Pd_{C_{\mathrm{prod}}2^j}
        \qquad\text{for all }P,Q\in\Pd_{2^j};
\]
see 
\cite[Theorem A.1]{filbir2011marcinkiewicz}.
Consequently, exactness through degree \(C_{\mathrm{prod}}2^j\) makes the level-\(j\)
weighted Fourier map an isometry.  On the sphere and the flat torus one
may take \(C_{\mathrm{prod}}=2\), so exactness through degree \(2^{j+1}\) is enough.
On a general compact manifold, however, \(C_{\mathrm{prod}}\) need not equal \(2\),
and the quadrature degree must be enlarged accordingly.  Indeed, the
product inclusion and the corresponding exactness rule give
\(\sum_{x\in\Xc_j}w_{j,x}P(x)\overline{Q(x)}
=\langle P,Q\rangle_{L^2(\mu)}\).  Thus, exactness alone is not the
relevant condition, and it must be paired with a product-bandwidth
inclusion of this form.
The Marcinkiewicz--Zygmund approach in the current paper, however, needs no such hypothesis:
Proposition~\ref{prop:filbir-mhaskar} directly provides the required
two-sided estimate for \(|P|^2\).
\end{remark}

Since the Marcinkiewicz--Zygmund inequalities are weighted, we absorb
\(w_x^{1/2}\) into each sampled atom.  The resulting unweighted
\(\ell^2\)-sum of framelet coefficients then coincides with the
corresponding weighted Marcinkiewicz--Zygmund sum.  We first define this
semi-discrete system and then establish its frame property.

\begin{definition}[Semi-discrete framelets from nested Marcinkiewicz--Zygmund measures]
\label{def:semi-discrete-framelet-system}
Let $J_0\in\mathbb{N}_0$ and $J\geq J_0$.  For each $j\geq0$, let $\Xc_j$ be a finite
subset of $\Mc$ and let $\{w_{j,x}\}_{x\in\Xc_j}$ be positive weights.
Assume that the sampling family is nested, that is,
$\Xc_j\subset\Xc_{j+1}$ for all $j\geq0$.  The family
\begin{equation}\label{eq:nested-framelet-system}
        \mathrm{FS}_J(\Psi;\Xc,w)
        =
        \{w_{J,x}^{1/2}\varphi_{J,x}:x\in\Xc_J\}
        \cup
        \{w_{j+1,x}^{1/2}\psi_{j,x}^k:
        x\in\Xc_{j+1},\ j\geq J,\ k=1,\ldots,r\}
\end{equation}
is called the \emph{semi-discrete framelet system from nested Marcinkiewicz--Zygmund measures},
associated with the continuous framelet system $\mathrm{CFS}_J(\Psi)$.
\end{definition}

Recall that a semi-discrete framelet system $\mathrm{FS}_J$ is a \emph{frame} for
$L^2(\Mc,\mu)$ with frame bounds $0<A\leq B<\infty$ if its framelet
coefficients satisfy
\begin{equation*}
        A\|f\|_{L^2(\mu)}^2
        \leq
        \sum_{g\in\mathrm{FS}_J}
        |\langle f,g\rangle|^2
        \leq
        B\|f\|_{L^2(\mu)}^2
\end{equation*}
for every $f\in L^2(\Mc,\mu)$.  If $A=B=1$, the system is \emph{tight}. With the normalization inherited from the continuous tight frame, we
call the system \(\varepsilon\)-\emph{nearly tight} if
\[
(1-\varepsilon)\|f\|_{L^2(\mu)}^2
\leq
\sum_{g\in\mathrm{FS}_J}|\langle f,g\rangle|^2
\leq
(1+\varepsilon)\|f\|_{L^2(\mu)}^2,
\qquad 0\leq\varepsilon<1.
\]
Equivalently, its frame operator \(S\) satisfies
\(\|S-I\|\leq\varepsilon\).

\begin{theorem}[Nearly tight framelet systems from nested Marcinkiewicz--Zygmund measures]
\label{thm:nested-framelets}
Let $J_0\in\mathbb{N}_0$ and $J\geq J_0$.  Assume that $\Psi$ satisfies
\eqref{eq:spectral-lowpass-limit}, \eqref{eq:spectral-refinement}, and
\eqref{eq:filter-unit-support}.  Let \(0<\eta<1\), and form the
nested semi-discrete framelet system \eqref{eq:nested-framelet-system}
from the fixed-tolerance nested Marcinkiewicz--Zygmund measures in
Theorem~\ref{thm:fixed-tolerance-mz}, applied with \(p=2\) and
constructed with the parameter \(\gamma_{\eta,2,\Mc}\).  Then \(\mathrm{FS}_J(\Psi;\Xc,w)\) is an
\(\eta\)-nearly tight frame for \(L^2(\Mc,\mu)\), with frame bounds
\(1-\eta\) and \(1+\eta\).
\end{theorem}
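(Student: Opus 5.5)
The plan is to compare the discrete framelet energy with the continuous one term by term, using the MZ inequality on each coefficient function. First restrict to $f$ with a finite spectral expansion; at the end, extend to all of $L^2(\Mc,\mu)$ by density. The upper frame bound then extends by Fatou or monotone convergence. Once the upper bound holds, the frame operator is a bounded operator. The lower bound then passes to limits, because both sides of the inequality are continuous in $f$.

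For such $f$, write the discrete energy as
\[
\sum_{x\in\Xc_J} w_{J,x}|P_J^\varphi(x)|^2+\sum_{j\geq J}\sum_{k=1}^r\sum_{x\in\Xc_{j+1}}w_{j+1,x}|P_j^{\psi^k}(x)|^2,
\]
since $\langle f,w^{1/2}\varphi_{J,x}\rangle=w^{1/2}P_J^\varphi(x)$, with the convention that conjugating real weights is harmless.

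By Lemma~\ref{lem:coefficient-bandwidth}, $P_J^\varphi\in\Pd_{2^J}=\Pd_{n_J}$ and $P_j^{\psi^k}\in\Pd_{2^j}\subset\Pd_{2^{j+1}}=\Pd_{n_{j+1}}$. We use the level-$(j+1)$ measure for the detail level $j$, which is legitimate because the bandwidth $2^j$ sits inside $n_{j+1}$; nestedness is not even needed for this step. Theorem~\ref{thm:fixed-tolerance-mz} with $p=2$ then bounds each sampled term between $(1\mp\eta)$ times the corresponding continuous $L^2(\mu)$ norm, namely $\int_\Mc|\langle f,\varphi_{J,y}\rangle|^2d\mu(y)$, respectively the analogous integral for $\psi^k_{j,y}$.

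Summing over $j$ and $k$, with only finitely many nonzero terms for finite spectral $f$, and applying the continuous tight-frame identity \eqref{eq:continuous-tight-frame-identity} gives
\[
(1-\eta)\|f\|^2\leq \text{(discrete energy)}\leq(1+\eta)\|f\|^2.
\]

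The main obstacle is the density extension. For general $f\in L^2(\mu)$ the coefficient functions are still polynomials of the stated bandwidth, by Lemma~\ref{lem:coefficient-bandwidth}. So the termwise MZ bounds hold directly for every $f$, and summing infinitely many nonnegative terms via Tonelli, together with \eqref{eq:continuous-tight-frame-identity} for general $f$, gives the result without any approximation. I would therefore present it this way and avoid density altogether.

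The only care needed is that the series of nonnegative terms converges; this follows from the upper bound and the finiteness of the continuous energy. It is also worth checking that the atoms lie in $L^2$: each kernel is a finite sum of eigenfunctions, by the support condition \eqref{eq:filter-unit-support}.
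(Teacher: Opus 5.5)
Your final argument is correct and is essentially the paper's proof. You apply the level-$J$ MZ inequality to $P_J^\varphi\in\Pd_{2^J}$ and the level-$(j+1)$ inequality to $P_j^{\psi^k}\in\Pd_{2^j}\subset\Pd_{2^{j+1}}$ for arbitrary $f\in L^2(\Mc,\mu)$, then sum the nonnegative terms and invoke \eqref{eq:continuous-tight-frame-identity}. The paper does the same, except that it sums over $J\leq j\leq L$ and lets $L\to\infty$ by monotone convergence rather than citing Tonelli.

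One small correction concerns the density paragraph you discard. The stated hypotheses do not force $\widehat{\beta^k}$ to vanish near the origin: for example, $r=1$ with $|\widehat\alpha(\xi)|^2=(1-2\xi)_+$ gives $|\widehat{\beta^1}(\xi)|^2=\xi$ near $0$. So a finite spectral $f$ can have infinitely many nonzero detail terms. This is harmless, since only nonnegative series are summed.
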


\begin{proof}
For fixed $f\in L^2(\Mc,\mu)$, by Lemma~\ref{lem:coefficient-bandwidth},
$P_J^\varphi\in\Pd_{2^J}$ and
$P_j^{\psi^k}\in\Pd_{2^j}$.  Since the level-$q$ rule in
Theorem~\ref{thm:fixed-tolerance-mz} is a Marcinkiewicz--Zygmund rule for $\Pd_{2^q}$,
we apply it at level $q=J$ to the low-pass term.  For a high-pass term
at scale $j$, we use the level-$(j+1)$ rule; this is valid because
$\Pd_{2^j}\subset\Pd_{2^{j+1}}$.  The low-pass estimate is
\[
        (1-\eta)\int_\Mc |P_J^\varphi(y)|^2\,d\mu(y)
        \leq
        \sum_{x\in\Xc_J}
        w_{J,x}|P_J^\varphi(x)|^2
        \leq
        (1+\eta)\int_\Mc |P_J^\varphi(y)|^2\,d\mu(y),
\]
which is the same as
\begin{equation}\label{equ:lowestimate}
        (1-\eta)\|P_J^\varphi\|_{L^2(\mu)}^2
        \leq
        \sum_{x\in\Xc_J}
        |\langle f,w_{J,x}^{1/2}\varphi_{J,x}\rangle|^2
        \leq
        (1+\eta)\|P_J^\varphi\|_{L^2(\mu)}^2.
\end{equation}
Similarly, for every $j\geq J$ and $k=1,\ldots,r$,
\begin{equation}\label{equ:highestimate}
        (1-\eta)\int_\Mc |P_j^{\psi^k}(y)|^2\,d\mu(y)
        \leq
        \sum_{x\in\Xc_{j+1}}
        |\langle f,w_{j+1,x}^{1/2}\psi_{j,x}^k\rangle|^2
        \leq
        (1+\eta)\int_\Mc |P_j^{\psi^k}(y)|^2\,d\mu(y).
\end{equation}
Summing the low-pass estimate \eqref{equ:lowestimate} and the high-pass estimates \eqref{equ:highestimate} over
$J\leq j\leq L$ gives the same two-sided inequality for the corresponding
finite partial coefficient energy.  Since all terms are nonnegative, we
may let $L\to\infty$ and use monotone convergence on the discrete sums.
On the continuous side, the corresponding partial energies converge to
$\|f\|_{L^2(\mu)}^2$ by the tight identity
\eqref{eq:continuous-tight-frame-identity}.  This proves the frame
bounds $1-\eta$ and $1+\eta$ for $\mathrm{FS}_J(\Psi;\Xc,w)$.
\end{proof}

\begin{remark}[Sampling cost of near-tightness]
The tolerance \(\eta\) in Theorem~\ref{thm:nested-framelets} is the same
at every level, but it can be chosen when constructing the measures.
Choosing a smaller \(\eta\) gives tighter frame bounds \(1\pm\eta\).
The price is a denser sampling family.  Indeed, by
Theorem~\ref{thm:fixed-tolerance-mz} with \(p=2\) and \(n_j=2^j\),
the construction gives
\[
        |\Xc_j|\asymp \gamma_{\eta,2,\Mc}^{-d}2^{jd},
        \qquad
        w_{j,x}\asymp \gamma_{\eta,2,\Mc}^{d}2^{-jd},
        \qquad x\in\Xc_j,\quad j\geq0 .
\]
Here the construction parameter is chosen subject to $0<\gamma_{\eta,2,\Mc}<\min\{R_\Mc,c\eta/2\}$, where \(c\) is the constant in Proposition~\ref{prop:filbir-mhaskar}
with \(p=2\) and \(\kappa=1\).
Thus improving the frame bounds by decreasing \(\eta\) generally
amplifies the number of sampling centers through the factor
\(\gamma_{\eta,2,\Mc}^{-d}\), which is of order \(\eta^{-d}\) when
\(\gamma_{\eta,2,\Mc}\) is chosen proportional to \(\eta\).
A high-pass
term at scale \(j\) uses \(\Xc_{j+1}\), and hence has $|\Xc_{j+1}|\asymp
        \gamma_{\eta,2,\Mc}^{-d}2^{(j+1)d}$ sampling centers.
\end{remark}

There is another way to improve the frame bounds once a nested family
has been constructed.  Instead of rebuilding the whole family with a
smaller value of \(\gamma_{\eta,2,\Mc}\), one may keep the same nested
sets and add \(s\) refinement steps beyond the basic sampling levels.
Thus the low-pass term at scale \(J\) is sampled on \(\Xc_{J+s}\),
whereas a high-pass coefficient function at scale \(j\), whose basic
center set is \(\Xc_{j+1}\), is sampled on \(\Xc_{j+1+s}\). This is
the sense in which the next construction is oversampled: all sampling
levels in the basic semi-discrete system are shifted upward by \(s\).

\begin{definition}[Oversampled semi-discrete framelets from nested Marcinkiewicz--Zygmund measures]
\label{def:oversampled-framelet-system}
Let $J_0\in\mathbb{N}_0$ and $J\geq J_0$.  Assume that $\Psi$ satisfies
\eqref{eq:spectral-lowpass-limit}, \eqref{eq:spectral-refinement}, and
\eqref{eq:filter-unit-support}.  Let
\(\{\Xc_j,w_{j,x}\}_{j\geq0}\) be a nested weighted sampling family, and
let \(s\geq0\) be an integer.
The shifted sampling family remains nested in the sense of $\Xc_{q+s}\subset \Xc_{q+1+s}$, $q\geq0$.
Define
\[
        \varphi_{J,x}^{(s)}
        :=
        w_{J+s,x}^{1/2}\varphi_{J,x},
        \qquad x\in\Xc_{J+s},
\]
and
\[
        \psi_{j,x}^{k,(s)}
        :=
        w_{j+1+s,x}^{1/2}\psi^k_{j,x},
        \qquad x\in\Xc_{j+1+s},\quad j\geq J,\quad k=1,\ldots,r .
\]
The family
\begin{equation}
\label{eq:oversampled-system}
        \mathrm{FS}_J^{(s)}
        :=
        \{\varphi_{J,x}^{(s)}:x\in\Xc_{J+s}\}
        \cup
        \{\psi_{j,x}^{k,(s)}:
        x\in\Xc_{j+1+s},\ j\geq J,\ k=1,\ldots,r\}
\end{equation}
is called the \emph{oversampled semi-discrete framelet system from nested Marcinkiewicz--Zygmund measures}
with \emph{oversampling shift} \(s\).
\end{definition}

\begin{theorem}[Near-tightness under nested oversampling]
\label{thm:oversampled-framelets}
Let $J_0\in\mathbb{N}_0$ and $J\geq J_0$.   Assume that $\Psi$ satisfies
\eqref{eq:spectral-lowpass-limit}, \eqref{eq:spectral-refinement}, and
\eqref{eq:filter-unit-support}.  Let
\(\{\Xc_j,w_{j,x}\}_{j\geq0}\) be the nested weighted family
constructed in Lemmas~\ref{lem:nested-quasi-uniform} and
\ref{lem:dyadic-weights} with a fixed parameter
\(0<\gamma<R_\Mc\), and form the oversampled nested semi-discrete
framelet system \eqref{eq:oversampled-system} from this family.
Let \(c\) be the constant in Proposition~\ref{prop:filbir-mhaskar}
with \(p=2\) and \(\kappa=1\), and set $C_{\Mc}:={2\gamma}/{c}$.
Let \(s\geq0\) be an integer such that $\rho_s:=C_{\Mc}2^{-s}<1$.
Then \(\mathrm{FS}_J^{(s)}\) is a
\(\rho_s\)-nearly tight frame, where
\(\rho_s=C_{\Mc}2^{-s}\).  In particular, these systems become
asymptotically tight as \(s\to\infty\).
\end{theorem}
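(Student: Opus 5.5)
The plan mirrors the proof of Theorem~\ref{thm:nested-framelets}. The one change is that, at each sampling level, I invoke Proposition~\ref{prop:filbir-mhaskar} directly with a level-dependent tolerance, as in the proof of Theorem~\ref{thm:fixed-degree-refinement}. The uniform fixed tolerance is no longer used.

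Fix $f\in L^2(\Mc,\mu)$. By Lemma~\ref{lem:coefficient-bandwidth}, $P_J^\varphi\in\Pd_{2^J}$ and $P_j^{\psi^k}\in\Pd_{2^j}$.

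\emph{Low-pass term.} The coefficient function $P_J^\varphi$ is sampled on $\Xc_{J+s}$. Lemma~\ref{lem:nested-quasi-uniform} gives $h_{\Xc_{J+s}}\leq \gamma 2^{-J-s}$ and $h_{\Xc_{J+s}}\leq\sepa(\Xc_{J+s})$, so the hypothesis $\kappa=1$ holds. The partition $\{Y_{J+s,x}\}$ from Definition~\ref{def:partition-weights} satisfies $Y_{J+s,x}\subset B(x,h_{\Xc_{J+s}})$. Take $n=2^J$, $p=2$, and tolerance $\eta:=\frac{2}{c}\,2^J h_{\Xc_{J+s}}$. Then
\[
\eta\leq \frac{2\gamma}{c}2^{-s}=\rho_s<1,
\qquad\text{and}\qquad
n h_{\Xc_{J+s}}=c\eta/2 .
\]
Proposition~\ref{prop:filbir-mhaskar} therefore applies. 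It gives a two-sided estimate with constants $1\pm\eta$, and since $\eta\leq\rho_s$ this weakens to constants $1\pm\rho_s$. One point needs care: the proposition requires $0<\eta<1$ strictly. If $h=0$ could occur, one would simply apply the proposition with tolerance $\rho_s$ itself. Indeed the monotone form is cleaner: $nh\leq \gamma 2^{-s}=c\rho_s/2$ directly, so I will apply the proposition with tolerance $\rho_s$. Rewriting $w_{J+s,x}|P_J^\varphi(x)|^2=|\langle f,\varphi^{(s)}_{J,x}\rangle|^2$ yields
\[
(1-\rho_s)\|P_J^\varphi\|_{L^2(\mu)}^2\leq\sum_{x\in\Xc_{J+s}}|\langle f,\varphi^{(s)}_{J,x}\rangle|^2\leq(1+\rho_s)\|P_J^\varphi\|_{L^2(\mu)}^2 .
\]

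\emph{High-pass terms.} For scale $j\geq J$, the function $P_j^{\psi^k}\in\Pd_{2^j}$ is sampled on $\Xc_{j+1+s}$. Here
\[
2^j h_{\Xc_{j+1+s}}\leq \gamma 2^{-1-s}\leq c\rho_s/2,
\]
so the same argument gives the two-sided bound with constants $1\pm\rho_s$. This level actually gains an extra factor of $2$, which is harmless.

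\emph{Summation.} Sum the low-pass bound and the high-pass bounds over $J\leq j\leq L$ and $k=1,\ldots,r$. Since all terms are nonnegative, let $L\to\infty$ by monotone convergence. The continuous tight identity \eqref{eq:continuous-tight-frame-identity} then identifies the continuous energy with $\|f\|_{L^2(\mu)}^2$. This gives frame bounds $1\pm\rho_s$, and $\rho_s\to0$ as $s\to\infty$.

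The only delicate point is the bookkeeping of the bandwidth-to-mesh ratio. One must check that the constant $c$ is the one for $p=2$ and $\kappa=1$. One must also check that the worst ratio occurs at the low-pass level, where the ratio is $2^J\cdot\gamma2^{-J-s}$ without the extra halving. This is exactly what forces $C_\Mc=2\gamma/c$ rather than $\gamma/c$. I expect no genuine obstacle beyond this verification.
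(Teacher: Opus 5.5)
Your proposal is correct and follows essentially the same route as the paper: bandwidth control via Lemma~\ref{lem:coefficient-bandwidth}, then Proposition~\ref{prop:filbir-mhaskar} with $p=2$, $\kappa=1$ on $\Xc_{J+s}$ and $\Xc_{j+1+s}$ using $h_{\Xc_q}\leq\gamma 2^{-q}$, then summation, monotone convergence and the continuous tight identity. The only cosmetic difference is that you apply the proposition directly with tolerance $\rho_s$ (since $nh\leq\gamma2^{-s}=c\rho_s/2$), whereas the paper uses the level-dependent tolerance $(2/c)\,2^j h_{\Xc_{j+1+s}}$ and then weakens it to $\rho_s$.
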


\begin{proof}
We prove the estimate for a high-pass coefficient function; the 
low-pass term is treated similarly.  By Lemma~\ref{lem:coefficient-bandwidth},
$P_j^{\psi^k}\in\Pd_{2^j}$.
It is sampled on level $j+1+s$.  For the dyadic construction in
Lemma~\ref{lem:nested-quasi-uniform}, the mesh norm at this sampling
level satisfies $h_{\Xc_{j+1+s}}
        \leq
        \gamma 2^{-(j+1+s)}$.
Therefore, with $N_j:=2^j$, we have
$N_j h_{\Xc_{j+1+s}}
        \leq
        (\gamma/2)2^{-s}$.
Set $\eta_{j,s}^\psi:=({2}/{c})N_jh_{\Xc_{j+1+s}}$. Proposition~\ref{prop:filbir-mhaskar} with $p=2$ and
$\kappa=1$, together with 
$\eta_{j,s}^\psi\leq (C_{\Mc}/2)2^{-s}\leq\rho_s<1$,
gives a Marcinkiewicz--Zygmund inequality for $P_j^{\psi^k}$ with tolerance
$\eta_{j,s}^\psi$.  The constant is independent of the framelet scale $j$,
the high-pass channel $k$, and the starting scale $J$. By the definition of $\psi_{j,x}^{k,(s)}$, this Marcinkiewicz--Zygmund inequality becomes
\[
\begin{aligned}
        (1-\rho_s)\|P_j^{\psi^k}\|_{L^2(\mu)}^2
        \leq&
        (1-\eta_{j,s}^\psi)\|P_j^{\psi^k}\|_{L^2(\mu)}^2  
        \leq
        \sum_{x\in\Xc_{j+1+s}}
        |\langle f,\psi_{j,x}^{k,(s)}\rangle|^2 \leq
        (1+\eta_{j,s}^\psi)\|P_j^{\psi^k}\|_{L^2(\mu)}^2   \\
        \leq&
        (1+\rho_s)\|P_j^{\psi^k}\|_{L^2(\mu)}^2 .
\end{aligned}
\]
For $P_J^\varphi\in\Pd_{2^J}$, sampled on $\Xc_{J+s}$, we have
$2^Jh_{\Xc_{J+s}}\leq\gamma2^{-s}$; hence the same Marcinkiewicz--Zygmund argument gives
tolerance at most $C_\Mc2^{-s}=\rho_s$.  Summing the low-pass estimate and the high-pass estimates
over $J\leq j\leq L$ gives the desired two-sided estimate for the
finite partial coefficient energy.  Letting \(L\to\infty\), monotone
convergence applies to the discrete sums, while the continuous partial
energies converge to \(\|f\|_{L^2(\mu)}^2\) by
\eqref{eq:continuous-tight-frame-identity}.  This proves the claimed
frame bounds.
\end{proof}

\begin{remark}[Oversampling cost]
At high-pass scale \(j\), for example, the construction uses
\[
        |\Xc_{j+1+s}|\asymp \gamma^{-d}2^{(j+1+s)d},
        \qquad
        w_{j+1+s,x}\asymp \gamma^d2^{-(j+1+s)d}.
\]
Thus the improvement from \(1\pm O(1)\) to \(1\pm O(2^{-s})\) is
obtained by using about \(2^{sd}\) more centers than the basic
high-pass set \(\Xc_{j+1}\).
\end{remark}

\section{Fully discrete framelet transform and fast implementation}
\label{sec:fully-discrete-transform}

We now pass from semi-discrete framelets to a fully discrete transform.
We use the frame-theoretic terms \emph{analysis} and \emph{synthesis}.
In the filter-bank language, analysis is implemented by \emph{decomposition},
whereas synthesis gives \emph{reconstruction} directly only in the
perfect-reconstruction case.  The same nested weighted node sets as in the construction of semi-discrete framelets are
used throughout.

\subsection{Coefficients at the finest level}

Fix a coarsest level \(J\) and a finest scaling level \(K>J\).  Since the
generators are compactly supported on the nonnegative spectral axis, we define
the finite active index set $\Lambda_j^\alpha
        :=
        \left\{\ell\geq0:
        \widehat\alpha\!\left({\lambda_\ell}/{2^j}\right)
        \neq0\right\}$,
        $j\geq J$.
The support assumption \eqref{eq:filter-unit-support} implies
\(\Lambda_j^\alpha\subset\{\ell:\lambda_\ell\leq2^j\}\).  We assume, as is
the case for the filters used below, that
\(\Lambda_{j-1}^\alpha\subset\Lambda_j^\alpha\).  

For \(\widehat{\mathbf z}
=(z_\ell)_{\ell\in\Lambda_j^\alpha}
\in\mathbb C^{\Lambda_j^\alpha}\),
the weighted discrete Fourier transform $\mathbf F_j:\mathbb C^{\Lambda_j^\alpha}\longrightarrow
        \mathbb C^{\Xc_j}$ associated with $\Xc_j$ is defined by
\begin{equation}
\label{eq:coefficient-fourier-map}
        (\mathbf F_j\widehat{\mathbf z})(x)
        :=
        \sum_{\ell\in\Lambda_j^\alpha}
        z_\ell w_{j,x}^{1/2}u_\ell(x),\qquad x\in\Xc_j.
\end{equation}
Its adjoint discrete Fourier transform $\mathbf F_j^*:\mathbb C^{\Xc_j}\longrightarrow
        \mathbb C^{\Lambda_j^\alpha}$ is then given by
\begin{equation}
\label{eq:coefficient-adjoint-fourier-map}
        (\mathbf F_j^*\mathbf v)_\ell
        :=
        \sum_{x\in\Xc_j}
        v_xw_{j,x}^{1/2}\overline{u_\ell(x)},
        \qquad \ell\in\Lambda_j^\alpha.
\end{equation}
For the oversampled
construction we use instead
\(\mathbf F_j^{(s)}:\mathbb C^{\Lambda_j^\alpha}\to
\mathbb C^{\Xc_{j+s}}\), defined by replacing \(w_{j,x}\) and \(\Xc_j\)
in \eqref{eq:coefficient-fourier-map} by \(w_{j+s,x}\) and
\(\Xc_{j+s}\).  For the Marcinkiewicz--Zygmund measures considered below,
Proposition~\ref{prop:fourier-gram-bounds} shows that \(\mathbf F_j\)
is injective and well conditioned, although
\(\mathbf F_j^*\mathbf F_j\) need not be the identity.

\begin{proposition}[Gram bounds for the weighted Fourier map]
\label{prop:fourier-gram-bounds}
Suppose the level-\(j\) sampling rule has \(L^2\)
Marcinkiewicz--Zygmund constants \(1-\varepsilon_j\) and
\(1+\varepsilon_j\), where \(0\leq\varepsilon_j<1\).  Then on $\mathbb C^{\Lambda_j^\alpha}$,
\begin{equation}
\label{eq:fourier-gram-bounds}
        (1-\varepsilon_j)I
        \leq \mathbf F_j^*\mathbf F_j
        \leq (1+\varepsilon_j)I.
\end{equation}
\end{proposition}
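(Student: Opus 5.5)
The plan is to identify the quadratic form of \(\mathbf F_j^*\mathbf F_j\) with the discrete weighted norm of a diffusion polynomial. The MZ hypothesis then gives the bounds directly, and orthonormality of \(\{u_\ell\}\) converts the continuous norm back into the Euclidean norm on \(\mathbb C^{\Lambda_j^\alpha}\).

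Fix \(\widehat{\mathbf z}=(z_\ell)_{\ell\in\Lambda_j^\alpha}\). Define the synthesized function
\[
P(y):=\sum_{\ell\in\Lambda_j^\alpha}z_\ell u_\ell(y).
\]
Since \(\Lambda_j^\alpha\subset\{\ell:\lambda_\ell\le 2^j\}\) by \eqref{eq:filter-unit-support}, we have \(P\in\Pd_{2^j}\), the space on which the level-\(j\) rule is MZ. I will read the hypothesis as "MZ for \(\Pd_{2^j}\)". In the oversampled case, \(\Pd_{2^j}\subset\) the space of the level-\((j+s)\) rule, so the same argument applies verbatim with \(\mathbf F_j^{(s)}\).

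By \eqref{eq:coefficient-fourier-map}, \((\mathbf F_j\widehat{\mathbf z})(x)=w_{j,x}^{1/2}P(x)\). Hence
\[
\langle \mathbf F_j^*\mathbf F_j\widehat{\mathbf z},\widehat{\mathbf z}\rangle=\|\mathbf F_j\widehat{\mathbf z}\|_{\ell^2(\Xc_j)}^2=\sum_{x\in\Xc_j}w_{j,x}|P(x)|^2 .
\]
Here I will check that \eqref{eq:coefficient-adjoint-fourier-map} is indeed the Euclidean adjoint of \(\mathbf F_j\); this is a one-line index swap.

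The \(L^2\) MZ inequality with constants \(1\pm\varepsilon_j\) then bounds this sum between \((1\mp\varepsilon_j)\|P\|_{L^2(\mu)}^2\). Since the \(u_\ell\) are orthonormal, Parseval gives \(\|P\|_{L^2(\mu)}^2=\sum_\ell|z_\ell|^2=\|\widehat{\mathbf z}\|^2\). This yields
\[
(1-\varepsilon_j)\|\widehat{\mathbf z}\|^2\le\langle \mathbf F_j^*\mathbf F_j\widehat{\mathbf z},\widehat{\mathbf z}\rangle\le(1+\varepsilon_j)\|\widehat{\mathbf z}\|^2 .
\]
Because \(\mathbf F_j^*\mathbf F_j\) is Hermitian positive semidefinite, these quadratic-form bounds are exactly the Loewner-order statement \eqref{eq:fourier-gram-bounds}. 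Injectivity follows from \(\varepsilon_j<1\).

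There is no real obstacle here. The only point needing care is the bandwidth bookkeeping: the active set \(\Lambda_j^\alpha\) must lie inside the polynomial space for which the level-\(j\) (or shifted level) rule is MZ. This holds by the support assumption and the nesting \(\Pd_{2^j}\subset\Pd_{2^{j+s}}\).
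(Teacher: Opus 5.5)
Your proposal is correct and follows essentially the same argument as the paper. You synthesize $P=\sum_{\ell\in\Lambda_j^\alpha} z_\ell u_\ell\in\Pd_{2^j}$ and identify $\langle \mathbf F_j^*\mathbf F_j\widehat{\mathbf z},\widehat{\mathbf z}\rangle=\|\mathbf F_j\widehat{\mathbf z}\|_2^2$ with $\sum_{x\in\Xc_j} w_{j,x}|P(x)|^2$; you then apply the $L^2$ Marcinkiewicz--Zygmund inequality and use orthonormality to get $\|P\|_{L^2(\mu)}^2=\|\widehat{\mathbf z}\|_2^2$.
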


\begin{proof}
For \(\widehat{\mathbf z}\in\mathbb C^{\Lambda_j^\alpha}\), set 
$P_{\widehat{\mathbf z}}
        :=\sum_{\ell\in\Lambda_j^\alpha}z_\ell u_\ell$.
Since \(\Lambda_j^\alpha\subseteq
\{\ell:\lambda_\ell\leq2^j\}\), we have
\(P_{\widehat{\mathbf z}}\in\Pd_{2^j}\).  Orthonormality gives
\(\|P_{\widehat{\mathbf z}}\|_{L^2(\mu)}^2
=\|\widehat{\mathbf z}\|_2^2\), while
$\|\mathbf F_j\widehat{\mathbf z}\|_2^2
        =\sum_{x\in\Xc_j}w_{j,x}
        |P_{\widehat{\mathbf z}}(x)|^2$.
The Marcinkiewicz--Zygmund inequality therefore yields
\[
        (1-\varepsilon_j)\|\widehat{\mathbf z}\|_2^2
        \leq\|\mathbf F_j\widehat{\mathbf z}\|_2^2
        \leq(1+\varepsilon_j)\|\widehat{\mathbf z}\|_2^2.
\]
Since
\(\|\mathbf F_j\widehat{\mathbf z}\|_2^2
=\langle\mathbf F_j^*\mathbf F_j\widehat{\mathbf z},
\widehat{\mathbf z}\rangle\), this is equivalent to
\eqref{eq:fourier-gram-bounds}.
\end{proof}

For \(f\in L^2(\Mc,\mu)\), to start the finite transform, we need the spectral vector
\begin{equation*}
        \widehat{\mathbf v}_K
        =
        \left(
        \widehat f_\ell\,
        \overline{\widehat\alpha(\lambda_\ell/2^K)}
        \right)_{\ell\in\Lambda_K^\alpha}
\end{equation*}
of the finest scaling coefficient function
\(v_K(y):=\langle f,\varphi_{K,y}\rangle\).  If only
the weighted nodal values $\mathbf v_K$ of $f$ are available, we compute a
spectral fit by
\[
        \widehat{\mathbf v}_K
        =
        \operatorname*{argmin}_{\mathbf z\in\mathbb C^{\Lambda_K^\alpha}}
        \|\mathbf F_K\mathbf z-\mathbf v_K\|_2^2,
\]
equivalently, $        \mathbf F_K^*\mathbf F_K\widehat{\mathbf v}_K
        =\mathbf F_K^*\mathbf v_K$.
Algorithm~\ref{alg:finest-level-cg} solves it by a conjugate-gradient
iteration.  Here and below, \(\epsilon_{\mathrm{mach}}\) denotes machine
precision.

\begin{algorithm}[htbp]
\caption{Finest-level scaling coefficient computation}
\label{alg:finest-level-cg}
\KwIn{A weighted finest-level scaling sequence \(\mathbf v_K\), operators
\(\mathbf F_K\) and \(\mathbf F_K^*\), tolerance \(\tau>0\), and maximum
iteration number \(M_{\max}\)}
\KwOut{Approximation to
\(\widehat{\mathbf v}_K\in\mathbb C^{\Lambda_K^\alpha}\)}
\BlankLine
\(\mathbf b\leftarrow\mathbf F_K^*\mathbf v_K\),
\(\mathbf z\leftarrow0\)\;
\(\mathbf r\leftarrow\mathbf b\),
\(\mathbf p\leftarrow\mathbf r\)\;
\For{\(m=0,1,\ldots,M_{\max}-1\)}{
        \If{\(\|\mathbf r\|/\max\{\|\mathbf b\|,\epsilon_{\mathrm{mach}}\}\leq\tau\)}{
                \KwRet{\(\widehat{\mathbf v}_K\leftarrow\mathbf z\)}
        }
        \(\mathbf q\leftarrow\mathbf F_K^*\mathbf F_K\mathbf p\)\;
        \(\theta_m\leftarrow\langle\mathbf r,\mathbf r\rangle/
        \langle\mathbf p,\mathbf q\rangle\)\;
        \(\mathbf z\leftarrow\mathbf z+\theta_m\mathbf p\)\;
        \(\mathbf r_{\rm new}\leftarrow\mathbf r-\theta_m\mathbf q\)\;
        \(\delta_m\leftarrow
        \langle\mathbf r_{\rm new},\mathbf r_{\rm new}\rangle/
        \langle\mathbf r,\mathbf r\rangle\)\;
        \(\mathbf p\leftarrow\mathbf r_{\rm new}+\delta_m\mathbf p\),
        \(\mathbf r\leftarrow\mathbf r_{\rm new}\)\;
}
\(\widehat{\mathbf v}_K\leftarrow\mathbf z\)\;
\end{algorithm}

\begin{remark}\label{rem:product}
Let \(C_{\mathrm{prod}}\) be a product-bandwidth constant as in
Remark~\ref{rem:products-diffusion-polynomials}.  Suppose that the
level-\(K\) sampling rule is exact for polynomials of degree up to
\(C_{\mathrm{prod}}2^K\), and that the corresponding product condition holds at
level \(K\).
For \(\widehat{\mathbf z},\widehat{\mathbf y}\in
\mathbb C^{\Lambda_K^\alpha}\), set
\(P_{\widehat{\mathbf z}}:=\sum_{\ell\in\Lambda_K^\alpha}z_\ell u_\ell\)
and
\(P_{\widehat{\mathbf y}}:=\sum_{\ell\in\Lambda_K^\alpha}y_\ell u_\ell\).
These functions belong to \(\Pd_{2^K}\).
Hence exactness applied to \(P_{\widehat{\mathbf z}}\overline{P_{\widehat{\mathbf y}}}\)
gives
\[
 \langle\mathbf F_K\widehat{\mathbf z},
 \mathbf F_K\widehat{\mathbf y}\rangle
 =\sum_{x\in\Xc_K}w_{K,x}P_{\widehat{\mathbf z}}(x)
 \overline{P_{\widehat{\mathbf y}}(x)}
 =\langle\widehat{\mathbf z},\widehat{\mathbf y}\rangle.
\]
Thus \(\mathbf F_K^*\mathbf F_K=I\). Consequently,
Algorithm~\ref{alg:finest-level-cg} is unnecessary, and the finest-level
coefficients can be computed directly as
$\widehat{\mathbf v}_K=\mathbf F_K^*\mathbf v_K$
using the adjoint discrete Fourier transform. This recovers the setting considered in~\cite{wang2020tight}.
\end{remark}
 
\subsection{Filter banks}

The continuous framelet system $\mathrm{CFS}_J(\Psi)$ was described in terms of 
generators $\Psi=\{\alpha;\beta^1,\ldots,\beta^r\}$.
For an actual multilevel algorithm, one also needs a filter bank
\[
        \Theta=\{a;b^1,\ldots,b^r\},
\]
where $a,b^1,\ldots,b^r\in\ell_1(\mathbb Z)$. Here $a$ is the
low-pass filter and $b^1,\ldots,b^r$ are high-pass filters. We write their symbols, using the same
$2\pi$-Fourier convention as above, as $\widehat a(\xi)=\sum_{m\in\mathbb Z}a_me^{-2\pi im\xi}$ and 
$\widehat{b^k}(\xi)=\sum_{m\in\mathbb Z}b_m^ke^{-2\pi im\xi}$.
The role of the filter bank is to connect adjacent scales. The generators and the filters are related
by the refinement relations
\begin{equation}
\label{eq:filter-refinement-equations}
        \widehat\alpha(2\xi)
        =\widehat a(\xi)\widehat\alpha(\xi),
        \qquad
        \widehat{\beta^k}(2\xi)
        =\widehat{b^k}(\xi)\widehat\alpha(\xi),
        \quad k=1,\ldots,r.
\end{equation}
Thus $\alpha$ and $\beta^k$ describe the framelet atoms at absolute scales, while $a$ and $b^k$ describe the transition
from one scale to the next. For example,
\begin{equation}
\label{eq:scaled-filter-refinement-equations}
        \widehat\alpha\left(\frac{\lambda_\ell}{2^{j-1}}\right)
        =\widehat a\left(\frac{\lambda_\ell}{2^j}\right)
          \widehat\alpha\left(\frac{\lambda_\ell}{2^j}\right),\qquad
        \widehat{\beta^k}\left(\frac{\lambda_\ell}{2^{j-1}}\right)
        =\widehat{b^k}\left(\frac{\lambda_\ell}{2^j}\right)
          \widehat\alpha\left(\frac{\lambda_\ell}{2^j}\right).
\end{equation}
We assume that the filters split the energy of every active coefficient:
\begin{equation}
\label{eq:filter-bank-partition}
        \left|\widehat a\left(\frac{\lambda_\ell}{2^j}\right)\right|^2
        +\sum_{k=1}^r\left|\widehat{b^k}\left(\frac{\lambda_\ell}{2^j}\right)\right|^2
        =1,
        \qquad \ell\in\Lambda_j^\alpha,\quad j>J.
\end{equation}
Together, these assumptions provide the algebraic basis of the fully
discrete transform.

\begin{proposition}[Generator conditions from a filter bank]
\label{prop:filter-bank-implies-generator-conditions}
Let \(\Psi=\{\alpha;\beta^1,\ldots,\beta^r\}\subset L_1(\mathbb R)\)
and the filter bank \(\Theta=\{a;b^1,\ldots,b^r\}\subset \ell_1(\mathbb Z)\).  Suppose
that the refinement equations \eqref{eq:filter-refinement-equations}
hold for all \(\xi\in\mathbb R\), that the active spectral partition
\eqref{eq:filter-bank-partition} holds, and that
\[
        |\widehat\alpha(0)|=1,
        \qquad
        \supp\widehat\alpha\cap[0,\infty)\subset [0,1/2].
\]
Then \(\Psi\) satisfies the low-pass limit
\eqref{eq:spectral-lowpass-limit}, the  identity
\eqref{eq:spectral-refinement}, and the support condition
\eqref{eq:filter-unit-support}.
\end{proposition}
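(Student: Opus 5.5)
The plan is to verify the three generator conditions in turn: the support condition \eqref{eq:filter-unit-support} first, then the refinement identity \eqref{eq:spectral-refinement}, and finally the low-pass limit \eqref{eq:spectral-lowpass-limit}. All three come from the refinement equations \eqref{eq:filter-refinement-equations} evaluated at the spectral points $\lambda_\ell/2^j$, using nonnegativity of $\lambda_\ell$ so that only the nonnegative frequency axis matters.

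\emph{Support.} From $\widehat{\beta^k}(2\xi)=\widehat{b^k}(\xi)\widehat\alpha(\xi)$, for $\xi\ge0$ the right side vanishes unless $\xi\in\supp\widehat\alpha\cap[0,\infty)\subset[0,1/2]$. Hence $\widehat{\beta^k}(\zeta)=0$ for $\zeta>1$. Together with $\supp\widehat\alpha\cap[0,\infty)\subset[0,1/2]\subset[0,1]$, this gives \eqref{eq:filter-unit-support}. Strictly, one should work with closures of nonvanishing sets, which is harmless because the refinement equations hold pointwise for all $\xi$.

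\emph{Refinement identity.} Fix $j\ge J$ and $\ell$. Apply \eqref{eq:scaled-filter-refinement-equations} with $j$ replaced by $j+1$:
\[
\widehat\alpha(\lambda_\ell/2^j)=\widehat a(\lambda_\ell/2^{j+1})\widehat\alpha(\lambda_\ell/2^{j+1}),
\qquad
\widehat{\beta^k}(\lambda_\ell/2^j)=\widehat{b^k}(\lambda_\ell/2^{j+1})\widehat\alpha(\lambda_\ell/2^{j+1}).
\]
Taking squared moduli and summing over $k$ gives
\[
|\widehat\alpha(\lambda_\ell/2^j)|^2+\sum_k|\widehat{\beta^k}(\lambda_\ell/2^j)|^2=|\widehat\alpha(\lambda_\ell/2^{j+1})|^2\Big(|\widehat a|^2+\sum_k|\widehat{b^k}|^2\Big)(\lambda_\ell/2^{j+1}).
\]
There are two cases. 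If $\ell\in\Lambda_{j+1}^\alpha$, the bracket equals $1$ by \eqref{eq:filter-bank-partition}, which applies since $j+1>J$, and \eqref{eq:spectral-refinement} follows. If $\ell\notin\Lambda_{j+1}^\alpha$, then $\widehat\alpha(\lambda_\ell/2^{j+1})=0$, so both sides of \eqref{eq:spectral-refinement} vanish: the left side is zero by definition, and the right side is zero by the displayed identity. This case split is the step that needs care, because \eqref{eq:filter-bank-partition} is only assumed on active indices.

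\emph{Low-pass limit.} This is the main obstacle, since $\widehat\alpha$ is only known to lie in $L_1$-Fourier form, and hence is continuous by Riemann--Lebesgue. Continuity at $0$ together with $|\widehat\alpha(0)|=1$ gives $\widehat\alpha(\lambda_\ell/2^j)\to\widehat\alpha(0)$ as $j\to\infty$ for each fixed $\ell$. Therefore $|\widehat\alpha(\lambda_\ell/2^j)|\to1$, which is \eqref{eq:spectral-lowpass-limit}. No filter property is needed for this step beyond the continuity of the Fourier transform of an $L_1$ function.
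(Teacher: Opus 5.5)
Your proof is correct and matches the paper's argument step for step, only in a different order. The three pieces are the same: continuity of $\widehat\alpha$ at $0$ gives the low-pass limit; a case split on whether $\widehat\alpha(\lambda_\ell/2^{j+1})$ vanishes, with \eqref{eq:filter-bank-partition} applied at level $j+1>J$, gives the refinement identity; and $\widehat{\beta^k}(\eta)=\widehat{b^k}(\eta/2)\widehat\alpha(\eta/2)$ gives the support condition. One small point: continuity of the Fourier transform of an $L_1$ function follows from dominated convergence, not from the decay part of Riemann--Lebesgue, though this does not affect the argument.
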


\begin{proof}
Since \(\alpha\in L_1(\mathbb R)\), its Fourier transform is continuous.
For each fixed \(\ell\geq0\), \(\lambda_\ell/2^j\to0\) as
\(j\to\infty\).  Hence
\[
        \lim_{j\to\infty}
        \left|
        \widehat\alpha\!\left(\frac{\lambda_\ell}{2^j}\right)
        \right|
        =
        |\widehat\alpha(0)|
        =
        1,
\]
which is \eqref{eq:spectral-lowpass-limit}.
Next fix \(j\geq J\) and \(\ell\geq0\), and set
\(\xi=\lambda_\ell/2^{j+1}\).  The refinement equations give
\[
        \widehat\alpha\!\left(\frac{\lambda_\ell}{2^j}\right)
        =
        \widehat a(\xi)
        \widehat\alpha\!\left(\frac{\lambda_\ell}{2^{j+1}}\right),
        \qquad
        \widehat{\beta^k}\!\left(\frac{\lambda_\ell}{2^j}\right)
        =
        \widehat{b^k}(\xi)
        \widehat\alpha\!\left(\frac{\lambda_\ell}{2^{j+1}}\right).
\]
If
\(\widehat\alpha(\lambda_\ell/2^{j+1})=0\), then all terms in
\eqref{eq:spectral-refinement} are zero.  Otherwise
\(\ell\in\Lambda_{j+1}^\alpha\), and \eqref{eq:filter-bank-partition},
applied at level \(j+1\), yields
\[ 
        \left|
        \widehat\alpha\!\left(\frac{\lambda_\ell}{2^j}\right)
        \right|^2
        +
        \sum_{k=1}^r
        \left|
        \widehat{\beta^k}\!\left(\frac{\lambda_\ell}{2^j}\right)
        \right|^2  =
        \left(
        |\widehat a(\xi)|^2
        +
        \sum_{k=1}^r|\widehat{b^k}(\xi)|^2
        \right)
        \left|
        \widehat\alpha\!\left(\frac{\lambda_\ell}{2^{j+1}}\right)
        \right|^2
        =
        \left|
        \widehat\alpha\!\left(\frac{\lambda_\ell}{2^{j+1}}\right)
        \right|^2 .
\]
This proves \eqref{eq:spectral-refinement}. Finally, the assumed support of \(\widehat\alpha\) gives
\(\supp\widehat\alpha\cap[0,\infty)\subset[0,1]\).  For each \(k\),
the second refinement equation implies
$\widehat{\beta^k}(\eta)
        =
        \widehat{b^k}(\eta/2)\widehat\alpha(\eta/2),
        \eta\in\mathbb R$.
Thus, on the nonnegative axis,
\(\widehat{\beta^k}(\eta)\neq0\) can occur only when
\(\eta/2\in\supp\widehat\alpha\cap[0,\infty)\subset[0,1/2]\), that is,
only when \(\eta\in[0,1]\).  Hence
\(\supp\widehat{\beta^k}\cap[0,\infty)\subset[0,1]\) for every \(k\),
and \eqref{eq:filter-unit-support} follows.
\end{proof}

\subsection{Analysis}
We next define the level-transition maps used by the filter bank. Let $\mathbf{v}_j=\mathbf{F}_j \widehat{\mathbf{v}}_j$ be a weighted nodal sequence on $\Xc_j$, where $\widehat{\mathbf{v}}_j=\left(\widehat{v}_{j, \ell}\right)_{\ell \in \Lambda_j^\alpha}$. For a filter $h \in \ell_1(\mathbb{Z})$, we define the discrete convolution of $\mathbf{v}_j$ with $h$ by
\[
\left(\mathbf{v}_j *_j h\right)_x:=\sum_{\ell \in \Lambda_j^\alpha} \widehat{v}_{j, \ell} \widehat{h}\left(\frac{\lambda_{\ell}}{2^j}\right) w_{j, x}^{1 / 2} u_{\ell}(x), \quad x \in \Xc_j .
\]
The downsampling operator is defined in the same spectral language: 
\[
\left(\mathbf{v}_j \downarrow_j\right)_x:=\sum_{\ell \in \Lambda_j^\alpha} \widehat{v}_{j, \ell} w_{j-1, x}^{1 / 2} u_{\ell}(x), \quad x \in \Xc_{j-1} .
\]
Thus $\downarrow_j$ is not a deletion of entries; it is a change of weighted sampling nodes from $\Xc_j$ to $\Xc_{j-1}$. Conversely, the upsampling operator from level $j-1$ to level $j$ is defined by
\[
\left(\mathbf{v}_{j-1} \uparrow_j\right)_x:=\sum_{\ell \in \Lambda_{j-1}^\alpha} \widehat{v}_{j-1, \ell} w_{j, x}^{1 / 2} u_{\ell}(x), \quad x \in \Xc_j .
\]

The next result explains why the analysis step is implemented by the conjugate filters. For a filter $h$, let $h^*$ be characterized by $\widehat{h^*}(\xi)=\overline{\widehat{h}(\xi)}$ on real $\xi$.

\begin{theorem}[One-step coefficient decomposition]
\label{thm:one-step-filter-bank-analysis}
Suppose that the generators and filters
satisfy the conditions in Proposition \ref{prop:filter-bank-implies-generator-conditions}. Fix
$f \in L^2(\Mc, \mu)$ and a level $j>J$. Define the coefficient functions
\[
v_j(y):=\left\langle f, \varphi_{j, y}\right\rangle, \quad v_{j-1}(y):=\left\langle f, \varphi_{j-1, y}\right\rangle, \qquad w_{j-1}^k(y):=\langle f, \psi_{j-1, y}^k\rangle, \quad k=1, \ldots, r .
\]
Let $\mathbf{v}_j$ and $\mathbf{v}_{j-1}$ be the weighted nodal sequences of $v_j$ and $v_{j-1}$ on $\Xc_j$ and $\Xc_{j-1}$, respectively, and let $\mathbf{w}_{j-1}^k$ be the weighted nodal sequence of $w_{j-1}^k$ on $\Xc_j$. Then
\[
\mathbf{v}_{j-1}=\left(\mathbf{v}_j *_j a^*\right) \downarrow_j, \qquad \mathbf{w}_{j-1}^k=\mathbf{v}_j *_j\left(b^k\right)^*, \quad k=1, \ldots, r .
\]
\end{theorem}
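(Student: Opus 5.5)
The proof is a direct spectral computation. It compares, coefficient by coefficient in the eigenbasis $\{u_\ell\}$, the spectral vectors on both sides of each identity. Everything in the filter-bank operators $*_j$, $\downarrow_j$ is defined through the spectral vector $\widehat{\mathbf v}_j$, so I first identify it. By \eqref{eq:framelet-coefficient-expansions}, $v_j=\sum_\ell \widehat f_\ell\,\overline{\widehat\alpha(\lambda_\ell/2^j)}\,u_\ell$. Its nonzero coefficients are indexed by $\Lambda_j^\alpha$, so $\mathbf v_j=\mathbf F_j\widehat{\mathbf v}_j$ with $\widehat v_{j,\ell}=\widehat f_\ell\,\overline{\widehat\alpha(\lambda_\ell/2^j)}$. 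Analogously, the spectral coefficients of $v_{j-1}$ are $\widehat f_\ell\,\overline{\widehat\alpha(\lambda_\ell/2^{j-1})}$, and those of $w^k_{j-1}$ are $\widehat f_\ell\,\overline{\widehat{\beta^k}(\lambda_\ell/2^{j-1})}$.

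Next I compute $\mathbf v_j*_j a^*$. Since $\widehat{a^*}=\overline{\widehat a}$ on the real axis and $\lambda_\ell/2^j$ is real, its $x$-entry is
\[
\sum_{\ell\in\Lambda_j^\alpha}\widehat f_\ell\,\overline{\widehat a(\lambda_\ell/2^j)\widehat\alpha(\lambda_\ell/2^j)}\,w_{j,x}^{1/2}u_\ell(x)
=\sum_{\ell\in\Lambda_j^\alpha}\widehat f_\ell\,\overline{\widehat\alpha(\lambda_\ell/2^{j-1})}\,w_{j,x}^{1/2}u_\ell(x),
\]
where the equality is the scaled refinement relation \eqref{eq:scaled-filter-refinement-equations}. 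The same computation with $b^k$ gives the spectral coefficients $\widehat f_\ell\,\overline{\widehat{\beta^k}(\lambda_\ell/2^{j-1})}$ on $\Lambda_j^\alpha$, evaluated with weights $w_{j,x}$ on $\Xc_j$.

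The main step is to check that restricting the sum to $\Lambda_j^\alpha$ loses nothing. For each $\ell\notin\Lambda_j^\alpha$ we have $\widehat\alpha(\lambda_\ell/2^j)=0$, so by the refinement relations both $\widehat\alpha(\lambda_\ell/2^{j-1})=0$ and $\widehat{\beta^k}(\lambda_\ell/2^{j-1})=0$. Hence the sum over $\Lambda_j^\alpha$ equals the full eigen-expansion of $v_{j-1}$, respectively $w^k_{j-1}$. In particular the high-pass identity follows immediately: the result is exactly the weighted nodal sequence of $w^k_{j-1}$ on $\Xc_j$, which is the convention stated for $\mathbf w^k_{j-1}$.

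For the low-pass identity I apply $\downarrow_j$. By definition it keeps the spectral vector indexed by $\Lambda_j^\alpha$ and re-evaluates with weights $w_{j-1,x}^{1/2}$ at $x\in\Xc_{j-1}$. This gives $\sum_{\ell}\widehat f_\ell\overline{\widehat\alpha(\lambda_\ell/2^{j-1})}w_{j-1,x}^{1/2}u_\ell(x)=w_{j-1,x}^{1/2}v_{j-1}(x)$, which is $\mathbf v_{j-1}$. If one insists that $\mathbf v_{j-1}=\mathbf F_{j-1}\widehat{\mathbf v}_{j-1}$ be indexed by $\Lambda_{j-1}^\alpha$, the extra indices in $\Lambda_j^\alpha\setminus\Lambda_{j-1}^\alpha$ carry zero coefficients, by the definition of $\Lambda_{j-1}^\alpha$, so the two representations agree. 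The only delicate point is this bookkeeping of index sets and the conjugation convention for $h^*$. No MZ estimate is needed, and the identities hold exactly.
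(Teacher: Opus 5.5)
Your proposal is correct and follows essentially the same route as the paper's proof: identify the spectral coefficients $\widehat f_\ell\,\overline{\widehat\alpha(\lambda_\ell/2^j)}$ of $v_j$, apply the scaled refinement relations coefficientwise on $\Lambda_j^\alpha$, and note that all coefficients outside $\Lambda_j^\alpha$ vanish. Then evaluate the low-pass branch on $\Xc_{j-1}$ with weights $w_{j-1,x}$ via $\downarrow_j$, and each high-pass branch on $\Xc_j$ with weights $w_{j,x}$; your extra remark that the indices in $\Lambda_j^\alpha\setminus\Lambda_{j-1}^\alpha$ carry zero coefficients only makes explicit a bookkeeping point the paper leaves implicit.
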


\begin{proof}
By the coefficient expansion \eqref{eq:framelet-coefficient-expansions},
the spectral coefficients of $v_j$ are 
$\widehat v_{j,\ell}
        =
        \widehat f_\ell\,
        \overline{\widehat\alpha\left({\lambda_\ell}/{2^j}\right)}$,
$\ell\in\Lambda_j^\alpha$.
The scaled refinement equations \eqref{eq:scaled-filter-refinement-equations}
therefore give, for every $\ell\in\Lambda_j^\alpha$,
\[
 \widehat v_{j,\ell}\,
 \overline{\widehat a\left(\frac{\lambda_\ell}{2^j}\right)}=
 \widehat f_\ell\,
 \overline{\widehat\alpha\left(\frac{\lambda_\ell}{2^{j-1}}\right)},\qquad
 \widehat v_{j,\ell}\,
 \overline{\widehat{b^k}\left(\frac{\lambda_\ell}{2^j}\right)}=
 \widehat f_\ell\,
 \overline{\widehat{\beta^k}\left(\frac{\lambda_\ell}{2^{j-1}}\right)},
 \qquad k=1,\ldots,r.
\]
For $\ell\notin\Lambda_j^\alpha$, the scaled refinement equations show
that both coarse- and high-pass coefficients on the right vanish as well.
Thus these identities determine all the spectral coefficients of the
corresponding coefficient functions $v_{j-1}(y)=\langle f,\varphi_{j-1,y}\rangle$ and $w^k_{j-1}(y) = \langle f,\psi^k_{j-1,y}\rangle$.  Evaluating the first one on
$\Xc_{j-1}$ with the weights $w_{j-1,x}$ yields $\left(\mathbf v_j*_j a^*\right)\downarrow_j
        =\mathbf v_{j-1}$,
while evaluating the second identity on $\Xc_j$ with the weights
$w_{j,x}$ yields $\mathbf v_j*_j(b^k)^*=\mathbf w_{j-1}^k$ for $k=1,\ldots,r$.
\end{proof}

Starting from the finest-level vector $\widehat{\mathbf v}_K$ computed by
Algorithm~\ref{alg:finest-level-cg}, we repeat the one-step decomposition
for $j=K,K-1,\ldots,J+1$.  The restriction
\(\mathcal R_j^{j-1}:\mathbb C^{\Lambda_j^\alpha}\to
\mathbb C^{\Lambda_{j-1}^\alpha}\) is taken onto
\(\Lambda_{j-1}^\alpha\), that is,
\[
 \bigl(\mathcal R_j^{j-1}\widehat{\mathbf z}\bigr)_\ell=z_\ell,
 \qquad \ell\in\Lambda_{j-1}^\alpha.
\]
Algorithm~\ref{alg:multi-level-fmt-analysis} first performs the spectral filtering and then
stores each branch on its prescribed weighted node set.

\begin{algorithm}[htbp]
\caption{Multi-level coefficient analysis}
\label{alg:multi-level-fmt-analysis}
\KwIn{Finest-level scaling vector
\(\widehat{\mathbf v}_K\in\mathbb C^{\Lambda_K^\alpha}\), filters
\(a,b^1,\ldots,b^r\), and levels \(J<K\)}
\KwOut{Coarse sequence \(\mathbf v_J\) and detail sequences
\(\mathbf w_{j-1}^k\), \(j=J+1,\ldots,K\), \(k=1,\ldots,r\)}
\BlankLine
\For{\(j=K,K-1,\ldots,J+1\)}{
        \For{\(k=1,\ldots,r\)}{
                \(\widehat{\mathbf w}_{j-1}^k
                \leftarrow
                \widehat{\mathbf v}_j
                \overline{\widehat{b^k}(2^{-j}\lambda)}\)\;
                \(\mathbf w_{j-1}^k
                \leftarrow\mathbf F_j
                \widehat{\mathbf w}_{j-1}^k\)\;
        }
        \(\widehat{\mathbf v}_{j-1}
        \leftarrow\mathcal R_j^{j-1}
        \bigl(\widehat{\mathbf v}_j
        \overline{\widehat a(2^{-j}\lambda)}\bigr)\)\;
}
\(\mathbf v_J\leftarrow\mathbf F_J\widehat{\mathbf v}_J\)\;
\end{algorithm}

Let \(\mathcal A_{J,K}:\mathbb C^{\Lambda_K^\alpha}
 \rightarrow
 \mathbb C^{\Xc_J}\times
 \prod_{j=J+1}^K\bigl(\mathbb C^{\Xc_j}\bigr)^r\) be the linear map implemented by
Algorithm~\ref{alg:multi-level-fmt-analysis}, namely
\[
 \mathcal A_{J,K}\widehat{\mathbf v}_K=
 \left(\mathbf v_J,\{\mathbf w_{j-1}^k\}\right).
\]
Equip the domain and codomain with their standard Euclidean inner
products.  The adjoint \(\mathcal A_{J,K}^*\) is therefore the unique
linear map satisfying
\[
 \langle\mathcal A_{J,K}\widehat{\mathbf v}_K,\mathbf c\rangle
 =\langle\widehat{\mathbf v}_K,
 \mathcal A_{J,K}^*\mathbf c\rangle
\]
for every \(\widehat{\mathbf v}_K\) and coefficient collection
\(\mathbf c\).
For an oversampling shift \(s\geq0\), the operator
\(\mathcal A_{J,K}^{(s)}\) is defined by the same spectral filtering
steps, with \(\mathbf F_J\) and \(\mathbf F_j\) replaced by
\(\mathbf F_J^{(s)}\) and \(\mathbf F_j^{(s)}\), respectively.

\subsection{Synthesis}
We now synthesize a finest-level scaling vector from a coefficient
collection
\[
        \mathbf c=\left(\mathbf c_J,
        \{\mathbf c_{j-1}^k:j=J+1,\ldots,K,\ k=1,\ldots,r\}\right),
\]
using the adjoint filter bank. Define the zero-extension
map \(\mathcal E_{j-1}^j:\mathbb C^{\Lambda_{j-1}^\alpha}\to
\mathbb C^{\Lambda_j^\alpha}\) by
\[
 \bigl(\mathcal E_{j-1}^j\widehat{\mathbf z}\bigr)_\ell
 =\begin{cases}
 z_\ell,&\ell\in\Lambda_{j-1}^\alpha,\\
 0,&\ell\in\Lambda_j^\alpha\setminus\Lambda_{j-1}^\alpha.
 \end{cases}
\] 
It is the Euclidean adjoint of the restriction operator
\(\mathcal R_j^{j-1}\) in the analysis step.
The recursion for the Euclidean adjoint
\(\mathcal A_{J,K}^*\) is
\[
        \widehat{\mathbf z}_J:=\mathbf F_J^*\mathbf c_J,\qquad
        \widehat{\mathbf z}_j
        :={\mathcal E}_{j-1}^j\widehat{\mathbf z}_{j-1}
        \widehat a(2^{-j}\lambda_{\cdot})
        +\sum_{k=1}^r
        \bigl(\mathbf F_j^*\mathbf c_{j-1}^k\bigr)
        \widehat{b^k}(2^{-j}\lambda_{\cdot}),
        \quad j=J+1,\ldots,K.
\]
This recursion is obtained by taking the Euclidean adjoint of each step
of Algorithm~\ref{alg:multi-level-fmt-analysis}: evaluation by
\(\mathbf F_j\) becomes \(\mathbf F_j^*\), while multiplication by
the conjugate filters becomes multiplication by \(\widehat a\) and
\(\widehat{b^k}\). It is described in the following
Algorithm~\ref{alg:multi-level-fmt-synthesis}, computing
\(\mathcal A_{J,K}^*\mathbf c\).

\begin{algorithm}[htbp]
\caption{Multi-level coefficient one-pass synthesis}
\label{alg:multi-level-fmt-synthesis}
\KwIn{Coarse sequence \(\mathbf c_J\), detail sequences
\(\mathbf c_{j-1}^k\), filters \(a,b^1,\ldots,b^r\), and levels \(J<K\)}
\KwOut{One-pass finest-level spectral vector
\(\widehat{\mathbf z}_K\)}
\BlankLine
\(\widehat{\mathbf z}_J
\leftarrow\mathbf F_J^*\mathbf c_J\)\;
\For{\(j=J+1,J+2,\ldots,K\)}{
        \(\widehat{\mathbf z}_j
        \leftarrow
        \mathcal E_{j-1}^j
        \widehat{\mathbf z}_{j-1}
        \widehat a(2^{-j}\lambda_{\cdot})\)\;
        \For{\(k=1,\ldots,r\)}{
                \(\widehat{\mathbf e}_{j-1}^k
                \leftarrow\mathbf F_j^*\mathbf c_{j-1}^k\)\;
                \(\widehat{\mathbf z}_j
                \leftarrow
                \widehat{\mathbf z}_j+
                \widehat{\mathbf e}_{j-1}^k
                \widehat{b^k}(2^{-j}\lambda_{\cdot})\)\;
        }
}
\end{algorithm}

When the sampling rules have sufficient algebraic exactness, the adjoint
filter-bank pass is already an exact inverse; no iterative correction is
needed.  More precisely, we have the following result. 

\begin{proposition}[Exact one-pass synthesis]
\label{prop:exact-one-pass-synthesis}
Suppose that the refinement relations
\eqref{eq:filter-refinement-equations} and the filter-bank partition
\eqref{eq:filter-bank-partition} hold.  Let \(C_{\mathrm{prod}}\) be a
product-bandwidth constant as in
Remark~\ref{rem:products-diffusion-polynomials}.  Assume, for every
\(j=J,\ldots,K\), that the level-\(j\) sampling rule is exact for
polynomials of degree up to \(C_{\mathrm{prod}}2^j\), and that the corresponding
product condition holds.  Then
\begin{equation}
\label{eq:exact-coefficient-reconstruction}
        \mathcal A_{J,K}^*\mathcal A_{J,K}=I
        \quad\text{on }\mathbb C^{\Lambda_K^\alpha}.
\end{equation}
Consequently, if
\(\mathbf c=\mathcal A_{J,K}\widehat{\mathbf v}_K\), then
Algorithm~\ref{alg:multi-level-fmt-synthesis} returns
\(\widehat{\mathbf v}_K\) after one pass.
\end{proposition}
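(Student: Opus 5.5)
The plan is to compute \(\mathcal A_{J,K}^*\mathcal A_{J,K}\) by telescoping over levels, using exactness only through the identity \(\mathbf F_j^*\mathbf F_j=I\) on \(\mathbb C^{\Lambda_j^\alpha}\). That identity is obtained for each \(j=J,\ldots,K\) exactly as in Remark~\ref{rem:product}, with \(K\) replaced by \(j\). For \(\widehat{\mathbf z},\widehat{\mathbf y}\in\mathbb C^{\Lambda_j^\alpha}\), the functions \(P_{\widehat{\mathbf z}},P_{\widehat{\mathbf y}}\) lie in \(\Pd_{2^j}\). Their product \(P_{\widehat{\mathbf z}}\overline{P_{\widehat{\mathbf y}}}\) lies in \(\Pd_{C_{\mathrm{prod}}2^j}\) by the product condition, and exactness then gives \(\langle\mathbf F_j\widehat{\mathbf z},\mathbf F_j\widehat{\mathbf y}\rangle=\langle\widehat{\mathbf z},\widehat{\mathbf y}\rangle\). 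The same argument applies to the evaluation map used in the downsampled branch: \(\widehat{\mathbf v}_{j-1}\) is ultimately evaluated by \(\mathbf F_{j-1}\) on \(\Lambda_{j-1}^\alpha\), so no extra map arises.

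Next I will write the analysis operator explicitly in spectral form. For \(\widehat{\mathbf v}_K\), Algorithm~\ref{alg:multi-level-fmt-analysis} produces \(\widehat{\mathbf v}_{j-1}=\mathcal R_j^{j-1}(\overline{\widehat a_j}\,\widehat{\mathbf v}_j)\) and \(\mathbf w_{j-1}^k=\mathbf F_j(\overline{\widehat{b^k_j}}\,\widehat{\mathbf v}_j)\), where \(\widehat a_j\) denotes the diagonal multiplier \(\widehat a(2^{-j}\lambda_\ell)\) on \(\Lambda_j^\alpha\). Let \(T_j\) be the map \(\widehat{\mathbf v}_j\mapsto(\widehat{\mathbf v}_{j-1},\{\mathbf w_{j-1}^k\}_k)\). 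Then, using \(\mathbf F_j^*\mathbf F_j=I\), the adjoint composition is
\[
T_j^*T_j=\overline{\widehat a_j}^{\,*}\mathcal E_{j-1}^j\mathcal R_j^{j-1}\overline{\widehat a_j}+\sum_{k=1}^r|\widehat{b^k_j}|^2 .
\]
The main obstacle is the cut-off \(\mathcal E_{j-1}^j\mathcal R_j^{j-1}\), which is not the identity on \(\Lambda_j^\alpha\). I would handle it by showing that \(\widehat a(2^{-j}\lambda_\ell)=0\) for \(\ell\in\Lambda_j^\alpha\setminus\Lambda_{j-1}^\alpha\). This follows from the scaled refinement equation \eqref{eq:scaled-filter-refinement-equations}: for such \(\ell\), \(\widehat\alpha(\lambda_\ell/2^{j-1})=\widehat a(\lambda_\ell/2^j)\,\widehat\alpha(\lambda_\ell/2^j)\) vanishes while \(\widehat\alpha(\lambda_\ell/2^j)\neq0\). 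Hence the cut-off acts trivially after multiplication by \(\overline{\widehat a_j}\), and \(T_j^*T_j=|\widehat a_j|^2+\sum_k|\widehat{b^k_j}|^2=I\) on \(\mathbb C^{\Lambda_j^\alpha}\) by the filter-bank partition \eqref{eq:filter-bank-partition}.

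Finally, \(\mathcal A_{J,K}\) is the composition of the coarsest-level evaluation \(\mathbf F_J\) with \(T_{J+1},\dots,T_K\), where the detail outputs are passed through unchanged. I would then argue by induction on \(K-J\), or equivalently by checking that the synthesis recursion of Algorithm~\ref{alg:multi-level-fmt-synthesis} is the adjoint applied stage by stage. This gives \(\|\mathcal A_{J,K}\widehat{\mathbf v}_K\|^2=\|\mathbf F_J\widehat{\mathbf v}_J\|^2+\sum_{j,k}\|\mathbf w_{j-1}^k\|^2=\|\widehat{\mathbf v}_K\|^2\), and polarization yields \(\mathcal A_{J,K}^*\mathcal A_{J,K}=I\). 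The final claim is immediate: Algorithm~\ref{alg:multi-level-fmt-synthesis} computes \(\mathcal A_{J,K}^*\mathbf c\), which equals \(\widehat{\mathbf v}_K\) when \(\mathbf c=\mathcal A_{J,K}\widehat{\mathbf v}_K\).
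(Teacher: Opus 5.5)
Your proposal is correct and follows essentially the same argument as the paper. Both proofs obtain $\mathbf F_j^*\mathbf F_j=I$ from exactness plus the product condition, use the refinement relation to show that $\widehat a(2^{-j}\lambda_\ell)$ vanishes on $\Lambda_j^\alpha\setminus\Lambda_{j-1}^\alpha$, apply the partition identity to make each level step an isometry, and then compose. The only difference is cosmetic: the paper factors $\mathcal A_{J,K}=\mathcal D_{J,K}\mathcal W_{J,K}$ into a spectral filter-bank isometry followed by a block-diagonal evaluation isometry, whereas you fold $\mathbf F_j$ into each level map $T_j$ and compute $T_j^*T_j$ directly.
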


\begin{proof}
Recall that in this case, \(\mathbf F_j^*\mathbf F_j=I\) on
\(\mathbb C^{\Lambda_j^\alpha}\) for every \(j\), including the
coarsest branch at level \(J\); see the argument in Remark \ref{rem:product}. We consider one spectral filter-bank step.  The low-pass branch has
coefficients \(\widehat{\mathbf v}_j\overline{\widehat a}\), restricted
to \(\Lambda_{j-1}^\alpha\), and the $k$th detail branch has coefficients
\(\widehat{\mathbf v}_j\overline{\widehat{b^k}}\).  The refinement
relation makes the low-pass product vanish outside
\(\Lambda_{j-1}^\alpha\).  Therefore the partition identity gives
\[
 \|\widehat{\mathbf v}_j\|_2^2
 =\|\widehat{\mathbf v}_{j-1}\|_2^2+
 \sum_{k=1}^r\|\widehat{\mathbf w}_{j-1}^k\|_2^2.
\]
Iterating this identity shows that the spectral filter-bank map
\[
 \mathcal W_{J,K}:
 \mathbb C^{\Lambda_K^\alpha}
 \longrightarrow
 \mathbb C^{\Lambda_J^\alpha}
 \oplus
 \bigoplus_{j=J+1}^K
 \bigl(\mathbb C^{\Lambda_j^\alpha}\bigr)^r,
 \qquad
 \widehat{\mathbf v}_K
 \longmapsto
 \left(\widehat{\mathbf v}_J,
 \{\widehat{\mathbf w}_{j-1}^k\}\right),
\]
is an isometry, so \(\mathcal W_{J,K}^*\mathcal W_{J,K}=I\).
Let
\[
 \mathcal D_{J,K}:
 \mathbb C^{\Lambda_J^\alpha}
 \oplus
 \bigoplus_{j=J+1}^K
 \bigl(\mathbb C^{\Lambda_j^\alpha}\bigr)^r
 \longrightarrow
 \mathbb C^{\Xc_J}
 \oplus
 \bigoplus_{j=J+1}^K
 \bigl(\mathbb C^{\Xc_j}\bigr)^r
\]
be the block-diagonal map that applies \(\mathbf F_J\) to the coarse
branch and \(\mathbf F_j\) to each level-\(j\) detail branch.  The fact \(\mathbf F_j^*\mathbf F_j=I\)
 shows that every block is an isometry, hence
\(\mathcal D_{J,K}^*\mathcal D_{J,K}=I\).

Since the analysis operator
implemented by Algorithm~\ref{alg:multi-level-fmt-analysis} factors as
\(\mathcal A_{J,K}=\mathcal D_{J,K}\mathcal W_{J,K}\), we obtain
\[
        \mathcal A_{J,K}^*\mathcal A_{J,K}
        =\mathcal W_{J,K}^*\mathcal D_{J,K}^*
        \mathcal D_{J,K}\mathcal W_{J,K}
        =\mathcal W_{J,K}^*\mathcal W_{J,K}
        =I
        \quad\text{on }\mathbb C^{\Lambda_K^\alpha}.
\]
The conclusion follows because the one-pass algorithm computes
\(\mathcal A_{J,K}^*\mathbf c\).
\end{proof}

For Marcinkiewicz--Zygmund sampling, the weighted Fourier maps are
well conditioned but generally not isometries.  Hence one-pass synthesis
is still the adjoint operation, but
\(\mathcal A_{J,K}^*\mathcal A_{J,K}\neq I\) in general.  To recover
the canonical coefficient vector, we instead solve
\begin{equation*}
        \widehat{\mathbf u}_K
        =\operatorname*{argmin}_{\widehat{\mathbf z}\in
        \mathbb C^{\Lambda_K^\alpha}}
        \frac12\|\mathcal A_{J,K}\widehat{\mathbf z}-\mathbf c\|_2^2.
\end{equation*}
Its normal equation, involving the coefficient frame operator $\mathbf S_{J,K}:=\mathcal A_{J,K}^*\mathcal A_{J,K}$, is given by
\begin{equation*}
        \mathbf S_{J,K}\widehat{\mathbf u}_K
        =\mathcal A_{J,K}^*\mathbf c.
\end{equation*}
Algorithm~\ref{alg:canonical-cg-reconstruction} applies conjugate
gradients to this normal equation on the full coefficient space.

\begin{algorithm}[htbp]
\caption{Coefficient-space frame-operator CG reconstruction}
\label{alg:canonical-cg-reconstruction}
\KwIn{Coefficient collection \(\mathbf c\), tolerance \(\tau>0\), and
maximum iteration number \(M_{\max}\)}
\KwOut{Canonical finest-level coefficient vector
\(\widehat{\mathbf u}_K\in\mathbb C^{\Lambda_K^\alpha}\)}
\BlankLine
\(\mathbf b\leftarrow\mathcal A_{J,K}^*\mathbf c\),
\(\widehat{\mathbf u}_K\leftarrow0\)\;
\(\mathbf r\leftarrow\mathbf b\), \(\mathbf p\leftarrow\mathbf r\)\;
\For{\(m=0,1,\ldots,M_{\max}-1\)}{
        \If{\(\|\mathbf r\|/\max\{\|\mathbf b\|,\epsilon_{\mathrm{mach}}\}\leq\tau\)}{
                \KwRet{\(\widehat{\mathbf u}_K\)}
        }
        \(\mathbf q\leftarrow
        \mathcal A_{J,K}^*\mathcal A_{J,K}\mathbf p\)\;
        \(\theta_m\leftarrow\langle\mathbf r,\mathbf r\rangle/
        \langle\mathbf p,\mathbf q\rangle\)\;
        \(\widehat{\mathbf u}_K\leftarrow
        \widehat{\mathbf u}_K+\theta_m\mathbf p\)\;
        \(\mathbf r_{\rm new}\leftarrow\mathbf r-\theta_m\mathbf q\)\;
        \(\delta_m\leftarrow
        \langle\mathbf r_{\rm new},\mathbf r_{\rm new}\rangle/
        \langle\mathbf r,\mathbf r\rangle\)\;
        \(\mathbf p\leftarrow\mathbf r_{\rm new}+\delta_m\mathbf p\),
        \(\mathbf r\leftarrow\mathbf r_{\rm new}\)\;
}
\end{algorithm}

The Marcinkiewicz--Zygmund bounds do more than ensure that the least-squares problem is
well posed. They quantify the conditioning of its normal equation and
therefore the convergence of conjugate gradients.

\begin{proposition}[Finite-level near-tightness and CG reconstruction]\label{prop:finite-coefficient-frame-bounds}
Assume that the refinement relations
\eqref{eq:filter-refinement-equations} hold and that
\eqref{eq:filter-bank-partition} holds for \(j=J+1,\ldots,K\).
\begin{enumerate}[label=\textup{(\roman*)}]
\item For the fixed-tolerance nested Marcinkiewicz--Zygmund measures of
Theorem~\ref{thm:fixed-tolerance-mz}, with $p=2$ and tolerance
$0<\eta<1$,
\[
 (1-\eta)\|\widehat{\mathbf z}\|_2^2
 \leq\|\mathcal A_{J,K}\widehat{\mathbf z}\|_2^2
 \leq(1+\eta)\|\widehat{\mathbf z}\|_2^2
 \quad\text{for all }\widehat{\mathbf z}\in\mathbb C^{\Lambda_K^\alpha}.
\]
\item In the oversampled setting of
Theorem~\ref{thm:oversampled-framelets}, with
\(\rho_s=C_\Mc2^{-s}<1\), the corresponding operator
\(\mathcal A_{J,K}^{(s)}\) satisfies the same inequality with
\(\eta\) replaced by \(\rho_s\).
\end{enumerate}
Both conclusions remain valid after restriction to a spectral subspace.
In either case, let \(\mathcal T=\mathcal A_{J,K}\) in \textup{(i)}
and \(\mathcal T=\mathcal A_{J,K}^{(s)}\) in \textup{(ii)}, and write
\(\varepsilon=\eta\) or \(\varepsilon=\rho_s\), respectively.  Then
\(\mathbf S_{\mathcal T}:=\mathcal T^*\mathcal T\) is a positive-definite
self-adjoint operator on \(\mathbb C^{\Lambda_K^\alpha}\).
Let
\(\lambda_{\min}(\mathbf S_{\mathcal T})\) and
\(\lambda_{\max}(\mathbf S_{\mathcal T})\)
denote its smallest and largest eigenvalues, respectively.  Its spectral
condition number is
\begin{equation}
\label{eq:coefficient-frame-condition-number}
        \kappa(\mathbf S_{\mathcal T})
        :=
        \frac{\lambda_{\max}(\mathbf S_{\mathcal T})}
             {\lambda_{\min}(\mathbf S_{\mathcal T})}
        \leq
        \frac{1+\varepsilon}{1-\varepsilon}.
\end{equation}
Denote by
\(\|\widehat{\mathbf z}\|_{\mathbf S_{\mathcal T}}
:=\langle\mathbf S_{\mathcal T}\widehat{\mathbf z},
\widehat{\mathbf z}\rangle^{1/2}\).
If \(\widehat{\mathbf u}_K\) solves
\(\mathbf S_{\mathcal T}\widehat{\mathbf u}_K=\mathcal T^*\mathbf c\)
and \(\widehat{\mathbf u}_K^{(m)}\) is the \(m\)th CG iterate (using
\(\mathcal T\) in place of \(\mathcal A_{J,K}\) in
Algorithm~\ref{alg:canonical-cg-reconstruction}), then
\begin{equation}
\label{eq:coefficient-cg-convergence}
 \|\widehat{\mathbf u}_K^{(m)}-\widehat{\mathbf u}_K\|_{\mathbf S_{\mathcal T}}
 \leq 2\left(
 \frac{\sqrt{\kappa(\mathbf S_{\mathcal T})}-1}
 {\sqrt{\kappa(\mathbf S_{\mathcal T})}+1}\right)^m
 \|\widehat{\mathbf u}_K^{(0)}-\widehat{\mathbf u}_K\|_{\mathbf S_{\mathcal T}}.
\end{equation}
\end{proposition}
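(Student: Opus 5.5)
The proof will reuse the factorization from the proof of Proposition~\ref{prop:exact-one-pass-synthesis}: $\mathcal A_{J,K}=\mathcal D_{J,K}\mathcal W_{J,K}$. Here $\mathcal W_{J,K}$ is the purely spectral filter-bank map and $\mathcal D_{J,K}$ is the block-diagonal map applying $\mathbf F_J$ to the coarse branch and $\mathbf F_j$ to each level-$j$ detail branch. The only ingredients used to show that $\mathcal W_{J,K}$ is an isometry were the refinement relations \eqref{eq:filter-refinement-equations}, the partition identity \eqref{eq:filter-bank-partition}, and the nesting $\Lambda_{j-1}^\alpha\subset\Lambda_j^\alpha$. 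None of these involves quadrature exactness, so we still have
\[
\|\mathcal W_{J,K}\widehat{\mathbf z}\|_2^2=\|\widehat{\mathbf z}\|_2^2 .
\]
One point needs care: the low-pass product $\widehat{\mathbf v}_j\overline{\widehat a(2^{-j}\lambda)}$ must vanish off $\Lambda_{j-1}^\alpha$, so that the restriction $\mathcal R_j^{j-1}$ loses no energy. This follows from \eqref{eq:scaled-filter-refinement-equations}. Indeed, if $\ell\in\Lambda_j^\alpha\setminus\Lambda_{j-1}^\alpha$, then $\widehat a(\lambda_\ell/2^j)\widehat\alpha(\lambda_\ell/2^j)=\widehat\alpha(\lambda_\ell/2^{j-1})=0$ with $\widehat\alpha(\lambda_\ell/2^j)\neq0$, which forces $\widehat a(\lambda_\ell/2^j)=0$.

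Next, we bound $\mathcal D_{J,K}$ blockwise using Proposition~\ref{prop:fourier-gram-bounds}. In case (i), Theorem~\ref{thm:fixed-tolerance-mz} with $p=2$ gives the level-$j$ rule $L^2$ MZ constants $1\pm\eta$ on $\Pd_{2^j}$, and every branch vector at level $j$ is supported on $\Lambda_j^\alpha\subset\{\lambda_\ell\le 2^j\}$. So each block satisfies $(1-\eta)I\le\mathbf F_j^*\mathbf F_j\le(1+\eta)I$. Summing over the blocks gives $(1-\eta)\|\mathbf y\|^2\le\|\mathcal D_{J,K}\mathbf y\|^2\le(1+\eta)\|\mathbf y\|^2$, and composing with the isometry $\mathcal W_{J,K}$ gives (i). In case (ii), the block at level $j$ is $\mathbf F_j^{(s)}$, which samples polynomials of bandwidth $2^j$ on $\Xc_{j+s}$. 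Since $2^jh_{\Xc_{j+s}}\le\gamma2^{-s}$, Proposition~\ref{prop:filbir-mhaskar} with $p=2$, $\kappa=1$ gives tolerance at most $(2\gamma/c)2^{-s}=\rho_s$, exactly as in the proof of Theorem~\ref{thm:oversampled-framelets}. The same composition argument then yields (ii). For the restriction to a spectral subspace $V\subset\mathbb C^{\Lambda_K^\alpha}$, the two-sided inequality holds for every vector, so in particular for every vector of $V$; the compressed operator $P_V\mathcal T^*\mathcal T|_V$ obeys the same bounds.

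The operator claims are then routine. $\mathbf S_{\mathcal T}=\mathcal T^*\mathcal T$ is self-adjoint, and $\langle\mathbf S_{\mathcal T}\widehat{\mathbf z},\widehat{\mathbf z}\rangle=\|\mathcal T\widehat{\mathbf z}\|^2\in[(1-\varepsilon)\|\widehat{\mathbf z}\|^2,(1+\varepsilon)\|\widehat{\mathbf z}\|^2]$. By the Rayleigh quotient characterization, this gives $\lambda_{\min}\ge1-\varepsilon>0$ and $\lambda_{\max}\le1+\varepsilon$, hence positive definiteness and \eqref{eq:coefficient-frame-condition-number}. Finally, Algorithm~\ref{alg:canonical-cg-reconstruction} is the standard conjugate gradient method applied to the SPD system $\mathbf S_{\mathcal T}\widehat{\mathbf u}=\mathcal T^*\mathbf c$. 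The classical Chebyshev-polynomial energy-norm estimate for CG then gives \eqref{eq:coefficient-cg-convergence} directly.

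The main obstacle I expect is the bookkeeping in the first step. We must confirm that each detail branch at level $j$ really lies in the index set $\Lambda_j^\alpha$ on which the level-$j$ (or level-$(j+s)$) MZ bound applies, and that the truncation to $\Lambda_{j-1}^\alpha$ is lossless, so that $\mathcal W_{J,K}$ is exactly isometric rather than merely contractive.
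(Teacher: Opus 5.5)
Your proof is correct and follows essentially the paper's argument: the paper also iterates the one-level filter-bank energy identity, which is exactly the isometry of $\mathcal W_{J,K}$. It then applies the levelwise $L^2$ Marcinkiewicz--Zygmund bounds to the coarse and detail branches, sums them to get $(1-\varepsilon)I\le\mathbf S_{\mathcal T}\le(1+\varepsilon)I$, and quotes the standard CG estimate. The differences are only in packaging: you use the factorization $\mathcal A_{J,K}=\mathcal D_{J,K}\mathcal W_{J,K}$ with Proposition~\ref{prop:fourier-gram-bounds} applied blockwise, and in case (ii) you invoke Proposition~\ref{prop:filbir-mhaskar} directly where the paper cites Theorem~\ref{thm:fixed-degree-refinement}; both give the same tolerance $\rho_s$.
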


\begin{proof}
For \(\widehat{\mathbf z}\in\mathbb C^{\Lambda_K^\alpha}\), iterating
the one-level filter-bank energy identity gives
\[
 \|\widehat{\mathbf z}\|_2^2
 =\|\widehat{\mathbf v}_J\|_2^2
 +\sum_{j=J+1}^K\sum_{k=1}^r
 \|\widehat{\mathbf w}_{j-1}^k\|_2^2.
\]
The polynomial represented by \(\widehat{\mathbf v}_J\) belongs to
\(\Pd_{2^J}\), while the polynomial represented by
\(\widehat{\mathbf w}_{j-1}^k\) belongs to \(\Pd_{2^j}\).  In case
\textup{(i)}, the levelwise Marcinkiewicz--Zygmund inequalities therefore give
\[
 (1-\eta)\|\widehat{\mathbf z}\|_2^2
 \leq\|\mathcal A_{J,K}\widehat{\mathbf z}\|_2^2
 \leq(1+\eta)\|\widehat{\mathbf z}\|_2^2
\]
after summing over the coarse and detail branches.  In case \textup{(ii)},
the same argument samples a degree-\(2^j\) branch on \(\Xc_{j+s}\).
Theorem~\ref{thm:fixed-degree-refinement} gives the uniform error
\(\rho_s=C_\Mc2^{-s}\), and hence the asserted bounds for
\(\mathcal A_{J,K}^{(s)}\).  Restricting either operator to a spectral
subspace preserves these inequalities.
The resulting frame bounds imply
\((1-\varepsilon)I\leq\mathbf S_{\mathcal T}\leq(1+\varepsilon)I\), which
proves positive definiteness and \eqref{eq:coefficient-frame-condition-number}.
Finally, \eqref{eq:coefficient-cg-convergence} is the standard
conjugate-gradient estimate for a positive-definite linear system; see, e.g., \cite[Lecture 38]{MR1444820}.
\end{proof}

\subsection{Fast algorithms and computational cost}

Fast algorithms for Fourier maps and their adjoints exist on many
standard manifolds, including fast spherical harmonic transforms on the
sphere and nonequispaced Fourier transforms on the torus; see, e.g.,
\cite{MR1277214,MR3163249,MR1999564,MR2329012,
KeinerKunisPotts2009NFFT,MR2211433}.  Let
\(\mathsf N_j:=|\Xc_j|\), \(M_j:=|\Lambda_j^\alpha|\), and let
\(\mathfrak C_j\) be an upper bound for the cost of one application of
either \(\mathbf F_j\) or \(\mathbf F_j^*\).  The estimates below assume
a fixed fast-transform accuracy and exclude one-time planning and
precomputation costs.  Diagonal filter multiplication costs \(O(M_j)\).

If \(I_{\rm fit}\) iterations are used in
Algorithm~\ref{alg:finest-level-cg}, finest-level coefficient preparation
costs
\[
        O\bigl((2I_{\rm fit}+1)\mathfrak C_K
        +I_{\rm fit}M_K\bigr).
\]
A coefficient analysis pass costs
\[
        O\!\left(
        r\sum_{j=J+1}^{K}\mathfrak C_j+\mathfrak C_J
        +(r+1)\sum_{j=J+1}^{K}M_j
        \right),
\]
and an adjoint synthesis pass has the same order.  If
\(I_{\rm rec}\) coefficient-space CG iterations are used, canonical
reconstruction costs one initial adjoint synthesis plus
\(I_{\rm rec}\) analysis--synthesis pairs and \(O(I_{\rm rec}M_K)\)
additional vector operations.  Projection of a spectral coefficient
vector onto a subspace such as \(\Pd_{2^L}\) costs \(O(M_K)\).
For fixed-tolerance dyadic measures, the constants may depend on the fixed
tolerance \(\eta\), but
$\mathsf N_j\asymp M_j\asymp2^{jd}$.
Here the estimate for \(\mathsf N_j\) follows from the nested
construction, while that for \(M_j\) follows from Weyl's law: continuity
and \(\widehat\alpha(0)\neq0\) make \(\widehat\alpha\) nonzero near the
origin, and its compact support gives the matching upper bound.

For fixed dimension and transform accuracy, the torus nonequispaced fast
Fourier transforms (NFFT) and its adjoint satisfy
\(
        \mathfrak C_j
        =
        O\!\left(M_j\log M_j+\mathsf N_j\right)
\)
\cite{KeinerKunisPotts2009NFFT}.  On \(\mathbb S^2\), letting
\(M_j\asymp n_j^2\), the nonequispaced fast spherical Fourier transform (NFSFT) and its adjoint satisfy
\(
        \mathfrak C_j
        =
        O\!\left(n_j^2\log^2 n_j+\mathsf N_j\right)
        =
        O\!\left(M_j\log^2 M_j+\mathsf N_j\right)
\)
 \cite{MR2329012}.  The diagonal weight scaling costs
\(O(\mathsf N_j)\) and does not change these estimates.  Thus both cases
are covered by
\[
        \mathfrak C_j
        =
        O\!\left(M_j\log^\sigma M_j+\mathsf N_j\right),
        \qquad
        \sigma=
        \begin{cases}
        1,&\Mc=\mathbb T^d,\\
        2,&\Mc=\mathbb S^2.
        \end{cases}
\]
Since \(M_j\asymp\mathsf N_j\) and both grow geometrically, one analysis
or adjoint synthesis pass costs
\[
        O\!\left((r+1)\mathsf N_K
        \log^\sigma\mathsf N_K\right).
\]
Hence \(I_{\rm rec}\) coefficient-space CG iterations cost this
quantity times \(I_{\rm rec}\), up to the initial adjoint synthesis,
and the coefficient storage is \(O((r+1)\mathsf N_K)\).
For an oversampling shift \(s\),
\(\mathsf N_{j+s}\asymp2^{sd}\mathsf N_j\), while \(M_j\) is unchanged.
Therefore
\[
        \mathfrak C_j^{(s)}
        =
        O\!\left(M_j\log^\sigma M_j+\mathsf N_{j+s}\right),
\]
and one oversampled analysis or adjoint synthesis pass costs
\[
        O\!\left((r+1)
        \bigl(M_K\log^\sigma M_K+\mathsf N_{K+s}\bigr)\right).
\]
The corresponding CG cost is this quantity times \(I_{\rm rec}\), up
to the initial adjoint synthesis, and the storage is
\(O((r+1)\mathsf N_{K+s})\).  Thus oversampling increases the
node-dependent cost and storage by a factor of order \(2^{sd}\).

\section{Approximation error}
\label{sec:approximation-error}

The fully discrete transform reconstructs an arbitrary
vector in the terminal coefficient space.  For function approximation,
we instead prescribe a target polynomial space \(\Pd_{2^L}\) and
restrict the reconstruction to that space. Let
\[
        \tau_\alpha
        :=
        \max\{\tau>0:\widehat\alpha(\xi)=1
        \text{ for }0\leq\xi\leq\tau\}>0
\]
and 
\[q_\alpha
        :=
        \min\{q\in\mathbb N_0:2^{-q}\leq\tau_\alpha\}.\]
We analyze \(\Pd_{2^L}\) from the terminal level
\(K_\alpha:=L+q_\alpha\), where we assume \(K_\alpha>J\).  Then
\begin{equation}
\label{eq:terminal-identity-on-polynomials}
        \widehat\alpha\left({\lambda_\ell}/{2^{K_\alpha}}\right)=1,
        \qquad \lambda_\ell\leq2^L.
\end{equation}
Consequently, the terminal scaling coefficients of
\(P\in\Pd_{2^L}\) are its ordinary Fourier coefficients, extended by
zero to \(\Lambda_{K_\alpha}^\alpha\).  We denote this coefficient vector by
\(\widehat{\mathbf P}^{K_\alpha}\), and define the restricted analysis map
\begin{equation}
\label{eq:polynomial-coefficient-analysis}
        \mathcal B_{J,L}P
        :=
        \mathcal A_{J,K_\alpha}\widehat{\mathbf P}^{K_\alpha},
        \qquad P\in\Pd_{2^L}.
\end{equation}
Its adjoint is the one-pass synthesis followed by spectral truncation:
\[
        \mathcal B_{J,L}^*\mathbf c
        =\sum_{\lambda_\ell\leq2^L}
        \bigl(\mathcal A_{J,K_\alpha}^*\mathbf c\bigr)_\ell u_\ell.
\]
The canonical reconstruction can be computed by
Algorithm~\ref{alg:canonical-cg-reconstruction} with
\(\mathcal B_{J,L}\) in place of \(\mathcal A_{J,K_\alpha}\).
In each application of \(\mathcal B_{J,L}^*\), the coefficients with
\(\lambda_\ell>2^L\) are discarded.  This projection is part of the
restricted adjoint and ensures that the CG iteration remains in
\(\mathbb P_{2^L}\).

\begin{theorem}[Polynomial reconstruction and approximation]
\label{thm:polynomial-reconstruction-approximation}
Let $J_0\in\mathbb{N}_0$ be an integer and $J\geq J_0$.
Let \(K_\alpha=L+q_\alpha>J\), and suppose that \(0\leq\varepsilon<1\) and
\[
        (1-\varepsilon)\|P\|_{L^2(\mu)}^2
        \leq
        \|\mathcal B_{J,L}P\|_2^2
        \leq
        (1+\varepsilon)\|P\|_{L^2(\mu)}^2,
        \qquad P\in\Pd_{2^L}.
\]
For \(f\in L^2(\Mc,\mu)\), let
\[
        f_L
        :=
        \sum_{\lambda_\ell\leq2^L}\widehat f_\ell u_\ell
\]
be its \(L^2(\mu)\)-orthogonal projection onto \(\Pd_{2^L}\), and consider the data model
\begin{equation}\label{equ:datamodel}
\mathbf c=\mathcal B_{J,L}f_L+\mathbf e,
\end{equation}  
where $\mathbf e$ encodes potential processing of the framelet coefficients.
Define the one-pass
reconstruction by
\[
        \widetilde f_L
        :=\mathcal B_{J,L}^*\mathbf c
\]
and the canonical reconstruction by
\[
        u_L
        :=
        \operatorname*{argmin}_{P\in\Pd_{2^L}}
        \|\mathcal B_{J,L}P-\mathbf c\|_2^2.
\]
 Then
\begin{equation}
\label{eq:function-one-pass-approximation-error}
        \|\widetilde f_L-f\|_{L^2(\mu)}
        \leq
        \|f-f_L\|_{L^2(\mu)}
        +\varepsilon\|f_L\|_{L^2(\mu)}
        +(1+\varepsilon)^{1/2}\|\mathbf e\|_2,
\end{equation}
and
\begin{equation}
\label{eq:function-canonical-approximation-error}
        \|u_L-f\|_{L^2(\mu)}
        \leq
        \|f-f_L\|_{L^2(\mu)}
        +\frac{1}{\sqrt{1-\varepsilon}}
        \|\mathbf e\|_2.
\end{equation}
\end{theorem}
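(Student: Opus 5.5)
We view $\mathcal B_{J,L}$ as a linear map from the Hilbert space $\Pd_{2^L}$ (with the $L^2(\mu)$ inner product) into the Euclidean coefficient space. The main tool is the operator $\mathbf S:=\mathcal B_{J,L}^*\mathcal B_{J,L}$ on $\Pd_{2^L}$. The assumed two-sided bound says $(1-\varepsilon)\|P\|^2\le\langle\mathbf SP,P\rangle\le(1+\varepsilon)\|P\|^2$, so $\mathbf S$ is self-adjoint with spectrum in $[1-\varepsilon,1+\varepsilon]$. Hence $\|\mathbf S-I\|\le\varepsilon$, $\mathbf S$ is invertible, and $\|\mathcal B_{J,L}\|=\|\mathcal B_{J,L}^*\|\le(1+\varepsilon)^{1/2}$. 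First we check that $\mathcal B_{J,L}^*$ as written is the Hilbert adjoint with respect to $L^2(\mu)$. By \eqref{eq:terminal-identity-on-polynomials}, $P\mapsto\widehat{\mathbf P}^{K_\alpha}$ is the isometric embedding of $\Pd_{2^L}$ into $\mathbb C^{\Lambda_{K_\alpha}^\alpha}$, via orthonormality of $u_\ell$. Its adjoint is spectral truncation to $\lambda_\ell\le2^L$, and composing it with $\mathcal A_{J,K_\alpha}^*$ gives the displayed formula. Every estimate then begins with the triangle inequality $\|g-f\|\le\|f-f_L\|+\|g-f_L\|$ for $g\in\Pd_{2^L}$. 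We could even use Pythagoras, since $g-f_L\perp f-f_L$, but the additive form is what is stated.

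\textbf{One-pass bound.} Write $\widetilde f_L=\mathbf S f_L+\mathcal B_{J,L}^*\mathbf e$. Then $\widetilde f_L-f_L=(\mathbf S-I)f_L+\mathcal B_{J,L}^*\mathbf e$, whose norm is at most $\varepsilon\|f_L\|+(1+\varepsilon)^{1/2}\|\mathbf e\|_2$. Combining this with the triangle inequality gives \eqref{eq:function-one-pass-approximation-error}.

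\textbf{Canonical bound.} Since $\mathcal B_{J,L}$ is injective with closed range (finite dimension), the least-squares minimizer is unique. It is characterized by the normal equation $\mathbf S u_L=\mathcal B_{J,L}^*\mathbf c=\mathbf S f_L+\mathcal B_{J,L}^*\mathbf e$, so $u_L-f_L=\mathbf S^{-1}\mathcal B_{J,L}^*\mathbf e=\mathcal B_{J,L}^\dagger\mathbf e$. We then need $\|\mathcal B_{J,L}^\dagger\|\le(1-\varepsilon)^{-1/2}$, rather than the cruder $(1+\varepsilon)^{1/2}/(1-\varepsilon)$ obtained from multiplying the separate norm bounds. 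This refined pseudoinverse norm is the only step that needs care. The cleanest route is geometric: $\mathcal B_{J,L}u_L$ is the orthogonal projection of $\mathbf c$ onto $\operatorname{ran}\mathcal B_{J,L}$, and $\mathcal B_{J,L}f_L$ already lies in that range. Therefore $\mathcal B_{J,L}(u_L-f_L)$ equals the projection of $\mathbf e$, and its norm is at most $\|\mathbf e\|_2$. The lower frame bound then gives $(1-\varepsilon)^{1/2}\|u_L-f_L\|\le\|\mathbf e\|_2$. Alternatively, $\|\mathbf S^{-1}\mathcal B^*\|^2=\|\mathbf S^{-1}\|\le(1-\varepsilon)^{-1}$, which gives the same bound. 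Adding $\|f-f_L\|$ yields \eqref{eq:function-canonical-approximation-error}.
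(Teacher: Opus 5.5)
Your proposal is correct and follows essentially the same route as the paper: both use $\mathcal S_L=\mathcal B_{J,L}^*\mathcal B_{J,L}$, the decomposition $\widetilde f_L-f_L=(\mathcal S_L-I)f_L+\mathcal B_{J,L}^*\mathbf e$, and the normal-equation identity $u_L-f_L=\mathcal S_L^{-1}\mathcal B_{J,L}^*\mathbf e$. The paper bounds $\|\mathcal S_L^{-1}\mathcal B_{J,L}^*\|$ by the identity $(\mathcal S_L^{-1}\mathcal B_{J,L}^*)(\mathcal S_L^{-1}\mathcal B_{J,L}^*)^*=\mathcal S_L^{-1}$, which is your ``alternative''; your projection-onto-the-range argument gives the same $(1-\varepsilon)^{-1/2}$ constant equally validly, and your explicit check of the adjoint formula is a detail the paper takes for granted.
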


\begin{proof}
Set
\(\mathcal S_L:=\mathcal B_{J,L}^*\mathcal B_{J,L}\).
The assumed bounds imply
\[
        (1-\varepsilon)I
        \leq\mathcal S_L\leq
        (1+\varepsilon)I.
\]
Hence \(\|\mathcal S_L-I\|\leq\varepsilon\) and
 $\|\mathcal B_{J,L}^*\|
 =\|\mathcal B_{J,L}\|
 \leq(1+\varepsilon)^{1/2}$.
For the one-pass reconstruction, the data model
\eqref{equ:datamodel} gives
\[
\widetilde f_L
 =\mathcal B_{J,L}^*\mathbf c
 =\mathcal S_Lf_L+\mathcal B_{J,L}^*\mathbf e.\]
Consequently, $\widetilde f_L-f_L
 =(\mathcal S_L-I)f_L+\mathcal B_{J,L}^*\mathbf e$,
and therefore
\[
\|\widetilde f_L-f_L\|_{L^2(\mu)}
\leq
\varepsilon\|f_L\|_{L^2(\mu)}
+(1+\varepsilon)^{1/2}\|\mathbf e\|_2.
\]
Combining this estimate with
$\|\widetilde f_L-f\|_{L^2(\mu)}
 \leq
 \|\widetilde f_L-f_L\|_{L^2(\mu)}
 +\|f_L-f\|_{L^2(\mu)}$
proves \eqref{eq:function-one-pass-approximation-error}.
For the canonical reconstruction, its normal equation
$ (\mathcal B_{J,L}^*\mathcal B_{J,L})u_L
        =\mathcal B_{J,L}^*\mathbf c$
and the data
model \eqref{equ:datamodel} yield
\(
 \mathcal S_Lu_L
 =\mathcal B_{J,L}^*\mathbf c
 =\mathcal S_Lf_L+\mathcal B_{J,L}^*\mathbf e,
\)
hence
\[
 \mathcal S_L(u_L-f_L)
 =\mathcal B_{J,L}^*\mathbf c-\mathcal B_{J,L}^*\mathcal B_{J,L}f_L
 =\mathcal B_{J,L}^*\mathbf e.
\]
Since \(\mathcal S_L\) is positive definite on \(\mathbb P_{2^L}\) and hence
invertible, it follows
\[
 u_L-f_L
 =\mathcal S_L^{-1}\mathcal B_{J,L}^*\mathbf e.
\]
Moreover,
\[
 \bigl(\mathcal S_L^{-1}\mathcal B_{J,L}^*\bigr)
 \bigl(\mathcal S_L^{-1}\mathcal B_{J,L}^*\bigr)^*
 =
 \mathcal S_L^{-1}\mathcal B_{J,L}^*
 \mathcal B_{J,L}\mathcal S_L^{-1}
 =
 \mathcal S_L^{-1}\mathcal S_L\mathcal S_L^{-1}
 =
 \mathcal S_L^{-1}.
\]
Hence
\(\|\mathcal S_L^{-1}\mathcal B_{J,L}^*\|
\leq(1-\varepsilon)^{-1/2}\), and therefore 
$\|u_L-f_L\|_{L^2(\mu)}
\leq
\|\mathbf e\|_2/\sqrt{1-\varepsilon}$.
The triangle inequality with \(f-f_L\) now proves
\eqref{eq:function-canonical-approximation-error}.  
\end{proof}

\begin{remark}
If \(\mathbf e=0\), meaning that the decomposed coefficients are not perturbed or otherwise modified, then \(u_L=f_L\). In particular, every
\(f\in\Pd_{2^L}\) is reconstructed exactly in this case.
\end{remark}

\begin{remark}[Finite-iteration CG error]
The function \(u_L\) above is the exact solution of the restricted
normal equation.  In computation, a conjugate-gradients method produces an
iterate \(u_L^{(m)}\) with residual
\[
        \mathbf r_m
        :=
        \mathcal B_{J,L}^*\mathbf c
        -\mathcal S_Lu_L^{(m)}=\mathcal S_L(u_L-u_L^{(m)}).
\]
Therefore, $u_L^{(m)}-u_L
        =-\mathcal S_L^{-1}\mathbf r_m$.
Using \(\|\mathcal S_L^{-1}\|\leq(1-\varepsilon)^{-1}\),
\[
        \|u_L^{(m)}-u_L\|_{L^2(\mu)}
        \leq
        \frac{\|\mathbf r_m\|_2}{1-\varepsilon}.
\]
Combining this estimate with
\eqref{eq:function-canonical-approximation-error} gives the total error
bound for the finite-iteration reconstruction
\[
        \|u_L^{(m)}-f\|_{L^2(\mu)}
        \leq
        \|f-f_L\|_{L^2(\mu)}
        +\frac{\|\mathbf e\|_2}{\sqrt{1-\varepsilon}}
        +\frac{\|\mathbf r_m\|_2}{1-\varepsilon}.
\]
Thus the total error consists of the polynomial approximation error,
the coefficient perturbation error, and the finite-iteration CG error.
Moreover, the spectral bounds imply
\(\kappa(\mathcal S_L)\leq(1+\varepsilon)/(1-\varepsilon)\), so a smaller
frame-bound deviation also improves the conditioning of the CG system.
\end{remark}

\begin{corollary}[Polynomial reconstruction for nested Marcinkiewicz--Zygmund measures]
\label{cor:polynomial-reconstruction-mz}
Let \(K_\alpha=L+q_\alpha>J\), and use the notation of
Theorem~\ref{thm:polynomial-reconstruction-approximation}.
\begin{enumerate}[label=\textup{(\roman*)}]
\item For the basic system in Definition \ref{def:semi-discrete-framelet-system}, with \(p=2\) and tolerance
\(\eta\),
Proposition~\ref{prop:finite-coefficient-frame-bounds}\textup{(i)}
gives \(\varepsilon=\eta\).  Hence
\[
        \|u_L-f\|_{L^2(\mu)}
        \leq
        \|f-f_L\|_{L^2(\mu)}
        +\frac{\|\mathbf e\|_2}{\sqrt{1-\eta}}.
\]

\item For the oversampled system in
Definition~\ref{def:oversampled-framelet-system}, let
\(\mathcal B_{J,L}^{(s)}\) denote the corresponding restricted analysis
map, and suppose that \(\rho_s=C_\Mc2^{-s}<1\).  By
Proposition~\ref{prop:finite-coefficient-frame-bounds}\textup{(ii)},
this map has \(\varepsilon=\rho_s\).  If
\(\mathbf c=\mathcal B_{J,L}^{(s)}f_L+\mathbf e\), its canonical
reconstruction \(u_L^{(s)}\) satisfies
\[
        \|u_L^{(s)}-f\|_{L^2(\mu)}
        \leq
        \|f-f_L\|_{L^2(\mu)}
        +\frac{\|\mathbf e\|_2}{\sqrt{1-\rho_s}}.
\]
\end{enumerate}
\end{corollary}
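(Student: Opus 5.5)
The plan is to derive both parts directly from Theorem~\ref{thm:polynomial-reconstruction-approximation}, applied with the appropriate analysis map. The only thing to supply is the hypothesis of that theorem, namely the two-sided bound
\[
(1-\varepsilon)\|P\|_{L^2(\mu)}^2\leq\|\mathcal T P\|_2^2\leq(1+\varepsilon)\|P\|_{L^2(\mu)}^2,\qquad P\in\Pd_{2^L},
\]
with $\mathcal T=\mathcal B_{J,L}$ in (i) and $\mathcal T=\mathcal B_{J,L}^{(s)}$ in (ii). Once this holds, \eqref{eq:function-canonical-approximation-error} gives the stated estimates verbatim, with $\varepsilon=\eta$ and $\varepsilon=\rho_s$ respectively.

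To verify the hypothesis, I will use the definition \eqref{eq:polynomial-coefficient-analysis}: $\mathcal B_{J,L}P=\mathcal A_{J,K_\alpha}\widehat{\mathbf P}^{K_\alpha}$. The coefficient vector $\widehat{\mathbf P}^{K_\alpha}$ consists of the Fourier coefficients of $P$, extended by zero to $\Lambda_{K_\alpha}^\alpha$. This extension is legitimate because \eqref{eq:terminal-identity-on-polynomials} gives $\widehat\alpha(\lambda_\ell/2^{K_\alpha})=1\neq0$ for $\lambda_\ell\leq2^L$, so every such index lies in $\Lambda_{K_\alpha}^\alpha$. By orthonormality, $\|\widehat{\mathbf P}^{K_\alpha}\|_2=\|P\|_{L^2(\mu)}$.

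Proposition~\ref{prop:finite-coefficient-frame-bounds}(i), with $K=K_\alpha$, gives $(1-\eta)\|\widehat{\mathbf z}\|_2^2\leq\|\mathcal A_{J,K_\alpha}\widehat{\mathbf z}\|_2^2\leq(1+\eta)\|\widehat{\mathbf z}\|_2^2$ for all $\widehat{\mathbf z}$. That proposition also notes that the bound survives restriction to the spectral subspace $\{\widehat{\mathbf z}:z_\ell=0 \text{ for }\lambda_\ell>2^L\}$. Substituting $\widehat{\mathbf z}=\widehat{\mathbf P}^{K_\alpha}$ yields the hypothesis with $\varepsilon=\eta$. Its filter-bank assumptions hold because the fixed-tolerance measures of Theorem~\ref{thm:fixed-tolerance-mz} with $p=2$ are the ones used in Definition~\ref{def:semi-discrete-framelet-system}.

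Part (ii) is identical. Here $\mathcal B_{J,L}^{(s)}P:=\mathcal A^{(s)}_{J,K_\alpha}\widehat{\mathbf P}^{K_\alpha}$, and Proposition~\ref{prop:finite-coefficient-frame-bounds}(ii) supplies the bound with $\rho_s<1$ in place of $\eta$.

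The main point needing care is consistency between the data models. For (ii), Theorem~\ref{thm:polynomial-reconstruction-approximation} must be read with $\mathcal B_{J,L}^{(s)}$ throughout: the data are $\mathbf c=\mathcal B_{J,L}^{(s)}f_L+\mathbf e$, and $u_L^{(s)}$ is the corresponding least-squares minimizer over $\Pd_{2^L}$. That proof used only the frame bounds of the map and the existence of its adjoint, so it transfers unchanged. Beyond this bookkeeping, the argument is immediate.
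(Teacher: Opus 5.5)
Your proposal is correct and takes the same route as the paper. The corollary is simply Theorem~\ref{thm:polynomial-reconstruction-approximation} with $\varepsilon$ supplied by Proposition~\ref{prop:finite-coefficient-frame-bounds}\textup{(i)} or \textup{(ii)}, restricted to the zero-extended coefficient vectors of $\Pd_{2^L}$; your check via \eqref{eq:terminal-identity-on-polynomials} and Parseval is the step the paper leaves implicit.

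One small slip: the refinement relations and the partition identity \eqref{eq:filter-bank-partition} required by Proposition~\ref{prop:finite-coefficient-frame-bounds} are standing assumptions on the filter bank. They do not follow from using the measures of Theorem~\ref{thm:fixed-tolerance-mz}, which only supply the levelwise Marcinkiewicz--Zygmund bounds.
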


The error term $\|f-f_L\|_{L^2(\mu)}$ in these estimate can be refined if $f$ has extra smoothness.
\begin{remark}[Sobolev approximation \cite{MR2274179}]
If \(t>0\) and \(f\) belongs to the spectral Sobolev space
\[
        H^t(\Mc)
        :=\left\{f\in L^2(\Mc):
        \sum_{\ell\geq0}(1+\lambda_\ell^2)^t
        |\widehat f_\ell|^2<\infty\right\},
\]
then $\|f-f_L\|_{L^2(\mu)}
        \leq 2^{-Lt}\|f\|_{H^t(\Mc)}$.
\end{remark}

\section{Multiscale data analysis}
\label{sec:multiscale-data-analysis}

We then illustrate the nested framelet transform on the sphere
and on the torus, focusing on oversampling, multiscale decomposition,
and reconstruction.  

\subsection{Common numerical setup}

We consider the two-high-pass filter bank of
Wang and Zhuang in \cite{wang2020tight}, built from the smooth cutoff used
in \cite[Chapter~4]{MR1162107}.  Let
\[
        \vartheta(t)=
        \begin{cases}
        0, & t<0,\\
        t^4(35-84t+70t^2-20t^3), & 0\leq t\leq1,\\
        1, & t>1 .
        \end{cases}
\]
On the frequency interval \(0\leq |\xi|\leq 1/2\), the filters are
defined by
\begin{equation}
\label{eq:numerical-filter-bank}
\begin{aligned}
\widehat a(\xi)
&=
\begin{cases}
1, & |\xi|<1/8,\\
\cos\!\left(\frac{\pi}{2}\vartheta(8|\xi|-1)\right),
        & 1/8\leq |\xi|\leq1/4,\\
0, & 1/4<|\xi|\leq1/2,
\end{cases}
\\
\widehat {b^1}(\xi)
&=
\begin{cases}
0, & |\xi|<1/8,\\
\sin\!\left(\frac{\pi}{2}\vartheta(8|\xi|-1)\right),
        & 1/8\leq |\xi|\leq1/4,\\
\cos\!\left(\frac{\pi}{2}\vartheta(4|\xi|-1)\right),
        & 1/4<|\xi|\leq1/2,
\end{cases}
\\
\widehat {b^2}(\xi)
&=
\begin{cases}
0, & |\xi|<1/4,\\
\sin\!\left(\frac{\pi}{2}\vartheta(4|\xi|-1)\right),
        & 1/4\leq |\xi|\leq1/2 .
\end{cases}
\end{aligned}
\end{equation}
The associated generators obtained from the refinement equations
\eqref{eq:filter-refinement-equations} are
\begin{equation}
\label{eq:numerical-generators}
\begin{aligned}
\widehat\alpha(\xi)
&=
\begin{cases}
1, & |\xi|<1/4,\\
\cos\!\left(\frac{\pi}{2}\vartheta(4|\xi|-1)\right),
        & 1/4\leq |\xi|\leq1/2,\\
0, & \text{otherwise},
\end{cases}
\\
\widehat{\beta^1}(\xi)
&=
\begin{cases}
\sin\!\left(\frac{\pi}{2}\vartheta(4|\xi|-1)\right),
        & 1/4\leq |\xi|<1/2,\\
\cos^2\!\left(\frac{\pi}{2}\vartheta(2|\xi|-1)\right),
        & 1/2\leq |\xi|\leq1,\\
0, & \text{otherwise},
\end{cases}
\\
\widehat{\beta^2}(\xi)
&=
\begin{cases}
\cos\!\left(\frac{\pi}{2}\vartheta(2|\xi|-1)\right)
\sin\!\left(\frac{\pi}{2}\vartheta(2|\xi|-1)\right),
        & 1/2\leq |\xi|\leq1,\\
0, & \text{otherwise}.
\end{cases}
\end{aligned}
\end{equation}
Figure~\ref{fig:filter-generator} displays the filter symbols and the
associated generator functions.

\begin{figure}[htbp]
        \centering
        \includegraphics[width=\textwidth]{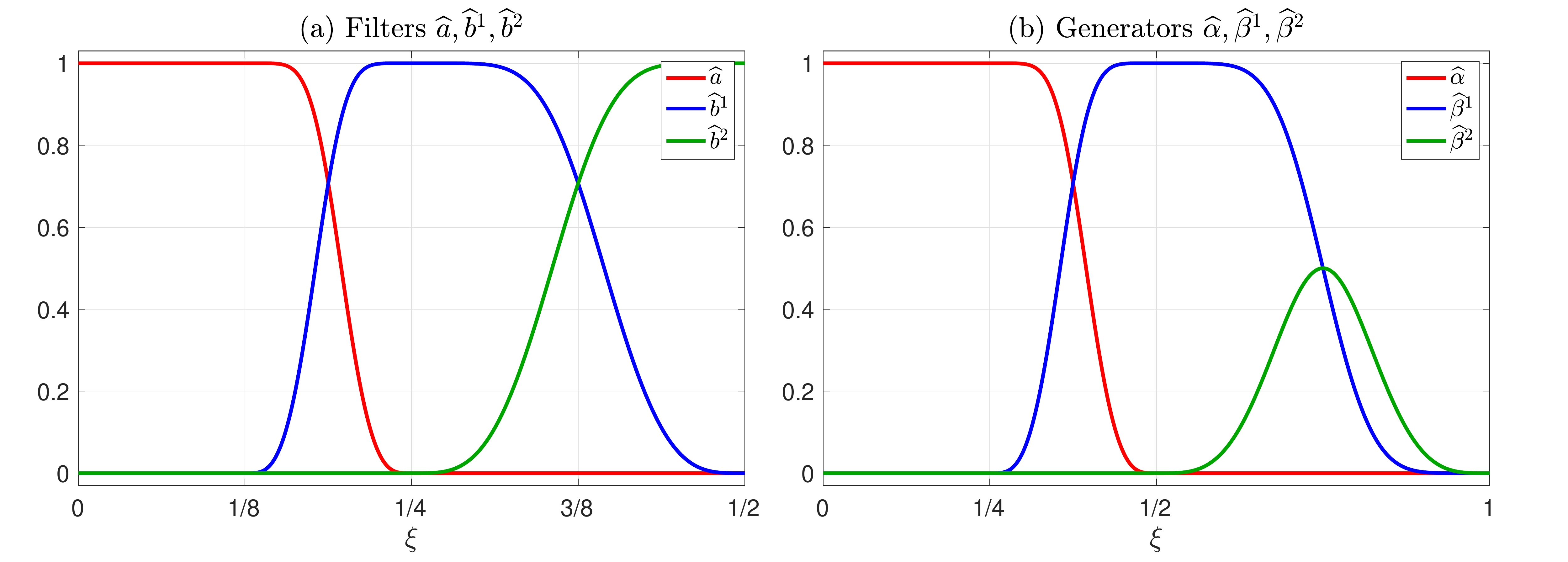}
        \caption{A two-high-pass filter bank and the associated
        framelet generators.}
        \label{fig:filter-generator}
\end{figure}

The assumptions used in the preceding sections are readily checked for
this choice.  The filters in \eqref{eq:numerical-filter-bank} satisfy
\begin{equation}
\label{eq:numerical-filter-partition}
        |\widehat a(\xi)|^2
        +|\widehat {b^1}(\xi)|^2
        +|\widehat {b^2}(\xi)|^2
        =1,\qquad 0\leq |\xi|\leq1/2 .
\end{equation}
This follows piecewise from \(\cos^2\theta+\sin^2\theta=1\), and,
together with the refinement relations
\eqref{eq:filter-refinement-equations}, gives
\eqref{eq:spectral-refinement}.  The support condition
\eqref{eq:filter-unit-support} is immediate from
\eqref{eq:numerical-generators} since
$\supp\widehat\alpha\subset[-1/2,1/2]$ and
$\supp\widehat{\beta^1}\cup\supp\widehat{\beta^2}
        \subset[-1,1]$.
Moreover, \(\widehat\alpha(\xi)=1\) for \(0\leq|\xi|\leq1/4\).  Hence the
low-pass limit condition \eqref{eq:spectral-lowpass-limit} holds and
\(\tau_\alpha=1/4\).  In the coefficient formulation of
Section~\ref{sec:fully-discrete-transform} this
gives \(q_\alpha=2\): a target polynomial space \(\Pd_{2^L}\) is analyzed
from finest scaling level \(K=L+2\).  At that level the finest-level
scaling multiplier is identically one on every frequency
\(\lambda_\ell\leq2^L\), as stated in
\eqref{eq:terminal-identity-on-polynomials}.  This finest-level offset is used
in the polynomial reconstruction and approximation results of
Section~\ref{sec:approximation-error}.
The finite numerical experiments below instead keep the prescribed finest
bandwidth fixed: they first compute a weighted least-squares projection in
that space and then apply the coefficient filter bank directly, without
adding two finest levels.

Both CG routines start from zero and terminate when the relative
residual of the corresponding normal equation satisfies
\begin{equation}
\label{eq:numerical-cg-stopping-rule}
        \frac{\|\mathbf r_m\|_2}
        {\max\{\|\mathbf b\|_2,\epsilon_{\mathrm{mach}}\}}
        \leq\tau .
\end{equation}
or when the prescribed iteration cap is reached.
Here \(H\mathbf z=\mathbf b\) is either the finest-level least-squares
normal equation, with \(H=\mathbf F^*\mathbf F\), or the canonical
frame-operator equation, with
\(H=\mathcal A^*\mathcal A\).  We use \(\tau_{\mathrm{fit}}=10^{-10}\) for every finest-level
least-squares projection.  The canonical reconstruction tolerance is
\(\tau_{\mathrm{rec}}=10^{-9}\) in the following Examples~1--3 and
\(\tau_{\mathrm{rec}}=10^{-8}\) in Example~4.  The corresponding
iteration caps \((M_{\mathrm{fit}},M_{\mathrm{rec}})\) are
\((140,50)\), \((120,40)\), \((80,40)\), and \((80,25)\),
respectively.

All numerical experiments were performed in MATLAB R2024a on a MacBook Pro (16-inch, 2019) with a 2.6 GHz 6-core Intel Core i7 processor and 16 GB RAM.  The fast transforms were evaluated with the NFFT 3.5.3 MATLAB package \cite{KeinerKunisPotts2009NFFT} via its macOS MEX interfaces; the spherical experiments use its NFSFT routines when
available, and the torus experiments use its two-dimensional NFFT
routines.

\subsection{On the sphere}

For the spherical experiments, at a dyadic level \(j\) we take
\(n_j=2^j\) as the spherical harmonic degree cutoff and choose
$|\Xc_j|=\lceil m(2^j+1)^2\rceil
        =\lceil m(n_j+1)^2\rceil$,
where \(m\geq1\) is the oversampling factor.  
The nodes are generated from a large deterministic Fibonacci candidate
set by a greedy farthest-point ordering with chordal distance; the sets
\(\Xc_j\) are nested prefixes of this ordering with the cardinalities
specified above. Given the nested node set
\(\Xc_j\), we use the surface-area version of the nearest-neighbor
partition weights in Definition~\ref{def:partition-weights}.
Numerically, a dense Fibonacci proxy set
\(\{z_q\}_{q=1}^Q\) is assigned to the nearest sample points and we set
\[
        w_{j,x}
        =
        \frac{4\pi}{Q}
        \#\{q:x\text{ is nearest to }z_q\}.
\]
On the unit sphere, nearest neighbors in chordal and geodesic distance
coincide, so these weights approximate the areas of the corresponding
spherical nearest-neighbor cells.
Here we use the standard surface-area normalization
\(\sum_{x\in\Xc_j}w_{j,x}=4\pi\); this differs from the probability
normalization in the theoretical sections only by a constant factor.
These farthest-point prefixes with partition-type weights are not
spherical designs or exact quadrature rules; they are used here as
simple quasi-uniform nested Marcinkiewicz--Zygmund-type sampling rules.

\paragraph{Example 1: Oversampling for a Wendland test function.}
We first test the effect of oversampling on reconstruction.  The signal
is a sum of Wendland-type bumps \cite{chernih2014wendland} on
\(\mathbb S^2\):
\[
        f(x)=\sum_{c\in\{\pm e_1,\pm e_2,\pm e_3\}}
        \phi(\|x-c\|_2),
\]
where
\[
        \phi(r)=
        \frac{(1-r)_{+}^{10}
        \left(429r^4+450r^3+210r^2+50r+5\right)}
        {5\tau_4},
        \qquad
        \tau_4=\frac{15\,\Gamma(9/2)}{2\,\Gamma(5)} .
\]
Here \(e_1,e_2,e_3\) are the coordinate vectors in \(\mathbb R^3\), and
\((t)_+=\max\{t,0\}\).  We use two finest polynomial degrees,
\(n=16\) and \(n=32\), corresponding respectively to the dyadic levels
\(j=4,3,2\) and \(j=5,4,3\).  For each fixed degree, we vary the
oversampling factor over the twelve values $m_i=1.2+0.1i(i+1)$ for $i=1,\ldots,12$. 

Figure~\ref{fig:wendland-reconstruction} shows the Wendland signal, the
one-pass reconstruction and its pointwise error, and the canonical CG
reconstruction and its pointwise error, for the case \(n=32\) with
oversampling factor \(m=1.4\).  The pointwise errors are measured
against the bandlimited projection on an independent Fibonacci test
grid.

\begin{figure}[htbp]
        \centering
        \begin{minipage}{0.3\textwidth}
        \centering
        \textbf{(a)} Wendland function\\ 
        \includegraphics[width=\linewidth]{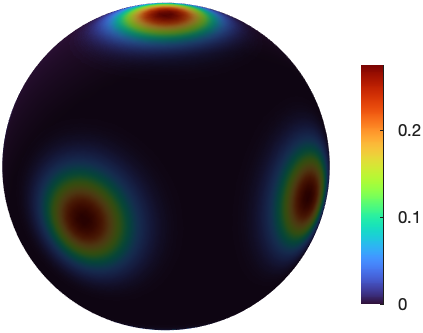}
        \end{minipage}
        \begin{minipage}{0.3\textwidth}
        \centering
        \textbf{(b)} One-pass reconstruction\\
        \includegraphics[width=\linewidth]{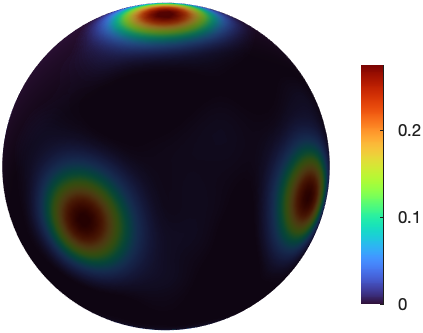}
        \end{minipage}
        \begin{minipage}{0.3\textwidth}
        \centering
        \textbf{(c)} CG reconstruction\\
        \includegraphics[width=\linewidth]{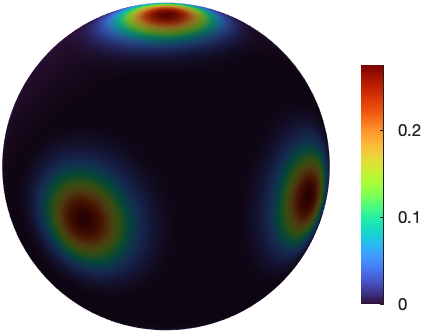}
        \end{minipage}

                \vspace{2ex}

                \begin{minipage}{0.33\textwidth}
        \centering
        \textbf{(d)} One-pass error\\
        \includegraphics[width=\linewidth]{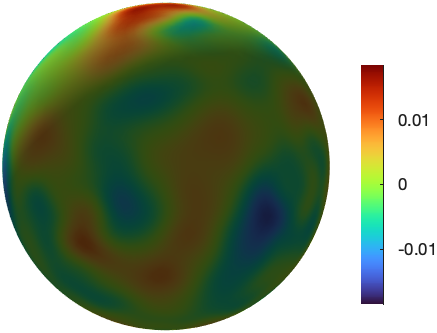}
        \end{minipage}
        \begin{minipage}{0.33\textwidth}
        \centering
        \textbf{(e)} CG error\\
        \includegraphics[width=\linewidth]{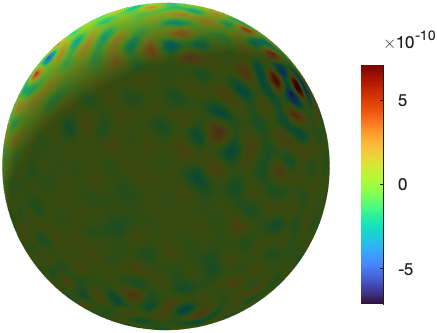}
        \end{minipage}
        \caption{Wendland-type signal reconstructions on
        \(\mathbb S^2\), for \(n=32\) and oversampling factor \(m=1.4\).}
        \label{fig:wendland-reconstruction}
\end{figure}

Figure~\ref{fig:wendland-oversampling} reports the corresponding
relative errors as the number of finest-level nodes increases.  The
one-pass synthesis error decreases as the sampling density increases,
while the canonical CG reconstruction error is several orders of
magnitude smaller.  Its eventual scale is compatible with the
reconstruction residual tolerance \(\tau_{\mathrm{rec}}=10^{-9}\),
after allowing for the conditioning and the fast-transform error.

\begin{figure}[htbp]
        \centering
        \includegraphics[width=\textwidth]{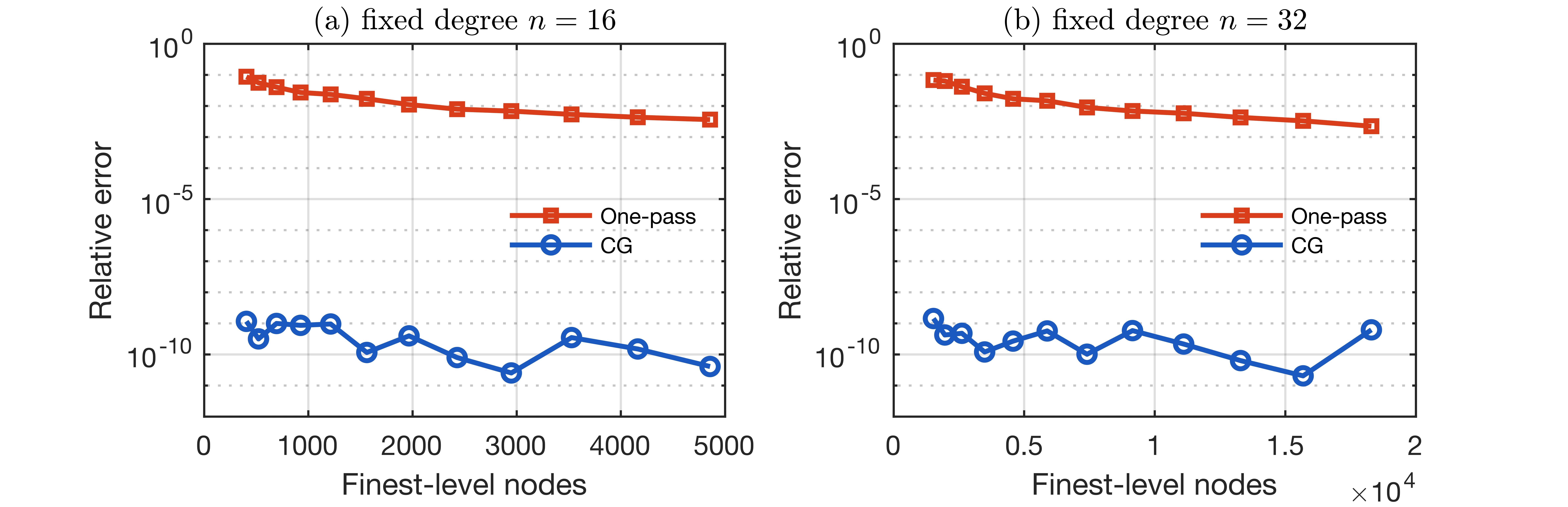}
        \caption{Relative reconstruction error for the Wendland-type
        test function as the number of finest-level nodes increases. }
        \label{fig:wendland-oversampling}
\end{figure}

\paragraph{Example 2: Multiscale analysis of an Earth texture signal.}
We next apply the same transform to an Earth texture signal.  The input
is obtained from a Natural Earth texture map\footnote{
Adapted from Natural Earth III by Tom Patterson
(\url{https://www.shadedrelief.com}); public domain.} by converting the
image to grayscale and sampling it at the finest nested nodes.  The
experiment therefore treats image intensity as a scalar function on
\(\mathbb S^2\), not as physical elevation data. We use levels \(j=7,6,5\), with spherical harmonic degree cutoffs
$n_7=128$, $n_6=64$, and $n_5=32$.
With oversampling factor \(m=2.5\), the nested farthest-point prefixes
have cardinalities
$|\Xc_7|=41603$, $|\Xc_6|=10563$, and $|\Xc_5|=2723$.
The ordering is selected from \(249618\) Fibonacci candidates; a
separate Fibonacci proxy set with \(499236\) points is used only to
approximate the nearest-neighbor cell weights.

Let \(\mathbf v\) denote the sampled texture data on \(\Xc_7\).  We
compute its weighted least-squares projection \(\mathbf v_7\) onto
spherical harmonics of degree at most \(128\), using conjugate gradients
with tolerance \(10^{-10}\) and at most \(120\) iterations.  The residual
is \(\mathbf w_7=\mathbf v-\mathbf v_7\).  Starting from the finest
spectral coefficient vector, two filter-bank analysis steps produce
\(\mathbf v_6,\mathbf w_6^1,\mathbf w_6^2\) and then
\(\mathbf v_5,\mathbf w_5^1,\mathbf w_5^2\).  The projection relative
error is \(1.5885\times10^{-1}\).  Relative to the projected signal
\(\mathbf v_7\), the one-pass synthesis error is
\(3.4203\times10^{-2}\), while the canonical coefficient-space CG
reconstruction reduces this error to \(6.4390\times10^{-10}\).  The
corresponding relative error against the original image data remains
\(1.5885\times10^{-1}\), with SNR \(15.888\) dB. These
values are consistent with \eqref{eq:numerical-cg-stopping-rule}: the
canonical error relative to the projection is at the algebraic solver
scale, whereas the much larger error relative to the image is the
bandlimited projection error and is unaffected by further CG
iterations.  These
quantities are shown in Figure~\ref{fig:texturemap}: the approximation
terms retain the large-scale texture, while the detail terms capture
oscillatory features at the corresponding dyadic resolutions.

\begin{figure}[htbp]
        \centering
        \begin{minipage}{0.32\textwidth}
        \centering
        \textbf{(a)} Texture map data \(\mathbf v\)\\
        \includegraphics[width=\linewidth]{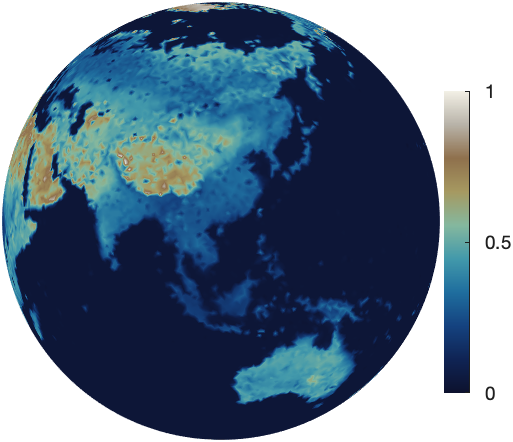}
        \end{minipage}\hfill
        \begin{minipage}{0.32\textwidth}
        \centering
        \textbf{(b)} Projection term \(\mathbf v_7\)\\
        \includegraphics[width=\linewidth]{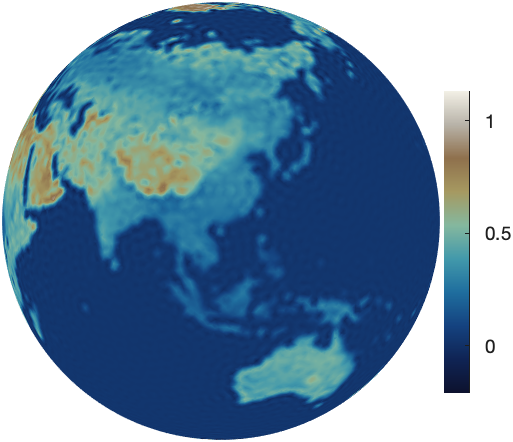}
        \end{minipage}\hfill
        \begin{minipage}{0.32\textwidth}
        \centering
        \textbf{(c)} Error term \(\mathbf w_7\)\\
        \includegraphics[width=\linewidth]{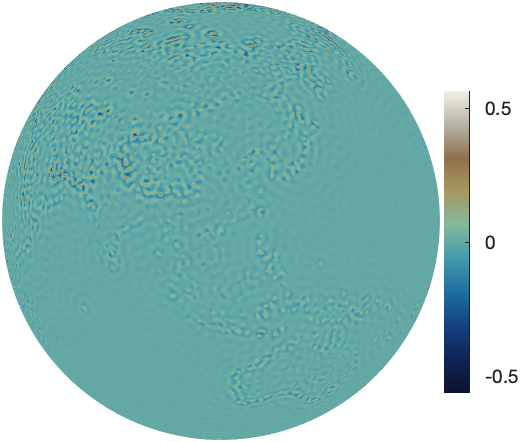}
        \end{minipage}

        \vspace{1ex}

        \begin{minipage}{0.32\textwidth}
        \centering
        \textbf{(d)} Approximation \(\mathbf v_{6}\)\\
        \includegraphics[width=\linewidth]{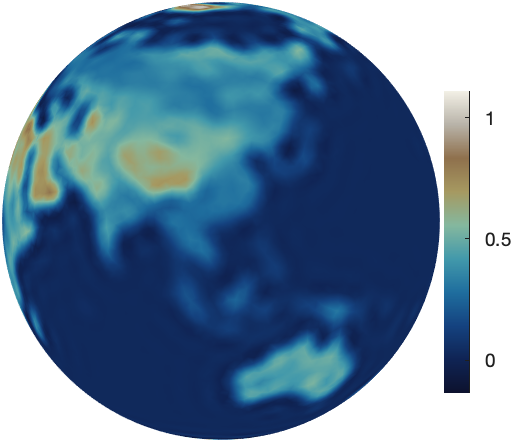}
        \end{minipage}\hfill
        \begin{minipage}{0.32\textwidth}
        \centering
        \textbf{(e)} Detail \(\mathbf w_{6}^{1}\)\\
        \includegraphics[width=\linewidth]{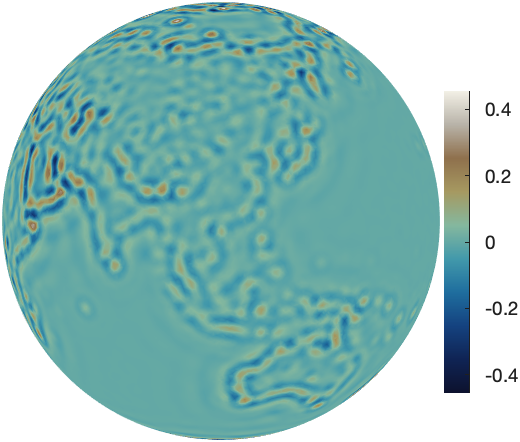}
        \end{minipage}\hfill
        \begin{minipage}{0.32\textwidth}
        \centering
        \textbf{(f)} Detail \(\mathbf w_{6}^{2}\)\\
        \includegraphics[width=\linewidth]{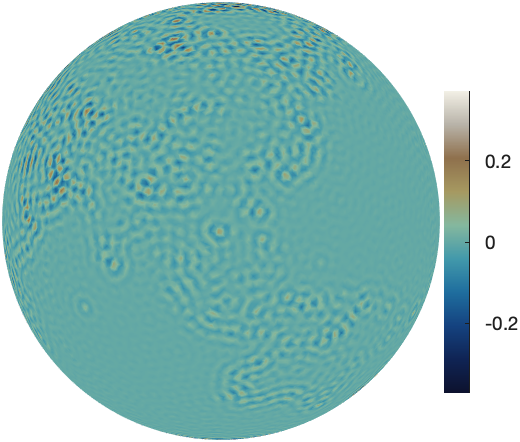}
        \end{minipage}

        \vspace{1ex}

        \begin{minipage}{0.32\textwidth}
        \centering
        \textbf{(g)} Approximation \(\mathbf v_{5}\)\\
        \includegraphics[width=\linewidth]{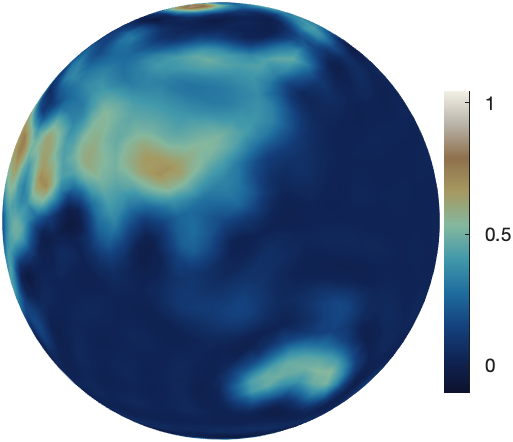}
        \end{minipage}\hfill
        \begin{minipage}{0.32\textwidth}
        \centering
        \textbf{(h)} Detail \(\mathbf w_{5}^{1}\)\\
        \includegraphics[width=\linewidth]{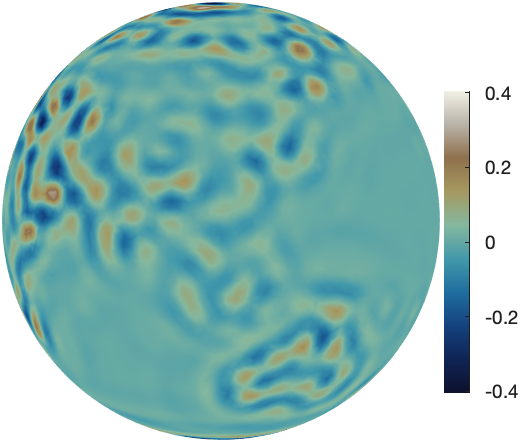}
        \end{minipage}\hfill
        \begin{minipage}{0.32\textwidth}
        \centering
        \textbf{(i)} Detail \(\mathbf w_{5}^{2}\)\\
        \includegraphics[width=\linewidth]{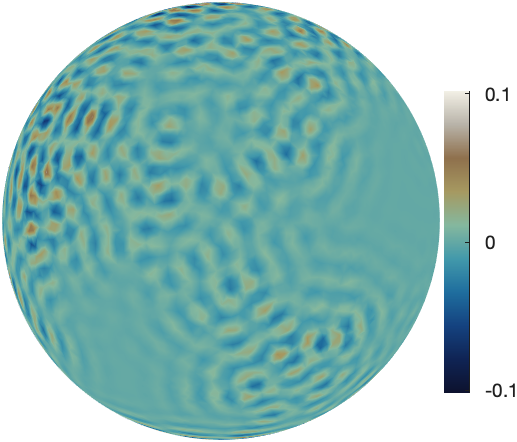}
        \end{minipage}
        \caption{Multiscale decomposition of an Earth texture signal on
        nested spherical sampling sets. }
        \label{fig:texturemap}
\end{figure}

\subsection{On the torus}

We also consider the flat torus \(\mathbb T^2=[0,1)^2\), with periodic
boundary conditions and normalized Lebesgue measure.  With the
\(2\pi\)-Fourier convention, \(u_k(x)=e^{2\pi i k\cdot x}\),
\(k\in\mathbb Z^2\), we have
$ -\Delta u_k=(2\pi)^2|k|^2u_k=\lambda_k^2u_k$ with $\lambda_k=2\pi|k|$,
        and we write
$\mathbb P_n(\mathbb T^2)
        =
        \operatorname{span}\{e^{2\pi i k\cdot x}:|k|\leq n\}$,
        where $
        D_n=\#\{k\in\mathbb Z^2:|k|\leq n\}$. 
For dyadic degrees \(n_j=2^j\), we measure oversampling relative to
\(D_{n_j}\).  Given \(m\geq1\), set
\(q_J=\lceil\sqrt{mD_{n_J}}\rceil\) and
\(q_j=2^{j-J}q_J\), so that \(|\Xc_j|=q_j^2\) has the same order as
\(mD_{n_j}\).  The set \(\Xc_j\) is a \(q_j\times q_j\) periodic
tensor-cell rule with one jittered point per cell: each grid center is
randomly displaced by a fixed fraction of the cell width and then
reduced modulo one.  The finest jittered array is generated first, and
coarser levels are dyadic subarrays, so
\(\Xc_J\subset\cdots\subset\Xc_L\).  We use the cell-area weights
\(w_{j,x}=q_j^{-2}\).  These are not exact tensor-product quadrature
rules, but simple quasi-uniform nested Marcinkiewicz--Zygmund-type sampling rules.  For
visualization, the flat torus data are displayed on the standard
embedded torus in \(\mathbb R^3\).

\paragraph{Example 3: Multiscale analysis on an oscillatory signal.}
We use dyadic levels \(j=6,5,4\), with $n_6=64$, $n_5=32$, and $n_4=16$.
The oversampling factor is \(m=2.5\), the jitter parameter is \(0.25\),
and the random seed is fixed at \(5\).  This gives $q_6=180$, $q_5=90$, and $q_4=45$,
and hence
$|\Xc_6|=32400$, $|\Xc_5|=8100$, and $|\Xc_4|=2025$.
The test signal is a synthetic periodic function consisting of
low-frequency trigonometric modes, localized periodic Gaussian bumps,
and two high-frequency oscillatory modes.  More precisely, before
normalization to the range \([0,1]\), we use
\[
\begin{aligned}
f(u,v)
={}&0.32\cos(2\pi u)+0.22\sin(2\pi v)
      +0.18\cos(2\pi(u-v)) +\sum_{\ell=1}^4 A_\ell
  \exp\!\left(-\frac{d_{\mathbb T^2}((u,v),c_\ell)^2}
                   {2\sigma_\ell^2}\right)  \\
&+0.16\sin(2\pi(24u+16v))
 +0.12\cos(2\pi(31u-11v)),
\end{aligned}
\]
where $(A_1,A_2,A_3,A_4)=(1.00,0.70,0.85,0.55)$,
        $c_1=(0.22,0.28)$, $c_2=(0.62,0.36)$,
        $c_3=(0.48,0.76)$, $c_4=(0.82,0.68)$,
and $(\sigma_1,\sigma_2,\sigma_3,\sigma_4)
        =(0.055,0.075,0.060,0.045)$.
The finest weighted least-squares projection uses conjugate gradients
with tolerance \(10^{-10}\).  The resulting spectral coefficient vector is
used as the input to the multilevel analysis transform.  In this experiment
the projection relative error is \(4.8773\times10^{-11}\).  The one-pass
synthesis differs from the projected signal by \(1.2697\times10^{-2}\), while
the canonical coefficient-space CG reconstruction reduces this error to
\(2.7717\times10^{-10}\).  The latter is below the prescribed residual
tolerance \(10^{-9}\), which is possible because the stopping criterion
is an upper threshold on the normal-equation residual rather than an
identity for the reconstructed function error.
These numbers describe the observed behavior of
the implemented jittered Marcinkiewicz--Zygmund-type rule and should not be interpreted as
computed theoretical frame bounds.  Figure~\ref{fig:torus-multiscale}
shows the resulting multiscale decomposition.  The approximation terms
retain the low-frequency trend and localized bump structure, whereas
the high-pass terms mainly capture the imposed oscillatory components
at the corresponding dyadic scales.  The flat torus data are displayed
on the standard embedded torus in \(\mathbb R^3\).  

\begin{figure}[htbp]
        \centering
        \begin{minipage}{0.32\textwidth}
        \centering
        \textbf{(a)} Torus data \(\mathbf v\)\\
        \includegraphics[width=\linewidth]{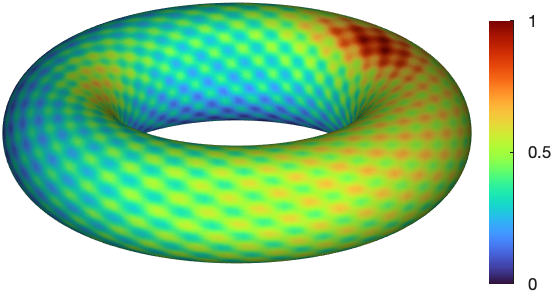}
        \end{minipage}\hfill
        \begin{minipage}{0.32\textwidth}
        \centering
        \textbf{(b)} Projection term \(\mathbf v_6\)\\
        \includegraphics[width=\linewidth]{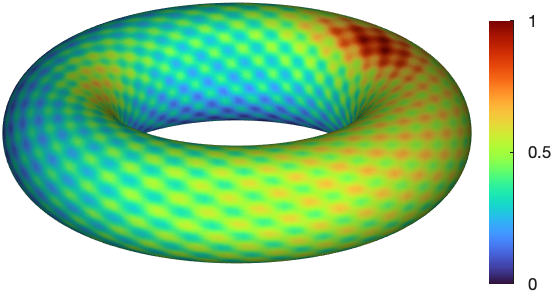}
        \end{minipage}\hfill
        \begin{minipage}{0.32\textwidth}
        \centering
        \textbf{(c)} Error term \(\mathbf w_6\)\\
        \includegraphics[width=\linewidth]{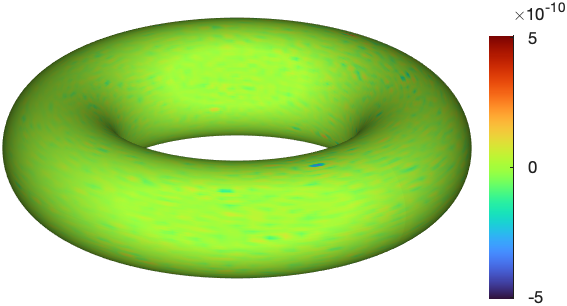}
        \end{minipage}

        \vspace{1ex}

        \begin{minipage}{0.32\textwidth}
        \centering
        \textbf{(d)} Approximation \(\mathbf v_{5}\)\\
        \includegraphics[width=\linewidth]{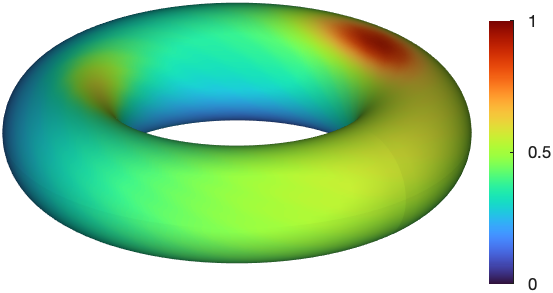}
        \end{minipage}\hfill
        \begin{minipage}{0.32\textwidth}
        \centering
        \textbf{(e)} Detail \(\mathbf w_{5}^{1}\)\\
        \includegraphics[width=\linewidth]{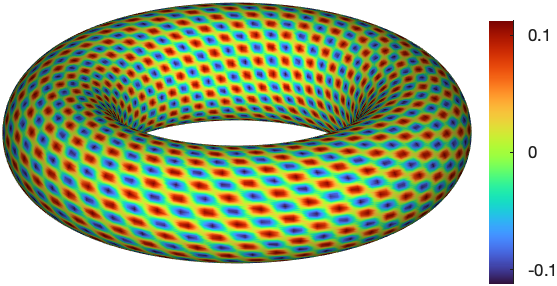}
        \end{minipage}\hfill
        \begin{minipage}{0.32\textwidth}
        \centering
        \textbf{(f)} Detail \(\mathbf w_{5}^{2}\)\\
        \includegraphics[width=\linewidth]{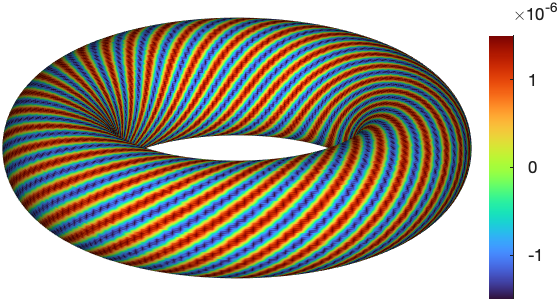}
        \end{minipage}

        \vspace{1ex}

        \begin{minipage}{0.32\textwidth}
        \centering
        \textbf{(g)} Approximation \(\mathbf v_{4}\)\\
        \includegraphics[width=\linewidth]{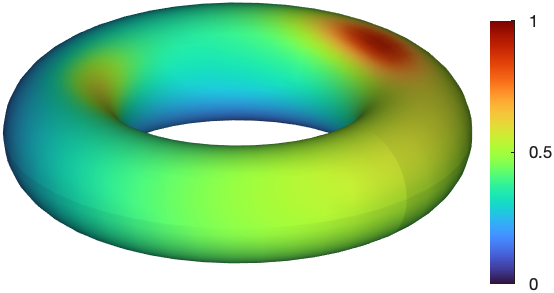}
        \end{minipage}\hfill
        \begin{minipage}{0.32\textwidth}
        \centering
        \textbf{(h)} Detail \(\mathbf w_{4}^{1}\)\\
        \includegraphics[width=\linewidth]{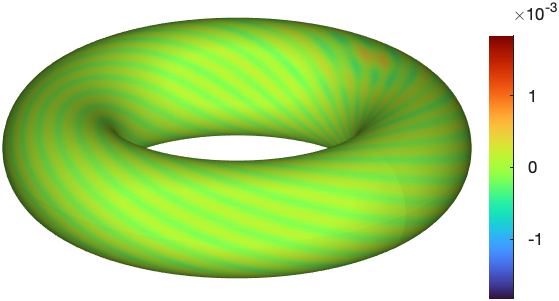}
        \end{minipage}\hfill
        \begin{minipage}{0.32\textwidth}
        \centering
        \textbf{(i)} Detail \(\mathbf w_{4}^{2}\)\\
        \includegraphics[width=\linewidth]{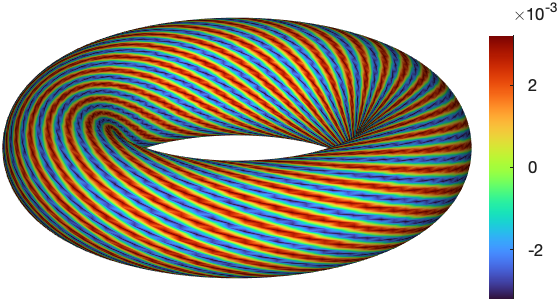}
        \end{minipage}
        \caption{Multiscale decomposition of a synthetic signal on
        jittered nested sampling sets on \(\mathbb T^2\).}
        \label{fig:torus-multiscale}
\end{figure}

\paragraph{Example 4: Denoising on an oscillatory signal.}
We next use the same jittered nested torus rule to test a simple
framelet thresholding procedure.  The levels, oversampling factor,
jitter parameter, and NFFT implementation are the same as in Example~3.
For this experiment the clean signal is the smoother version of the
periodic synthetic function, obtained by retaining the low-frequency
trigonometric modes and localized periodic Gaussian bumps and replacing
the high-frequency oscillatory terms by \(0.025\sin(2\pi(5u+3v))\).
We add Gaussian noise $g=f+\sigma\,\operatorname{std}(f)\,\epsilon$,
where the entries of \(\epsilon\) are independent standard normal
random variables.  After computing the finest weighted least-squares
projection, we apply the multilevel analysis transform to the recovered
finest spectral coefficient vector, use soft thresholding with threshold
$\eta_{\mathrm{thr}}
        =0.2\,\sigma\,\operatorname{std}(f)$,
and compare two reconstruction procedures: direct one-pass synthesis
and canonical coefficient-space CG reconstruction.  Errors in Table~\ref{tab:torus-denoising} are dominated by the
added noise and coefficient thresholding; the reconstruction tolerance
\(\tau_{\mathrm{rec}}=10^{-8}\) is several orders of magnitude smaller
and has no visible effect at the reported precision.

\begin{table}[htbp]
\centering
\small
\setlength{\tabcolsep}{5pt}
\begin{tabular}{ccccccc}
\toprule
\(\sigma\) 
& noisy rel. err. & one-pass rel. err. & canonical rel. err. & noisy SNR & one-pass SNR & CG SNR \\
\midrule
0.10  & \(4.5288\times10^{-2}\)
     & \(2.0487\times10^{-2}\) & \(1.7673\times10^{-2}\)
     & 26.880 & 33.770 & 35.054 \\
0.20 & \(9.0461\times10^{-2}\)
     & \(3.7380\times10^{-2}\) & \(3.5799\times10^{-2}\)
     & 20.871 & 28.547 & 28.922 \\
0.30  & \(1.3592\times10^{-1}\)
     & \(5.4773\times10^{-2}\) & \(5.3544\times10^{-2}\)
     & 17.334 & 25.229 & 25.426 \\
\bottomrule
\end{tabular}
\caption{Denoising results on the jittered torus rule.  SNR is reported
in dB.}
\label{tab:torus-denoising}
\end{table}

Figure~\ref{fig:torus-denoising} shows the denoising result for
\(\sigma=0.20\).  For this noise level, one-pass synthesis already
reduces the error of the noisy observation, but the canonical
coefficient-space CG reconstruction gives a smaller relative error and a
higher SNR.  This is consistent with the fact that one-pass synthesis
also contains the finite coefficient-frame discretization error, whereas the
canonical reconstruction compensates for it.

\begin{figure}[htbp]
        \centering
        \begin{minipage}{0.32\textwidth}
        \centering
        \textbf{(a)} Clean signal\\
        \includegraphics[width=\linewidth]{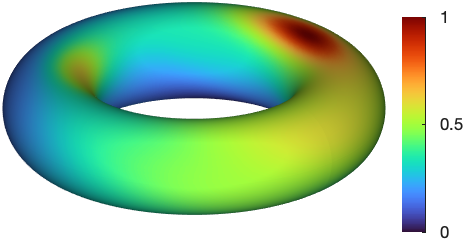}
        \end{minipage}\hfill
        \begin{minipage}{0.32\textwidth}
        \centering
        \textbf{(b)} One-pass denoising\\
        \includegraphics[width=\linewidth]{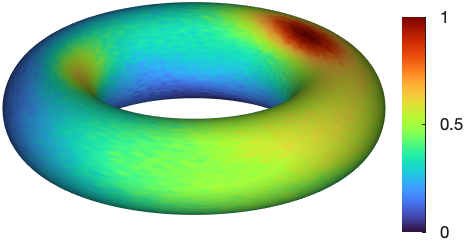}
        \end{minipage}\hfill
        \begin{minipage}{0.32\textwidth}
        \centering
        \textbf{(c)} CG denoising\\
        \includegraphics[width=\linewidth]{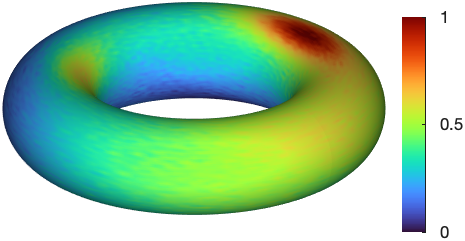}
        \end{minipage}

        \vspace{1ex}

        \begin{minipage}{0.32\textwidth}
        \centering
        \textbf{(d)} Noisy signal\\
        \includegraphics[width=\linewidth]{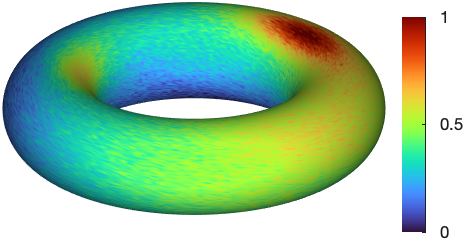}
        \end{minipage}\hfill
        \begin{minipage}{0.32\textwidth}
        \centering
        \textbf{(e)} One-pass error\\
        \includegraphics[width=\linewidth]{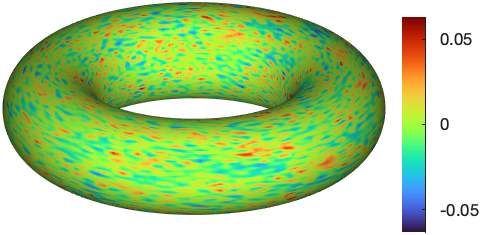}
        \end{minipage}\hfill
        \begin{minipage}{0.32\textwidth}
        \centering
        \textbf{(f)} CG error\\
        \includegraphics[width=\linewidth]{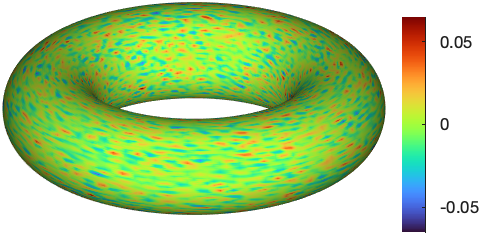}
        \end{minipage}
        \caption{Framelet denoising on the jittered torus rule for
        noise level \(\sigma=0.20\). }
        \label{fig:torus-denoising}
\end{figure}

\section{Concluding remarks}

Exact quadrature plays two logically distinct roles in many framelet constructions on manifolds.  It provides a stable discretization of bandlimited functions, but it also preserves the relevant inner products exactly, leading to tightness and one-pass perfect reconstruction.  The present work separates these two roles. Marcinkiewicz--Zygmund inequalities retain the stable discretization property but without requiring polynomial exactness.  This relaxation allows us to work with geometrically controlled scattered nodes and, in particular, to construct nested measures that retain all previous nodes as the resolution is refined.

Applying the nested Marcinkiewicz--Zygmund measures to continuous tight framelets produces nearly tight semi-discrete systems with explicitly controlled frame operators.  For a prescribed tolerance \(\eta\), the frame operator lies within \(\eta\) of the identity. Alternatively, nested oversampling reduces this deviation at the rate \(O(2^{-s})\). Therefore, relaxing polynomial exactness does not lead to an uncontrolled loss of tightness.  It makes scattered and progressively refined sampling possible while retaining energy preservation and
reconstruction stability to a quantifiable degree.

At the fully discrete level, the same near-tightness parameter controls both the error of one-pass synthesis and the conditioning of canonical CG reconstruction.  The resulting framework thus connects the geometry of nested scattered samples, Marcinkiewicz--Zygmund discretization, nearly tight
framelets, and practical multiscale transforms on manifolds.

\small
\bibliographystyle{siamplain}
\bibliography{myref}

\end{document}